\newcommand{\xkh}[1]{\left(#1\right)}
\newcommand{\dkh}[1]{\left\{#1\right\}}
\newcommand{\zkh}[1]{\left[#1\right]}
\newcommand{\nj}[1]{\left\langle {#1} \right\rangle}
\newcommand{\norm}[1]{\left\|{#1}\right\|_2}
\newcommand{\normpsi}[1]{\left\|{#1}\right\|_{\psi_1}}
\newcommand{\norms}[1]{\left\|{#1}\right\|}
\newcommand{\abs}[1]{\lvert#1\rvert}
\newcommand{\Abs}[1]{\left\lvert#1\right\rvert}
\newcommand{\E}{{\mathbb E}}
\newcommand{\PP}{{\mathbb P}}
\newcommand{\N}{{\mathcal N}}
\newcommand{\1}{{\mathds 1}}
\newcommand{\R}{{\mathbb R}}
\newcommand{\T}{\top}
\newcommand{\C}{{\mathbb C}}
\newcommand{\vx}{{\bm x}}
\newcommand{\vxs}{{\bm x}^\sharp}
\newcommand{\vy}{{\bm y}}
\newcommand{\vu}{{\bm u}}
\newcommand{\vv}{{\bm v}}
\newcommand{\vw}{{\bm w}}
\newcommand{\vz}{{\bm z}}
\newcommand{\vzl}{{\bm z}^{(l)}}
\newcommand{\Al}{A^{(l)}}
\newcommand{\Fl}{F^{(l)}}
\newcommand{\Hl}{H^{(l)}}
\newcommand{\hH}{{\widehat{H}}}
\newcommand{\tA}{{\widetilde{A}}}
\newcommand{\tAl}{{\widetilde{A}}^{(l)}}
\newcommand{\hHl}{{\widehat{H}^{(l)}}}
\newcommand{\tz}{{\widetilde{\bm{z}}}}
\newcommand{\tzl}{{{\widetilde{\bm{z}}}}^{(l)}}
\newcommand{\vb}{{\bm b}}
\newcommand{\va}{{\bm a}}
\newcommand{\dist}{{\rm dist}}
\newcommand{\mutual}{{\rm mutual}}
\newcommand{\RNum}[1]{\uppercase\expandafter{\romannumeral #1\relax}}
\newtheorem{definition}{Definition}[section]
\newtheorem{theorem}[definition]{Theorem}
\newtheorem{lemma}[definition]{Lemma}
\newtheorem{remark}[definition]{Remark}
\newtheorem{example}[definition]{Example}
\date{}
\begin{document}

\author{Meng Huang}
\address{School of Mathematical Sciences, Beihang University, Beijing, 100191, China} \email{menghuang@buaa.edu.cn}
\thanks{The work of the author was supported by NSFC grant (12201022) and the Fundamental Research Funds for the Central Universities.}

\baselineskip 18pt
\bibliographystyle{plain}
\title[Asymptotic quadratic convergence of the Gauss-Newton method]{Asymptotic quadratic convergence of the Gauss-Newton method for complex phase retrieval}
\maketitle

\begin{abstract}
In this paper, we introduce a Gauss-Newton method for solving the complex phase retrieval problem. In contrast to the real-valued setting, the Gauss-Newton matrix for complex-valued signals is rank-deficient and, thus, non-invertible. To address this, we utilize a Gauss-Newton step that moves orthogonally to certain trivial directions. We establish that this modified Gauss-Newton step has a closed-form solution, which corresponds precisely to the minimal-norm solution of the associated least squares problem.  Additionally, using the leave-one-out technique, we demonstrate that $m\ge O( n\log^3 n)$ independent complex Gaussian random measurements ensures that the entire trajectory of the Gauss-Newton iterations remains confined within a specific region of incoherence and contraction with high probability. This finding allows us to establish the asymptotic quadratic convergence rate of the Gauss-Newton method without the need of sample splitting.
\end{abstract}
\keywords{Keywords: Complex phase retrieval, Minimal-norm Gauss-Newton, Leave-one-out, Quadratic convergence}

\section{Introduction}

\subsection{Problem setup}
Let $\vxs \in \C^n$  be an arbitrary unknown vector. The problem  of recovering $\vxs$ from systems of quadratic equations 
\begin{equation} \label{eq:problesetup}
y_j=\Abs{\nj{\va_j,\vxs}}^2, \quad j=1,\ldots,m,
\end{equation}
is termed as {\em phase retrieval}. Here, $\va_j \in \C^n$ are known sampling vectors and $y_j \in \R$ are observed measurements. Such problems are ubiquitous in many areas of  physical sciences and engineering, such as X-ray crystallography \cite{harrison1993phase,millane1990phase}, diffraction imaging \cite{shechtman2015phase,chai2010array}, microscopy \cite{miao2008extending}, astronomy \cite{fienup1987phase}, optics and acoustics \cite{walther1963question, balan2010signal,balan2006signal} etc, where the optical sensors and detectors are incapable of recording the phase information.
In addition to its applications in the physical sciences, solving systems of quadratic equations \eqref{eq:problesetup} is also crucial in the field of machine learning. This includes the training of neural networks that employ quadratic activation functions \cite{soltanolkotabi,du2018power}. 


A natural approach for inverting the system of quadratic equations \eqref{eq:problesetup} is to solve the classical Wirtinger flow based model:
\begin{equation} \label{eq:mod1}
\min_{\vz \in \C^n} \quad f(\vz):=\frac1{m} \sum_{j=1}^m \xkh{\abs{\va_j^* \vz}^2- y_j }^2.
\end{equation}
It has been shown theoretically that $m\ge 4n-4$  generic measurements  suffice to guarantee that  the solution to \eqref{eq:mod1} is exact $\vxs$, up to a global phase \cite{conca2015algebraic,wangxu}.  In this paper, we employ the Gauss-Newton (GN) method to  solve \eqref{eq:mod1}. For the real-valued signals $\vxs$, the Gauss-Newton algorithm has been investigated in \cite{Gaoxu}. The authors demonstrate that that when $\va_j, j=1,\ldots,m$ are Gaussian random vectors, the Gauss-Newton method with resampling exhibits quadratic convergence with high probability, provided $m\ge O\xkh{\log\log(1/\epsilon) n\log n}$. Here, $\epsilon>0$ denotes the accuracy of the algorithm.  On one hand, the theoretical results presented in \cite{Gaoxu} require an infinite number of samples as $\epsilon\to 0$. On the other hand, the Gauss-Newton method with resampling necessitates partitioning the sampling vectors  $\va_j$  into a series of disjoint blocks of roughly equal size. This approach is impractical due to the lack of information about the number of blocks or their appropriate sizes. Additionally, when dealing with complex-valued signals $\vxs$, the Gauss-Newton matrix is rank-deficient and thus singular. To address this issue, the authors in \cite{Gaoxu} suggest using the minimal-norm Gauss-Newton method to solve \eqref{eq:mod1}.  The effectiveness of this approach has been verified through numerical experiments; however, there is no theoretical guarantee for its success.  Another way to address this issue is by utilizing the Levenberg-Marquardt (LM) method, an algorithm that resembles the Gauss-Newton method but adds a regularization term to the GN matrix. The authors in \cite{Mawen} demonstrate that, under the complex Gaussian setting, the LM method exhibits linear convergence for complex phase retrieval with high probability, provided that $m \ge O(n \log n)$.

To date, as far as we know, no algorithm exists that achieves a provable quadratic convergence rate for solving the phase retrieval problem without requiring sample splitting. Motivated by this, we aim to give a theoretical understanding about the convergence properties of the Gauss-Newton method for complex setting, and we are interested in the following question: {\em Can we establish the quadratic convergence rate of the Gauss-Newton method for phase retrieval with no need of sample splitting, especially for the complex-valued signals?}

\subsection{Related work}
 The phase retrieval problem, which seeks to recover $\vx$ from systems of quadratic equations \eqref{eq:problesetup}, has undergone intensive investigation in recent years. Over the past two decades, numerous algorithms with provable performance guarantees have been developed to address this problem. One prominent line of research involves convex relaxation, which first transforms the phase retrieval problem into a low-rank matrix recovery problem, followed by relaxation to a nuclear norm minimization problem. Theoretical analyses have demonstrated that the convex relaxation approach is effective provided the sampling complexity $m \ge Cn$, where $C$ is a sufficiently large constant \cite{phaselift,Phaseliftn,Waldspurger2015,Chenchi, kueng, Tcai}. However, the resultant semidefinite program tends to be computationally inefficient for large-scale problems. To mitigate this issue, another research trajectory optimizes a non-convex loss function within the natural parameter space. Netrapalli et al. proved that the alternating minimization method with resampling, based on spectral initialization, can achieve $\epsilon$ accuracy from $O(n \log n (\log^2 n+\log1/\epsilon))$ Gaussian random measurements \cite{AltMin}. Subsequently, Candès et al. demonstrated that the Wirtinger flow algorithm with spectral initialization achieves linear convergence with $O(n \log n)$ Gaussian random measurements, marking the first convergence guarantee for non-convex methods without resampling \cite{WF}. Chen and Candès further improved this result to $O(n)$ Gaussian random measurements by incorporating an adaptive truncation strategy \cite{TWF}. Ma et al. revisited the vanilla gradient descent method, utilizing leave-one-out arguments, and established local linear convergence with enhanced computational efficiency \cite{macong}. Additional non-convex phase retrieval algorithms with provable guarantees are discussed in \cite{bendory2017non,waldspurger2018phase,turstregion,cai2021solving,TAF,RWF,ChiYm,TTCai,caiLiu,ChenFan,Duchi,tan2019phase,huangsiam,ZYL}. For a comprehensive overview of recent theoretical, algorithmic, and application developments in phase retrieval, readers are referred to survey papers \cite{jaganathan2016phase,shechtman2015phase}. It is important to note that almost all algorithms for phase retrieval exhibit only linear convergence, with the notable exception of the Gauss-Newton method proposed in \cite{Gaoxu}, which demonstrates quadratic convergence. However, the findings in \cite{Gaoxu} not only necessitate sample splitting but also do not accommodate scenarios involving complex-valued signals.

Our analysis utilizes the leave-one-out argument, initially proposed for analyzing high-dimensional convex problems with random designs. This approach has been applied to a range of applications, including M-estimation \cite{Eikaroui1,Eikaroui2}, phase synchronization \cite{Zhong,Abbe}, and the maximum likelihood ratio test for logistic regression \cite{sur}, among others. Ma et al. adopted this method to examine non-convex optimization algorithms, successfully establishing the linear convergence of gradient descent for phase retrieval, matrix completion, and blind deconvolution \cite{macong}. Chen et al. utilized this argument to provide theoretical guarantees for the gradient descent of phase retrieval with random initialization \cite{ChenFan}. Additional applications of leave-one-out arguments that enhance computational efficiency and improve performance bounds for non-convex algorithms including \cite{macong,LiY,ChenFW,ChenNoisy,ChenJ}.

\subsection{Our Contributions}
To date, the Gauss-Newton method remains the sole phase retrieval algorithm with provable quadratic convergence. For real-valued signals, the theoretical guarantees are established in \cite{Gaoxu} with sample splitting.  For complex-valued signals, due to the singularity of the Gauss-Newton matrix, a Levenberg-Marquardt (LM) method, which resembles the Gauss-Newton method, is proposed for phase retrieval in \cite{Mawen}. However, this method only achieves linear convergence. Currently, there is no documented result concerning the quadratic convergence of the Gauss-Newton method without sample splitting. The goal of this paper is to provide the theoretical guarantees for it.  

To address the singularity of the Gauss-Newton matrix, inspired by the modified trust-region method \cite{turstregion}, we constrain each Gauss-Newton step $\delta_k$ to be geometrically orthogonal to the trivial direction $i\vz_k$. In this context, $\C^n$ is treated as $\R^{2n}$, and two complex vectors $\vz, \vw \in \C^n$ are considered orthogonal if $\Re(\vw^* \vz)=0$. Utilizing the properties of the Moore-Penrose pseudoinverse, we demonstrate that our modified algorithm corresponds precisely with the minimal-norm Gauss-Newton method. This method has been proven to exhibit semi-local convergence for general nonlinear least squares under center-Lipschitz conditions \cite{Haussler,Argyros,Pes}, however,  its application in phase retrieval remains unexplored.  Utilizing leave-one-out arguments, we establish that, with high probability, the minimal-norm Gauss-Newton method for phase retrieval achieves asymptotic quadratic convergence without requiring sample splitting, provided the number of measurements $m \ge C_0 n \log^3 m$ for some sufficiently large constant $C_0 > 0$.

We emphasize that, although the leave-one-out argument is commonly used for analyzing first-order algorithms, our paper is the first to apply it to a second-order algorithm. This is notably more challenging, as the leave-one-out argument for second-order algorithms typically involves perturbations of the inverse of a matrix.

\subsection{Notations}
\subsubsection{Basic notations}
Throughout the paper, we assume the $\va_j \in \C^n, j =1,\ldots, m$ are independent identically distributed (i.i.d.) complex Gaussian random vector, namely,  $\va_j \sim 1/\sqrt2\cdot \N(0,I_n)+i /\sqrt2\cdot \N(0,I_n)$.  We use $ \mathbb{S}_{\C}^{n-1}$ for the complex unit sphere in $\C^n$.  Let $\Re(\vz) \in \R^n$ and $\Im(\vz) \in \R^n$ denote the real and imaginary part of a complex vector $\vz\in \C^n$. We will often use the canonical identification of $\C^n$ and $\R^{2n}$, which assign $\vz \in \C^n$ to $[\Re(\vz); \Im(\vz)] \in \R^{2n}$. For this reason, we say two complex vectors $\vz, \vw \in \C^n$ are orthogonal if and only if $\Re(\vw^* \vz)=0$.  For a vector $\vz \in \C^n$, we use $\vz(k:l)$ to denote the vector consisting of entries from the $k^{th}$ to the $l^{th}$ position, where $1 \le k \le l \le n$.
 The notation $f(n)=O(g(n))$ or $f(n) \lesssim g(n)$ (resp. $f(n) \lesssim g(n)$) means there exists a constant $c_0>0$ such that $f(n) \le c_0 g(n)$ (resp. $f(n) \ge c_0 g(n)$). 
  For any matirx $A\in \C^{m\times n}$, we use $\norm{A}$ and $\norms{A}_F$ to denote its the spectral norm and the Frobenius norm, respectively. Moreover, we define 
\[
\norms{A}_{2,\Re} = \max_{\vx \in \R^n} \frac{\norm{A \vx}}{ \norm{\vx}}. 
\]
It is easy to check $\norms{A}_{2,\Re}=\norm{[\Re(A); \Im(A)]}$, where $\Re(A) \in \R^{m\times n} $ and $\Im(A)  \in \R^{m\times n} $ the real and imaginary part of the matrix $A$, respectively.

Obviously, for any $\vz$ if  $\vz$ is a solution to \eqref{eq:mod1} then $\vz e^{i\phi}$ is also a solution to it for any $\phi\in \R$. For this reason, we define the distance between $\vz$ and $\vxs$ as
\[
\dist(\vz,\vx)=\min_{\phi\in \R}\norm{\vz-\vxs e^{i\phi}}.
\]
For convenience, we also define the phase $\phi(\vz)$ as
\begin{equation} \label{eq:defphi}
\phi(\vz):=\mbox{argmin}_{\phi \in \R} \norm{\vz-\vxs e^{i\phi}}
\end{equation}
for any $\vz \in \C^n$. It is easy to verify that $\Im(\vz^* \vxs e^{i\phi(\vz)})=0$. 

\subsubsection{Wirtinger calculus}
Consider a real-valued function $f: \C^d\to \R$. According to the  Cauchy-Riemann conditions, $f$ is not complex differentiable unless it is constant.
However, when considering $f(\vz)$ as a function of  $(\vx,\vy)\in \R^{d}\times \R^d \cong \C^d$ where $\vx:=\Re(\vz), \vy:=\Im(\vz)$, it becomes feasible for $f(\vx,\vy)$ to be differentiable in the real sense. Direct differentiation of $f$ with respect to $\vx$ and $\vy$ can be complex and cumbersome. A more streamlined method involves using Wirtinger calculus, which simplifies the derivative expressions significantly, making them resemble to those with respect to $\vx$ and $\vy$ directly. Here, we provide a concise exposition of Wirtinger calculus (see also \cite{WF,turstregion}).

For any real-valued function $f(\vz)$, we can write it in the form of $f(\vz,\bar{\vz})$, where $\vz=\vx+i\vy$ and $\bar{\vz}:=\vx-i\vy$. Here, $\vx:=\Re(\vz), \vy:=\Im(\vz)$.
 If $f$ is differentiable as a function of $(\vx,\vy)\in \R^d\times \R^d$ then the Wirtinger gradient  is well-defined and can be denoted by
 \[
\nabla f(\vz)=\zkh{\frac{\partial f}{\partial \vz}, \frac{\partial f}{\partial \bar{\vz}}}^*,
\]
where
\[
\frac{\partial f}{\partial \vz}:=\frac{\partial f(\vz,\bar{\vz})}{\partial \vz} \Bigg|_{\bar{\vz}=\mbox{constant}}=\zkh{\frac{\partial f(\vz,\bar{\vz})}{\partial z_1 } ,\ldots, \frac{\partial f(\vz,\bar{\vz})}{\partial z_d }  }\Bigg|_{\bar{\vz}=\mbox{constant}}
\]
and
\[
\frac{\partial f}{\partial \bar{\vz}}:=\frac{\partial f(\vz,\bar{\vz})}{\partial \bar{\vz}} \Bigg|_{\vz=\mbox{constant}}=\zkh{\frac{\partial f(\vz,\bar{\vz})}{\partial \bar{z}_1 } ,\ldots, \frac{\partial f(\vz,\bar{\vz})}{\partial \bar{z}_d }  }\Bigg|_{\vz=\mbox{constant}}.
\]
Here, when applying the operator $\frac{\partial f}{\partial \vz}$, $\bar{\vz}$ is formally treated as a constant, and similar to the operator $\frac{\partial f}{\partial \bar{\vz}}$.  

\subsection{Moore-Penrose pseudoinverse}
Throughout the paper, we assume $A^{\dag}$ is the Moore-Penrose pseudoinverse of the matrix $A\in \C^{m\times n}$, which is  defined by means of the four “Moore-Penrose equations”
\[
AA^{\dag}A=A,\qquad A^{\dag}AA^{\dag}=A^{\dag},\qquad  (AA^{\dag})^*=AA^{\dag},\qquad (A^{\dag}A)^*=A^{\dag}A.
\]
Furthermore, if $A^* A$ is invertible then $A^\dag=(A^* A)^{-1}A^*$ and $\norm{A^\dag}^2=\norm{(A^* A)^{-1}}$.
For the definition and a comprehensive analysis of the properties of the Moore-Penrose inverse we refer the reader to \cite{Groetsch}.

\subsection{Organization}
The structure of this paper is as follows: In Section 2, we introduce a modified Gauss-Newton method for complex phase retrieval and establish that it is, in essence, the minimal-norm Gauss-Newton algorithm for general nonlinear least squares. Section 3 presents the main result of this paper, demonstrating that the minimal-norm Gauss-Newton method for phase retrieval achieves an asymptotic quadratic convergence rate. In Section 4, we evaluate the empirical performance of our algorithm through a series of numerical experiments. Section 5 provides an outline of the proof for the main result. Section 6 offers a brief discussion on potential future work. Appendices A and B contain the technical lemmas necessary for our analysis and the detailed proofs of technical results, respectively.

\section{The Gauss-Newton Method}
The program we consider is
\[
\min_{\vz \in \C^n} \quad f(\vz):=\frac1{m} \sum_{j=1}^m \xkh{\abs{\va_j^* \vz}^2- y_j }^2:=  \frac1{m} \sum_{j=1}^m \xkh{F_j(\vz)}^2,
\]
where $y_j=\abs{\va_j^* \vxs}^2$.  To solve this nonlinear least squares problem, we apply the well-known Gauss-Newton method. Under the Wirtinger calculus, the linearization of $F_j(\vz)$ at the point $\vz_k$ is
\[
F_j(\vz) \approx F_j(\vz_k) +  (\nabla F_j(\vz_k))^*  \left [ \begin{array}{l} \vz-\vz_k  \vspace{0.5em}\\ \overline{ \vz-\vz_k } \end{array}\right]. 
\]
Here, $\nabla F_j(\vz_k)$ is the Wirtinger gradient of $F_j(\vz)$ at the point $\vz_k$. The Gauss-Newton update rule is $\vz_{k+1}=\vz_k+\delta_k$, where $\delta_k$ is determined by solving the following linear least squares:
\begin{equation} \label{eq:minpro}
\min_{\delta \in \C^n} \quad \norm{ A(\vz_k) \left [ \begin{array}{l} \delta \vspace{0.5em}\\ \overline{ \delta } \end{array}\right]  + F(\vz_k) }^2,
\end{equation}
where
\begin{equation} \label{eq:Azk}
A(\vz_k):=\frac1{\sqrt{m}}   \left [ \begin{array}{cc} \vz_k^*\va_1\va_1^*,&  \vz_k^\T \bar{\va}_1\va_1^\T \\
\vdots & \vdots\\
\vz_k^*\va_m\va_m^*,&  \vz_k^\T \bar{\va}_m\va_m^\T \end{array} \right ] \in \C^{m\times 2n}
\end{equation}
and 
\begin{equation} \label{eq:Fzk}
 F(\vz_k)=\frac1{\sqrt{m}}  (\abs{\va_1^* \vz}^2- y_1,\ldots, \abs{\va_m^* \vz}^2- y_m  )^\T.
\end{equation}
Observe that $A(\vz_k) \left [ \begin{array}{l} \vz_k \vspace{0.5em}\\ -\overline{ \vz_k } \end{array}\right]=0$. This implies the matrix $A(\vz_k)$ is rank deficient, rendering the solution to \eqref{eq:minpro} non-unique. 
An  intriguing characteristic of $f(\vz)$ is that each point has a circle of equivalent points, all sharing the same function value. Therefore, to minimize the function value as effectively as possible, we constrain each update step to move in a direction orthogonal to the tangent vector $i\vz_k$ at point $\vz_k$. Specifically, we select  $\delta_k$ as the solution to
\begin{equation} \label{eq:minpro1}
\min_{\delta \in \C^n} \quad \norm{ A(\vz_k)  \left [ \begin{array}{l} \delta \vspace{0.5em}\\ \overline{ \delta } \end{array}\right]   + F(\vz_k) }^2, \qquad \mbox{s.t.} \qquad \Im(\delta^* \vz_k)=0.
\end{equation}
Notice that if we treat $\C^n$ as $\R^{2n}$ then $S(\vz_k):=\dkh{\vw\in \C^n:  \Im(\vw^* \vz_k)=0}$ forms a subspace of dimension $2n-1$ over $\R^{2n}$.  Take any matrix $U(\vz_k) \in \C^{n\times (2n-1)}$ whose columns forms an orthonormal basis for the subspace, i.e., $\Re(U_k^* U_l)=\delta_{k,l}$ for any columns $U_k$ and $U_l$. Then the problem \eqref{eq:minpro1} can be reformulated as 
\begin{equation} \label{eq:xik13}
\mbox{min}_{\xi \in \R^{2n-1}} \quad \norm{A(\vz_k) \left [ \begin{array}{l} U(\vz_k) \vspace{0.4em} \\ \overline{ U(\vz_k) } \end{array}\right]  \xi + F(\vz_k)  }^2.
\end{equation}
And $\xi_k$ is the solution to \eqref{eq:xik13} if and only if $\delta_k=U(\vz_k) \xi_k$ the solution to \eqref{eq:minpro1}.  

The following lemma shows that within the subspace $S(\vz_k):=\dkh{\vw\in \C^n:  \Im(\vw^* \vz_k)=0}$, the matrix $A(\vz_k)$ is invertible with high probability for all $\vz_k$ obeying $\max_{1\le j\le m} ~\abs{\va_j^* \vz_k} \le C_1 \sqrt{\log m} \norm{\vz_k}$. Here, $C_1>0$ is a universal constant.
 Consequently, the program \eqref{eq:minpro1} has a unique solution for all $\vz$ satisfying  the above ``near-independence'' property.  To facilitate our discussion, we define
\begin{equation} \label{eq:Hz}
H(\vz)=\left [ \begin{array}{l} U(\vz) \vspace{0.4em} \\ \overline{ U(\vz) } \end{array}\right]^* A(\vz)^* A(\vz) \left [ \begin{array}{l} U(\vz) \vspace{0.4em} \\ \overline{ U(\vz) } \end{array}\right] \in \R^{(2n-1)\times (2n-1)},
\end{equation}
where $U(\vz) \in \C^{n\times (2n-1)}$ represents the orthonormal matrix corresponding to the subspace $S(\vz):=\dkh{\vw\in \C^n:  \Im(\vw^* \vz)=0}$.

\begin{lemma} \label{le:Hzlowup}
Suppose that  the sample complexity obeys $m\ge C_0n\log^3 n$ for some sufficiently large constant $C_0>0$.  With probability at least $1-O(m^{-10})$
\[
 H(\vz) \succeq 1.9 \norm{\vz}^2 I_{2n-1}  \quad \mbox{and} \quad  A(\vz)^* A(\vz) \preceq 5\norm{\vz}^2 I_{2n} 
\]
holds simultaneously for all $\vz \in \C^n$ obeying 
\begin{equation*} 
\max_{1\le j\le m} ~\abs{\va_j^* \vz} \le C_1 \sqrt{\log m} \norm{\vz}.
\end{equation*}
Here, $C_1>0$ is a universal constant.
\end{lemma}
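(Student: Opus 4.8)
The plan is to reduce both inequalities to scalar concentration statements and then to run $\epsilon$-net arguments in which the heavy tails of $|\va_j^*\vz|^2$ are tamed by a one-sided truncation. Since $A(\vz)^*A(\vz)$ and $H(\vz)$ are homogeneous of degree two in $\vz$ under positive real scaling, and the near-independence hypothesis is scale invariant, I would assume $\norm{\vz}=1$ throughout. A short computation shows that the $j$-th coordinate of $A(\vz)[\delta;\overline{\delta}]$ equals $\tfrac{2}{\sqrt m}\,\Re(\overline{\va_j^*\vz}\,\va_j^*\delta)$, so that $[\delta;\overline{\delta}]^*A(\vz)^*A(\vz)[\delta;\overline{\delta}]=\tfrac4m\sum_j\bigl(\Re(\overline{\va_j^*\vz}\va_j^*\delta)\bigr)^2$; since $[U(\vz);\overline{U(\vz)}]$ applied to a real vector $\xi$ produces such a pair with $\delta=U(\vz)\xi$, $\norm{\delta}=\norm{\xi}$, and $\delta$ sweeping out $S(\vz)$, the bound $H(\vz)\succeq 1.9\,I_{2n-1}$ is equivalent to $\tfrac1m\sum_j(\Re(\overline{\va_j^*\vz}\va_j^*\delta))^2\ge 0.475$ for every unit $\delta\in S(\vz)$. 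For the other bound, the elementary inequality $|a+b|^2\le 2|a|^2+2|b|^2$ (together with the fact that conjugation preserves the unit ball) gives $A(\vz)^*A(\vz)\preceq 2\Psi(\vz)\,I_{2n}$, where $\Psi(\vz):=\sup_{\norm{u}=1}\tfrac1m\sum_j|\va_j^*\vz|^2|\va_j^*u|^2$, so it suffices to prove $\Psi(\vz)\le 2.5$ for every $\vz$ satisfying the near-independence hypothesis.

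I would treat the lower bound first, as it needs neither the near-independence hypothesis nor a dimension factor. Because $\delta\in S(\vz)$ forces $\vz^*\delta\in\R$, a direct fourth-moment computation (Wick's rule for complex Gaussians) yields $\E(\Re(\overline{\va^*\vz}\va^*\delta))^2=\tfrac12(1+3|\vz^*\delta|^2)\ge\tfrac12$. The summands are nonnegative with $\E|\va^*\vz|^4|\va^*\delta|^4=O(1)$, so a Chernoff bound applied to $-X_j$ gives, for each fixed admissible $(\vz,\delta)$, $\PP(\tfrac1m\sum_j(\cdots)^2<0.475)\le e^{-cm}$, with no dimension-dependent prefactor. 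I would then take a $1/\mathrm{poly}(n)$-net of the compact real manifold $\{(\vz,\delta):\norm{\vz}=\norm{\delta}=1,\ \Im(\delta^*\vz)=0\}$, of cardinality $e^{O(n\log n)}$, control the discretization through $|X_j(\vz,\delta)-X_j(\vz',\delta')|\lesssim(\max_j\norm{\va_j})^4(\norm{\vz-\vz'}+\norm{\delta-\delta'})$ on the event $\{\max_j\norm{\va_j}\lesssim\sqrt n\}$, and union bound; this succeeds already for $m\gtrsim n\log n$.

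The upper bound is the substantive part. Fix $T:=2\log\log m$ and split $|\va_j^*\vz|^2=(|\va_j^*\vz|^2\wedge T)+(|\va_j^*\vz|^2-T)_+$, so $\Psi(\vz)\le\Psi_{\mathrm{bulk}}(\vz)+\Psi_{\mathrm{tail}}(\vz)$. In $\Psi_{\mathrm{bulk}}$ the summands $(|\va_j^*\vz|^2\wedge T)|\va_j^*u|^2$ are subexponential with parameter $O(T)$ and mean $\le 1+|\vz^*u|^2\le 2$, so Bernstein's inequality plus a product $\epsilon$-net over $(\vz,u)$ — of size $e^{O(n\log n)}$, using that $u\mapsto\Psi_{\mathrm{bulk}}$ is the spectral norm of a PSD matrix (self-bounding under nets) — gives $\Psi_{\mathrm{bulk}}(\vz)\le 2.4$ uniformly once $m\gtrsim n(\log\log m)^2\log n$. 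For $\Psi_{\mathrm{tail}}$, on the near-independence event $(|\va_j^*\vz|^2-T)_+\le C_1^2\log m\cdot\1_{\{|\va_j^*\vz|^2>T\}}$, hence $\Psi_{\mathrm{tail}}(\vz)\le\tfrac{C_1^2\log m}{m}\bigl\|\sum_{j\in S(\vz)}\va_j\va_j^*\bigr\|$ with $S(\vz):=\{j:|\va_j^*\vz|^2>T\}$. I would then establish two auxiliary estimates holding with probability $1-O(m^{-10})$: (i) $\sup_{\vz}|S(\vz)|\le k:=\lceil 2e\,m e^{-T}\rceil$, via an $\epsilon$-net of $\mathbb S^{2n-1}$ at resolution $\asymp(\max_j\norm{\va_j})^{-2}$ (so that $|\va_j^*\vz|^2>T$ forces $|\va_j^*\vz'|^2>T-1$ at the nearest net point) followed by a Chernoff bound for the $\mathrm{Binomial}(m,e^{-(T-1)})$ exceedance count at each of the $e^{O(n\log n)}$ net points — this is precisely the step that requires $m e^{-T}\gtrsim n\log n$, i.e.\ $m\gtrsim n\log^3 n$; and (ii) $\max_{|S'|\le k}\bigl\|\sum_{j\in S'}\va_j\va_j^*\bigr\|\le C\,k\log(m/k)$, via a union bound over the $\binom mk$ index sets together with the sub-gamma tail of the top eigenvalue of a $k\times n$ complex Gaussian matrix. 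Combining these, $\Psi_{\mathrm{tail}}(\vz)\lesssim C_1^2\log m\cdot T e^{-T}\le 0.1$ for $T=2\log\log m$, whence $\Psi(\vz)\le 2.5$ and $A(\vz)^*A(\vz)\preceq 5\,I_{2n}$ on the near-independence event.

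The main obstacle, and the point that forces the $n\log^3 n$ scaling, is the tail term: one must simultaneously control the number of measurements whose projection onto an arbitrary direction is atypically large (which wants $T$ small, so that the net over all directions remains affordable) and the spectral norm of the resulting data-dependent, sparse subsum of rank-one matrices (which wants $T$ large, so that the subsum is sparse enough); the balance lands at $T\asymp\log\log m$ and thus at $m\asymp n\log^3 n$. Fixing the numerical constants so that $\Psi_{\mathrm{bulk}}$ stays below $2.4$, $\Psi_{\mathrm{tail}}$ below $0.1$, and the truncation and Chernoff slacks are absorbed is routine bookkeeping once the decomposition and the two auxiliary estimates are in place; throughout I would freely use the standard large-deviation events $\{\max_j\norm{\va_j}\lesssim\sqrt n\}$ and $\{\tfrac1m\sum_j\va_j\va_j^*\preceq 2I\}$, which hold with probability $1-O(m^{-10})$ under the hypothesis.
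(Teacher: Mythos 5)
Your proposal reaches the stated conclusion, but by a genuinely different route than the paper. The paper's proof is very short: it writes out $A(\vz)^*A(\vz)$ as a $2\times 2$ block matrix with blocks $\tfrac1m\sum_j|\va_j^*\vz|^2\va_j\va_j^*$ (Hermitian, $\approx\vz\vz^*+\norm{\vz}^2I$) and $\tfrac1m\sum_j(\va_j^*\vz)^2\va_j\va_j^\T$ (complex symmetric, $\approx 2\vz\vz^\T$), bounds the spectral norm by the sum of block norms, and invokes Lemma~\ref{le:contra} (a uniform, two-sided matrix concentration bound over all near-independent $\vz$, cited from \cite{ChenFan}) to absorb all the heavy-tail handling; the lower bound is the quadratic-form version of the same two-sided deviation bound. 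You instead reduce both inequalities to scalar quadratic-form statements (the identity $[\delta;\overline\delta]^*A^*A[\delta;\overline\delta]=\tfrac4m\sum_j(\Re(\overline{\va_j^*\vz}\va_j^*\delta))^2$, the bound $\norm{A^*A}\le 2\Psi(\vz)$, and the Wick computation $\E(\Re(\overline X Y))^2=\tfrac12(1+3(\vz^*\delta)^2)$ on $S(\vz)$) and then reprove the needed concentration from scratch. For the lower bound you correctly observe that nonnegativity makes a one-sided Chernoff bound plus a net suffice, with no truncation and hence no dependence on the near-independence event (a small sharpening that the paper's approach does not expose). For the upper bound you use a two-level truncation: a bulk at $T\asymp\log\log m$ handled by Bernstein, and a residual tail controlled by the near-independence bound together with a uniform count of exceedance indices and a union-bounded spectral estimate over all sparse sub-sums $\sum_{j\in S'}\va_j\va_j^*$. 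This is correct but more elaborate than necessary: the route taken in the paper's own Lemma~\ref{le:contra2} (and in \cite{ChenFan}) truncates once at the near-independence level $\asymp\sqrt{\log m}$, making the summands subexponential with parameter $O(\log m)$, so a single Bernstein-plus-net step directly gives the error $\sqrt{n\log^3 m/m}$ and hence $m\gtrsim n\log^3 n$. Your double truncation arrives at the same sample complexity through a more intricate counting-plus-sparse-spectrum argument. What each buys: the paper's proof is essentially a two-line application of a cited black box (which is itself only stated, not reproved, for the complex case); your proof is fully self-contained and, as a byproduct, supplies exactly the one-sided parts of that complex concentration lemma that Lemma~\ref{le:Hzlowup} actually needs.
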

\begin{proof}
See Section \ref{sec:Hzlowup}.
\end{proof}

The subsequent lemma demonstrates that the unique solution to \eqref{eq:minpro1}  is precisely $-A(\vz_k)^{\dag} F(\vz_k)$, where $A(\vz_k)^{\dag}$ represents the Moore-Penrose pseudoinverse of $A(\vz_k)$, as illustrated below.

\begin{lemma} \label{le:psu}
For any $\vz_k \in \C^n$, the solution to  \eqref{eq:minpro1} is 
\[
\delta_k=-A(\vz_k)^{\dag} F(\vz_k)(1:n).
\]
Here,  $A(\vz_k)^{\dag} F(\vz_k)(1:n)$ denotes the vector consisting of entries from the first to the $n^{th}$ position of $A(\vz_k)^{\dag} F(\vz_k)$.
\end{lemma}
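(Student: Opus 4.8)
The plan is to show that the vector ${\bm p}:=-A(\vz_k)^{\dag}F(\vz_k)(1:n)$ is a minimizer of the objective in \eqref{eq:minpro1} which also satisfies the linear constraint $\Im(\delta^*\vz_k)=0$, and that it is in fact the minimal-norm such minimizer; combined with the uniqueness noted right after Lemma~\ref{le:Hzlowup}, this identifies ${\bm p}$ as the solution $\delta_k$. The argument is pure linear algebra resting on the block decomposition $A(\vz_k)=[\,B,\ \bar B\,]$ with $B:=\tfrac1{\sqrt m}[\vz_k^*\va_1\va_1^*;\ \cdots;\ \vz_k^*\va_m\va_m^*]\in\C^{m\times n}$, on the fact that $F(\vz_k)\in\R^m$, and on the identity $A(\vz_k)[\vz_k;-\bar\vz_k]=0$ recorded in the text, i.e. $[\vz_k;-\bar\vz_k]\in\ker A(\vz_k)$.

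\textbf{Conjugate-pair structure.} The first point to establish is that $A(\vz_k)^{\dag}F(\vz_k)$ has the form $[-{\bm p};-\bar{\bm p}]$, which is what makes the truncation to the first $n$ coordinates meaningful. Using the Moore--Penrose identity $A^{\dag}=A^*(AA^*)^{\dag}$ together with $A(\vz_k)^*=[B^*;\overline{B^*}]$, one computes $A(\vz_k)A(\vz_k)^*=BB^*+\overline{BB^*}=2\Re(BB^*)$, a real symmetric PSD matrix; hence $(A(\vz_k)A(\vz_k)^*)^{\dag}$ is real and so is the vector ${\bm g}:=(A(\vz_k)A(\vz_k)^*)^{\dag}F(\vz_k)\in\R^m$. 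Therefore $A(\vz_k)^{\dag}F(\vz_k)=A(\vz_k)^*{\bm g}=[B^*{\bm g};\ \overline{B^*{\bm g}}]$, so $-A(\vz_k)^{\dag}F(\vz_k)=[{\bm p};\bar{\bm p}]$ with ${\bm p}=-B^*{\bm g}$. (Alternatively this is forced by the symmetry $[\vw_1;\vw_2]\mapsto[\overline{\vw_2};\overline{\vw_1}]$, which preserves both $\norm{A(\vz_k)\vw+F(\vz_k)}$ and $\norm{\vw}$ and hence fixes the unique minimal-norm least-squares solution $-A(\vz_k)^{\dag}F(\vz_k)$.)

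\textbf{Feasibility and optimality.} Since $-A(\vz_k)^{\dag}F(\vz_k)$ is the minimal-norm minimizer of $\norm{A(\vz_k)\vw+F(\vz_k)}^2$ over $\vw\in\C^{2n}$ and it already has the admissible shape $[{\bm p};\bar{\bm p}]=[\delta;\bar\delta]$, it minimizes the objective in \eqref{eq:minpro} over all $\delta\in\C^n$; and because $\norm{[\delta;\bar\delta]}^2=2\norm{\delta}^2$, it is the minimal-norm such $\delta$. For feasibility, $-A(\vz_k)^{\dag}F(\vz_k)\in\mathrm{range}(A(\vz_k)^*)=\ker(A(\vz_k))^{\perp}$ while $[\vz_k;-\bar\vz_k]\in\ker(A(\vz_k))$, so their Hermitian inner product vanishes, which unwinds to $\vz_k^*{\bm p}-\vz_k^{\T}\bar{\bm p}=2\i\,\Im(\vz_k^*{\bm p})=0$, i.e. $\Im({\bm p}^*\vz_k)=0$. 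Thus ${\bm p}$ satisfies the constraint of \eqref{eq:minpro1}; since it already minimizes the less-constrained problem \eqref{eq:minpro}, it is a minimizer of \eqref{eq:minpro1}, and (by the same norm comparison) the minimal-norm one.

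\textbf{Conclusion and the main obstacle.} To upgrade ``a minimizer'' to ``the solution'' I would invoke the remark following Lemma~\ref{le:Hzlowup}: on the near-independence region $H(\vz_k)\succ0$, so \eqref{eq:minpro1} has a unique solution, which must then equal ${\bm p}=-A(\vz_k)^{\dag}F(\vz_k)(1:n)$. I expect the only delicate step to be the conjugate-pair structure above -- in particular the verification that $A(\vz_k)A(\vz_k)^*$ is genuinely a real matrix (equivalently, that $-A(\vz_k)^{\dag}F(\vz_k)$ is invariant under swapping-and-conjugating its two blocks), since this is exactly what legitimizes passing between the $2n$-dimensional vector $A(\vz_k)^{\dag}F(\vz_k)$ and its first $n$ coordinates. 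The feasibility computation and the uniqueness step are then routine.
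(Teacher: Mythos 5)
Your proof is correct and follows the same overall logic as the paper's: show that $-A(\vz_k)^\dag F(\vz_k)(1:n)$ minimizes the unconstrained problem \eqref{eq:minpro} and that it happens to satisfy the constraint $\Im(\delta^*\vz_k)=0$, so it also solves the constrained problem \eqref{eq:minpro1}. The difference is one of self-containedness. The paper cites \cite{Gaoxu} twice --- Proposition~III.1 for the unconstrained optimality and Eq.~(III.15) for the conjugate-pair form $A(\vz_k)^\dag F(\vz_k)=[-{\bm p};-\bar{\bm p}]$ --- and then verifies $\Im(\vz_k^*{\bm p})=0$ by an explicit Moore--Penrose computation exploiting $A(\vz_k)[i\vz_k;-i\bar{\vz}_k]=0$, $(A^\dag A)^*=A^\dag A$, and $A^\dag A A^\dag=A^\dag$. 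You instead re-derive the conjugate structure from $A^\dag=A^*(AA^*)^\dag$ together with the observation that $A(\vz_k)A(\vz_k)^*=2\Re(BB^*)$ is real (a point the paper leaves implicit in its citation), and you phrase the paper's Moore--Penrose calculation abstractly as the statement that $-A^\dag F\in\mathrm{range}(A^\dag)=\ker(A)^\perp$ while $[\vz_k;-\bar{\vz}_k]\in\ker(A)$; these are the same identity at two levels of abstraction, and either suffices. Your concluding remark about upgrading ``a minimizer'' to ``the solution'' is also apt: the lemma is stated for arbitrary $\vz_k$ and should be read as producing the minimal-norm solution of \eqref{eq:minpro1}, with genuine uniqueness only under the near-independence hypothesis of Lemma~\ref{le:Hzlowup}, which the paper's proof does not address explicitly either.
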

\begin{proof}
As demonstrated in \cite[Proposition III.1]{Gaoxu},  $-A(\vz_k)^{\dag} F(\vz_k)(1:n)$ is a solution to  the unconstrained program  \eqref{eq:minpro}. 
Therefore, to prove this lemma, it suffices to show that $ \Im(\vz_k^*A(\vz_k)^{\dag} F(\vz_k)(1:n))=0$. To this end, observe that
\begin{eqnarray*}
2 \Im(\vz_k^*A(\vz_k)^{\dag} F(\vz_k)(1:n)) &=&  \nj{ \left ( \begin{array}{l} i \vz_k \vspace{0.5em}\\ -i \overline{ \vz_k } \end{array}\right),  \left ( \begin{array}{l} A(\vz_k)^{\dag} F(\vz_k)(1:n) \vspace{0.5em}\\  \overline{ A(\vz_k)^{\dag} F(\vz_k)(1:n)}  \end{array}\right)} \\
& \overset{\text{(i)}}{=}&  \nj{\xkh{ I - A(\vz_k)^{\dag} A(\vz_k)}  \left ( \begin{array}{l} i \vz_k \vspace{0.5em}\\ -i \overline{ \vz_k } \end{array}\right), A(\vz_k)^{\dag} F(\vz_k)} \\
&\overset{\text{(ii)}}{=}&  \left ( \begin{array}{l} i \vz_k \vspace{0.5em}\\ -i \overline{ \vz_k } \end{array}\right)^* \xkh{ I - A(\vz_k)^{\dag} A(\vz_k)} A(\vz_k)^{\dag} F(\vz_k) \\
&\overset{\text{(iii)}}{=}& 0.
\end{eqnarray*}
Here, (i) arises from the fact that $A(\vz_k)^{\dag} F(\vz_k)(n+1:2n)=\overline{ A(\vz_k)^{\dag} F(\vz_k)(1:n)}$  \cite[Eq. (III.15)]{Gaoxu} and the identity 
\[
A(\vz_k) \left ( \begin{array}{l} i \vz_k \vspace{0.5em}\\ -i \overline{ \vz_k } \end{array}\right)=0,
\]
and (ii), (iii) come from the definition of Moore-Penrose pseudoinverse that $(A(\vz_k)^{\dag} A(\vz_k))^*=A(\vz_k)^{\dag} A(\vz_k)$ and $A(\vz_k)^{\dag} A(\vz_k) A(\vz_k)^{\dag} =  A(\vz_k)^{\dag}$. This concludes the proof.
\end{proof}

Equipped with Lemma \ref{le:psu}, our minimal-norm Gauss-Newton method for phase retrieval is then a combination of spectral initialization and the Gauss-Newton step, as detailed in in Algorithm \ref{al:1}.
To improve the efficiency of the Gauss-Newton method in practice, $A(\vz_k)^{\dag} F(\vz_k)$ can be solved inexactly after reaching certain criterion. More specifically, according to the properties of the Moore-Penrose pseudoinverse, $A(\vz_k)^{\dag} F(\vz_k)$ is the solution to 
\[
\min_{\bm{d}\in \C^{2n}} \quad \norm{\bm{d}} \quad  \mbox{s.t.}\quad   A^*(\vz_k)  A(\vz_k) \bm{d}= A^*(\vz_k)  F(\vz_k) .
\]
Several algorithms can solve it efficiently, such as LSQR \cite{paige} and CRAIG \cite{saunders}. In our numerical experiments, we solve it by LSQR and limit its maximum iteration number to be $10$.

\begin{algorithm}[H]
\caption{Minimal-norm Gauss-Newton for complex phase retrieval}
\label{al:1}
\begin{algorithmic}[H]
\Require
Measurement vectors: $\va_j \in \C^n, j=1,\ldots,m $; Observations: $y_j \in \C,  j=1,\ldots,m$; the maximum number of iterations $T$.   \\
\textbf{Spectral initialization:} Let $\lambda_1(Y)$ and $\tz_0 \in \C^n$ be the leading eigenvalue and eigenvector of 
\[
Y=\frac1m \sum_{j=1}^m y_j\va_j\va_j^*,
\]
respectively. And set $\vz_0=\sqrt{\lambda_1(Y)/2}\cdot \tz_0$. \\
\textbf{Gauss-Newton updates:}  for $k=0,1,\ldots, T-1$ 
\begin{equation} \label{eq:al1zk}
\vz_{k+1}=\vz_k- A(\vz_k)^{\dag} F(\vz_k)(1:n),
\end{equation}
where $A(\vz_k)$ and $F(\vz_k)$ are given in \eqref{eq:Azk} and \eqref{eq:Fzk}, respectively.
\Ensure
The vector $ \vz_T $.
\end{algorithmic}
\end{algorithm}

\section{Main results}

In this section, we demonstrate that the minimal-norm Gauss-Newton method for phase retrieval achieves asymptotic quadratic convergence rate. 
To this end,  we require certain Lipschitz conditions. Specifically,  the matrix $A(z)$, defined in \eqref{eq:Azk}, should exhibit local Lipschitz continuity such that $\norm{A(\vx) - A(\vy)} \le K\norm{\vx-\vy}$ for some constant $K > 0$. However,  the heavy-tailed behavior of the fourth powers of Gaussian random variables can cause the constant $K$ to reach values as large as $O(\sqrt{n})$. This necessitates that the initial vector $\vz_0 \in \C^n$ must satisfy $\mbox{dist}(\vz_0, \vxs) \lesssim 1/\sqrt{n}$, which is a rather stringent requirement. To address this, \cite{Gaoxu} employs sample splitting. However, this approach is not sample-efficient and is not typically implemented in practice.

Instead of sample splitting, we introduce the region 
\[
\mathcal{R}_0:=\dkh{\vz\in \C^n:  \dist(\vz_{0}, \vxs) \le \delta \|\vxs\|_2 \quad \mbox{and} \quad 
\max_{1\le j\le m} ~\abs{\va_j^* (\vz- \vxs e^{i\phi(\vz)})} \lesssim  \sqrt{\log m} },
\]
where $0<\delta<1$ is a universal constant. For any $\vz \in  \mathcal{R}_0$, $\vz$ is close to $\vxs$ and exhibits ``near-independence'' to all sensing vectors $\va_j$.  We term $\mathcal{R}_0$ the {\em Region of Incoherence and Contraction} (RIC).  The RIC has the advantageous property that the matrix $A(z)$, defined in \eqref{eq:Azk}, exhibits near-Lipschitz continuity for all $\vx,\vy \in \mathcal{R}_0$ as follows (see the Lemma \ref{le:conun1}):
\[
\norm{A(\vx) - A(\vy)} \le 4\norm{\vx-\vy} + \epsilon \qquad \mbox{for all} \quad \vx,\vy \in \mathcal{R}_0,
\]
where $\epsilon>0$ is a sufficiently small constant.   This will allows us to establish asymptotic quadratic convergence of the Gauss-Newton method.
A key question remains: how can we ensure that the trajectory of the Gauss-Newton update rule never leaves the RIC? To address this, we employ leave-one-out arguments to construct auxiliary sequences that use all but one sample.  This approach leverages the statistical independence between the auxiliary sequences and the corresponding sensing vector that has been left out, thereby establishing the "near-independence" property. 
By utilizing this technique, we have the result:

\begin{theorem} \label{th:main}
Let $\vxs \in \C^n$ be a fixed vector.  Suppose that $\va_j \in \C^n, j=1,\ldots,m,$ are i.i.d. complex Gaussian random vectors. For any sufficiently small constant $\epsilon_0>0$,  with probability at least $1- O(m^{-8})-O(me^{-1.5n})$, the initial estimate $\vz_0$ given in Algorithm \ref{al:1}  obeys
\begin{equation} \label{eq:main0}
 \dist(\vz_{0}, \vxs) \le \delta \|\vxs\|_2,
\end{equation}
and  the iterates $\vz_k$ given in \eqref{eq:al1zk} obey
\begin{subequations} 
\label{eq:main11}
\begin{gather} 
\dist(\vz_{k+1}, \vxs) \le \frac2{\norm{\vxs}} \dist^2(\vz_{k}, \vxs) +\epsilon_0 \dist(\vz_{k}, \vxs), \label{eq:main1} \\
\max_{1\le j\le m} ~\abs{\va_j^* (\vz_k- \vxs e^{i\phi(\vz_k)})} \le C_1 \sqrt{\log m} \|\vxs\|_2 \label{eq:main2},    
\end{gather}
\end{subequations}
for all $k\ge 0$, provided  $m\ge C_0 \epsilon_0^{-4} n\log^3 n$  for some large constant $C_0 >0$.  Here, $0< \delta \le  0.01$ is a universal constant.
\end{theorem}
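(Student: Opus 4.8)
Throughout we may assume $\norm{\vxs}=1$ after rescaling, and we write $\vh_k:=\vz_k-\vxs e^{i\phi(\vz_k)}$, so that $\dist(\vz_k,\vxs)=\norm{\vh_k}$ and, since $\Im(\vz_k^*\vxs e^{i\phi(\vz_k)})=0$, the error $\vh_k$ lies in the subspace $S(\vz_k)$. The starting point is that each residual $F_j(\vz)=|\va_j^*\vz|^2-y_j$ is quadratic, so the Wirtinger expansion of $F(\cdot)$ around $\vz_k$ terminates:
\[
F(\vz_k+\delta)=F(\vz_k)+A(\vz_k)\begin{bmatrix}\delta\\\overline{\delta}\end{bmatrix}+\tfrac1{\sqrt m}\big(|\va_j^*\delta|^2\big)_{j=1}^{m}.
\]
Taking $\delta=-\vh_k$ and using $F(\vxs e^{i\phi(\vz_k)})=0$ gives $F(\vz_k)=A(\vz_k)\left[\begin{smallmatrix}\vh_k\\\overline{\vh_k}\end{smallmatrix}\right]-\vb_k$, with $\vb_k:=\tfrac1{\sqrt m}(|\va_1^*\vh_k|^2,\dots,|\va_m^*\vh_k|^2)^{\T}$. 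Writing $\vh_k=U(\vz_k)\eta_k$ with $\eta_k\in\R^{2n-1}$ and $B(\vz):=A(\vz)\left[\begin{smallmatrix}U(\vz)\\\overline{U(\vz)}\end{smallmatrix}\right]$ (so $H(\vz)=B(\vz)^*B(\vz)$ is real), the reparametrised least squares \eqref{eq:xik13} has solution $\xi_k=-H(\vz_k)^{-1}\Re(B(\vz_k)^*F(\vz_k))=-\eta_k+H(\vz_k)^{-1}\Re(B(\vz_k)^*\vb_k)$, so the term linear in $\vh_k$ cancels and, with $\delta_k=U(\vz_k)\xi_k$,
\[
\vz_{k+1}-\vxs e^{i\phi(\vz_k)}=U(\vz_k)\,H(\vz_k)^{-1}\,\Re\!\big(B(\vz_k)^*\vb_k\big)=:\Phi(\vz_k),
\]
where $\Phi(\vz):=U(\vz)H(\vz)^{-1}\Re(B(\vz)^*\vb(\vz))$ and $\vb(\vz):=\tfrac1{\sqrt m}(|\va_j^*(\vz-\vxs e^{i\phi(\vz)})|^2)_j$; $\Phi^{(l)}$ will denote the analogous map built from the loss with the $l$-th sample removed.

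From this exact identity, $\dist(\vz_{k+1},\vxs)\le\norm{H(\vz_k)^{-1}}\,\norm{B(\vz_k)}\,\norm{\vb_k}$, and on the RIC Lemma~\ref{le:Hzlowup} gives $\norm{H(\vz_k)^{-1}}\le(1.9\norm{\vz_k}^2)^{-1}$ and $\norm{B(\vz_k)}\le\sqrt{10}\,\norm{\vz_k}$ (in fact on $S(\vz_k)$ one has $H(\vz)\approx 2\norm{\vz}^2 I$, which is what produces the right leading constant). Everything thus reduces to $\norm{\vb_k}^2=\tfrac1m\sum_j|\va_j^*\vh_k|^4$. For a deterministic $\vh$ this concentrates around $2\norm{\vh}^4$, but $\vh_k$ depends on all the $\va_j$; the incoherence clause of the RIC is precisely what excludes the alignments $\vh_k\parallel\va_j$ that would inflate the sum. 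I would truncate the summands at the incoherence level, apply a Bernstein inequality together with a covering argument over $\{\vh:\norm{\vh}\le\delta\}$, and use $m\ge C_0\epsilon_0^{-4}n\log^3 n$ to bound the deviation by $O(\epsilon_0)$ relative to $\norm{\vh_k}^2$; this yields $\norm{\vb_k}\le(\sqrt2+O(\epsilon_0))\norm{\vh_k}^2$ and, after tracking constants and using $\norm{\vz_k}\in[1-\delta,1+\delta]$, the estimate \eqref{eq:main1} (the factor $2$ there comfortably absorbs the sharper $1+o(1)$ these bounds actually give). The initialisation bound \eqref{eq:main0} is routine: $Y=\tfrac1m\sum_j y_j\va_j\va_j^*$ concentrates around $I+\vxs\vxs^*$, whose leading eigenpair is $(2,\vxs)$, and a Davis--Kahan/perturbation argument gives $\dist(\vz_0,\vxs)\le\delta$; the exceptional probability $O(m^{-8})+O(me^{-1.5n})$ comes from this concentration together with union bounds on $\max_j\norm{\va_j}$ and $\max_j|\va_j^*\vxs|$.

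The main work — and the main obstacle — is showing the trajectory never leaves the RIC, i.e.\ proving \eqref{eq:main2} for every $k$. Since $\vh_k$ is correlated with every $\va_j$, this cannot be done directly; instead I would introduce, for each $l$, the leave-one-out iterates $\{\vz_k^{(l)}\}$ produced by Algorithm~\ref{al:1} run on $f^{(l)}(\vz)=\tfrac1m\sum_{j\neq l}(|\va_j^*\vz|^2-y_j)^2$, so that $\vz_k^{(l)}$ is independent of $\va_l$, and run a joint induction on $k$ with hypotheses (i) $\dist(\vz_k,\vxs)\le\delta$; (ii) the incoherence \eqref{eq:main2}; and (iii) a leave-one-out proximity bound on $\max_l\dist(\vz_k^{(l)},\vz_k)$, of order $\sqrt{n\log^3 n/m}$ up to the scale $\dist(\vz_k,\vxs)$. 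Clause (i) propagates from \eqref{eq:main1} and $\delta\le0.01$. For (ii) one writes $|\va_l^*\vh_k|\le|\va_l^*\vh_k^{(l)}|+\norm{\va_l}\,\dist(\vz_k^{(l)},\vz_k)$: the first term is $\lesssim\sqrt{\log m}$ by the independence built into (iii) plus a scalar Gaussian tail bound (union over $l$), and the second is controlled by (iii) and $\norm{\va_l}\lesssim\sqrt n$. The crux is propagating (iii): applying the identity for $\Phi$ above to both $\vz_{k+1}$ and $\vz_{k+1}^{(l)}$ one must control $\Phi(\vz_k)-\Phi^{(l)}(\vz_k)$ (full versus leave-one-out) and $\Phi^{(l)}(\vz_k)-\Phi^{(l)}(\vz_k^{(l)})$. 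The former needs perturbation estimates for the \emph{inverse} Gauss--Newton matrix under the rank-one change $H(\vz)-H^{(l)}(\vz)$ caused by removing $\va_l$, together with the attendant changes in $A(\cdot),U(\cdot),\vb(\cdot)$; the latter needs the near-Lipschitz estimate $\norm{A(\vx)-A(\vy)}\le4\norm{\vx-\vy}+\epsilon$ of Lemma~\ref{le:conun1} and the fact that $\Phi$ is quadratically small near $\vxs$, so that $\norm{\Phi^{(l)}(\vz_k)-\Phi^{(l)}(\vz_k^{(l)})}\lesssim\norm{\vh_k}\,\dist(\vz_k^{(l)},\vz_k)$ is a genuine contraction. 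Carrying a perturbation of $H(\cdot)^{-1}$ (equivalently of $A(\cdot)^{\dag}$) through the leave-one-out recursion is the essential new difficulty relative to the first-order analyses in \cite{macong,ChenFan}, and is what forces the polylogarithmic overhead in the sample complexity. Collecting all the good events — Lemma~\ref{le:Hzlowup}, the net concentration for $\vb_k$, the Gaussian tails for the leave-one-out terms, and the spectral initialisation — by a union bound gives the claimed probability.
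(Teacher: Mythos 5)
Your overall architecture matches the paper's: an exact error recursion obtained from the quadratic structure of $F$ (the paper phrases this through the fundamental theorem of calculus plus a near-Lipschitz estimate on $A(\cdot)$, but this is algebraically identical to your $F(\vz_k)=A(\vz_k)[\vh_k;\overline{\vh_k}]-\vb_k$ since $A$ is linear in $\vz$), plus leave-one-out iterates to propagate incoherence, plus Wedin's $\sin\Theta$ for the initialization, plus pseudo-inverse/rank-one perturbation theory for the leave-one-out recursion. Two points, however, need to be fixed.

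First, the claimed estimate $\norm{\vb_k}\le(\sqrt2+O(\epsilon_0))\norm{\vh_k}^2$ is not what the concentration gives, and the discrepancy matters. The quantity $\tfrac1m\sum_j|\va_j^*\vh_k|^4=\vh_k^*\bigl(\tfrac1m\sum_j|\va_j^*\vh_k|^2\va_j\va_j^*\bigr)\vh_k$ concentrates with an \emph{absolute} error: on the RIC one gets $\tfrac1m\sum_j|\va_j^*\vh_k|^4\le 2\norm{\vh_k}^4+O(\epsilon_0^2)\norm{\vh_k}^2$, hence $\norm{\vb_k}\le\sqrt2\,\norm{\vh_k}^2+O(\epsilon_0)\norm{\vh_k}$. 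The additive linear piece cannot be absorbed into $\norm{\vh_k}^2$ because it dominates once $\norm{\vh_k}\ll\epsilon_0$; this is precisely the origin of the $\epsilon_0\,\dist(\vz_k,\vxs)$ term in \eqref{eq:main1} and of the word ``asymptotic'' in the title. Relatedly, the naive operator-norm bound $\norm{H^{-1}}\norm{B}\norm{\vb_k}\lesssim\frac{\sqrt{10}}{1.9}\sqrt2\norm{\vh_k}^2\approx 2.4\norm{\vh_k}^2$ overshoots the stated constant $2$; the sharpening you gesture at does exist, but it is $\norm{H^{-1}B^*}=\lambda_{\min}(H)^{-1/2}\le(1.9)^{-1/2}/\norm{\vz_k}$, not ``$H\approx2\norm{\vz}^2 I$'' — Lemma~\ref{le:Hzlowup} only gives $1.9\norm{\vz}^2 I\preceq H\preceq 10\norm{\vz}^2 I$, and the top eigenvalue of $H$ is genuinely close to $8$--$10$ when restricted to directions aligned with $\vz$.

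Second, a leave-one-out induction that spends $O(m^{-c})$ failure probability per step cannot, by a union bound, cover ``all $k\ge0$''. The paper handles this with a two-stage device that your sketch omits: it runs the leave-one-out argument only for $0\le k\le T_0:=n$, paying $O(n\cdot m^{-9})=O(m^{-8})$ in probability; by then $\dist(\vz_{T_0+1},\vxs)\le 1/n^2$ (linear contraction from \eqref{eq:main1} with $\delta\le 0.01$), and for all subsequent $k$ the incoherence \eqref{eq:main2} is verified deterministically via Cauchy--Schwarz and $\max_j\norm{\va_j}\le\sqrt{6n}$, with no further appeal to randomness or leave-one-out. Without this bootstrap the induction does not close for unbounded $k$. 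Aside from these two points, your decomposition of the leave-one-out recursion into ``remove-one-sample'' and ``different-base-point'' perturbations of $\Phi$, and your identification of the pseudo-inverse perturbation (Wedin/Sherman--Morrison) as the new second-order difficulty, is the paper's Lemma~\ref{le:diszkl}.
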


\begin{remark}
The precise expression for the constant $\epsilon_0$ in Theorem \ref{th:main} is $\epsilon_0=c_1 \sqrt[4]{\frac{n\log^3m}{m}}$ for some universal constant $c_1>0$. Consequently, $\epsilon_0 \to 0$ as $m\to \infty$, leading to \eqref{eq:main1} becoming $\dist(\vz_{k+1}, \vxs) \le \frac2{\norm{\vxs}}  \dist^2(\vz_{k}, \vxs)$. Therefore,  our Gauss-Newton method exhibits an asymptotic quadratic convergence rate.
\end{remark}

\begin{remark}
Theorem \ref{th:main} demonstrates that Algorithm \ref{al:1} succeeds with high probability when the sampling complexity satisfies $m\gtrsim n\log^3 n$.  Notably, the minimal sample size should obey the condition  $m\ge \xkh{4+o(1)}n$ \cite{conca2015algebraic,Heinosaari}. Thus, our sample complexity is near-optimal up to a logarithmic factor.
\end{remark}

\begin{remark}
The combination of \eqref{eq:main0} and \eqref{eq:main11} implies that the trajectory of the Gauss-Newton update rule always remains within the region of incoherence and contraction. This demonstrates the weak statistical dependency between the iterates $\dkh{\vz_k}$ and the design vectors $\dkh{\va_j}$.
\end{remark}

\section{Numerical Experiments}
In this section, we present several numerical experiments to validate the effectiveness and robustness of the Gauss-Newton method in comparison with WF \cite{WF}, TWF \cite{TWF}, and TAF \cite{TAF}. These methods are selected due to their widespread applications and high efficiency in solving the phase retrieval problem. We employ the LSQR method to compute the Moore-Penrose pseudoinverse in Algorithm \ref{al:1}, limiting its maximum number of iterations to 10 for signal recovery and 5 for natural image recovery. All experiments are conducted on a laptop with a 2.4 GHz Intel Core i7 Processor, 8 GB 2133 MHz LPDDR3 memory, and Matlab R2024a.

\subsection{Recovery of signals with noiseless measurements}
During each experiment, the target vector  $\vxs \in \C^n$ is chosen randomly from the standard complex Gaussian distribution.  The measurement vectors $ \va_j, \,j=1,\ldots,m$ are generated either from the standard complex Gaussian distribution or according to the coded diffraction patterns (CDP) model. For the CDP model, we utilize octanary pattern masks as described in \cite{WF}. The implementations for WF, TWF, and TAF were obtained from the authors' websites, using the recommended parameters.

\begin{example}
We first assess the relative error of the Gauss-Newton method in comparison to WF, TWF, and TAF for both complex Gaussian and CDP cases. 
 We set $n=1000$.  For the complex Gaussian case, we choose $m=5n$.  For the  CDP case, we select the number of masks as $L=6$.   The results are presented in Figure \ref{figure:relative_error}. It is evident that the Gauss-Newton method converges significantly faster than the state-of-the-art algorithms, requiring only about  $10$ iterations to achieve a relative error of $10^{-15}$.
\end{example}

\begin{example}
In this example, we evaluate the empirical success rate of the Gauss-Newton method with respect to the number of measurements.  We set $n=1000$. For the complex Gaussian case,  we vary $m$ within the range $[2n,6n]$.  For the  CDP case,  we set the number of masks $m/n=L$  from $2$ to $6$.  For each $m$, we run $100$ times trials to calculate the success rate.  A trial is considered successful if the algorithm returns a vector $\vz_T$  with a small relative error, specifically when  $\mbox{dist}(\vz_{T}-\vx)/\norm{\vx} \le 10^{-5}$.  The results are plotted in \ref{figure:suc}.  It can be observed that the empirical success rate of the Gauss-Newton method is comparable to that of TAF and TWF, and even slightly better than WF.
\end{example}

\begin{figure}[H]
\centering
\subfigure[]{
     \includegraphics[width=0.45\textwidth]{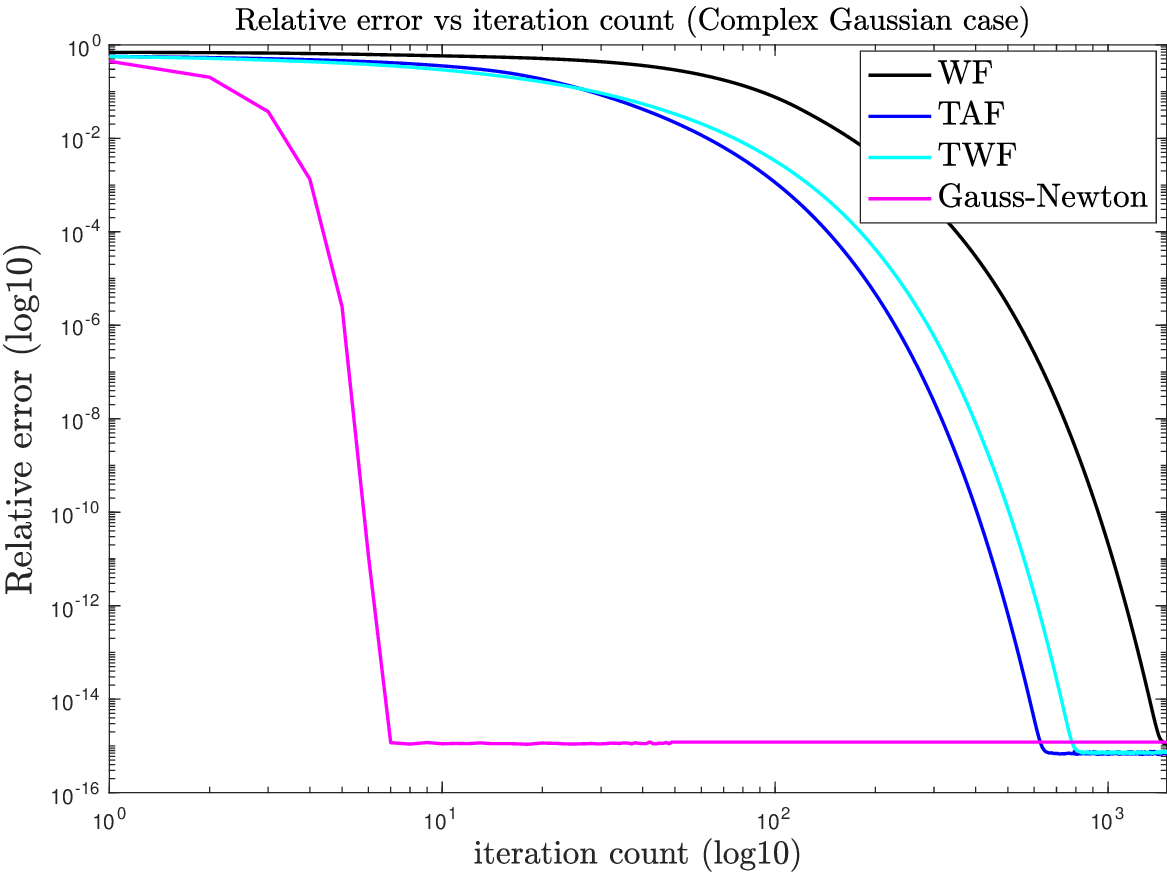}}
\subfigure[]{
     \includegraphics[width=0.45\textwidth]{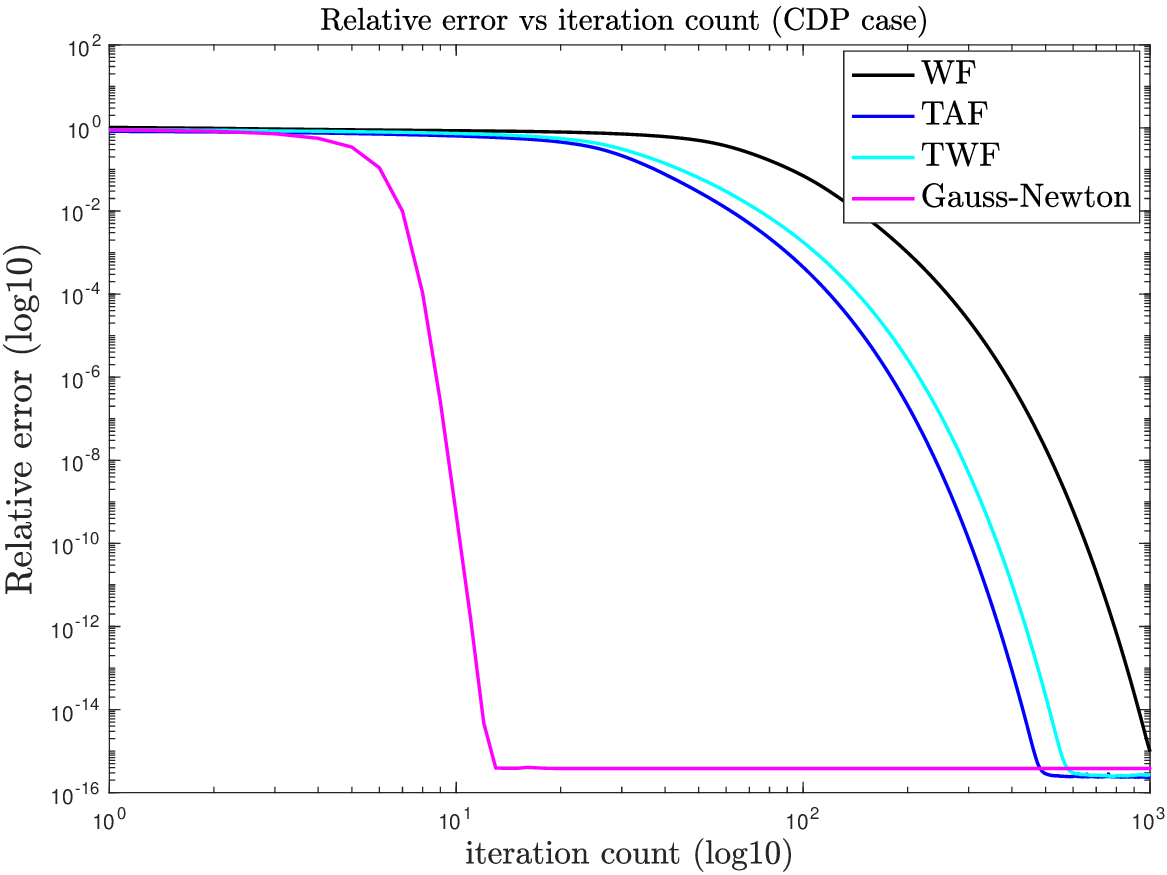}}
\caption{ Relative error versus iteration count for Gauss-Newton, WF, TWF, and TAF methods.: (a) The complex Gaussian case; (b) The CDP case.}
\label{figure:relative_error}
\end{figure}

\begin{figure}[H]
\centering
    \subfigure[]{
     \includegraphics[width=0.45\textwidth]{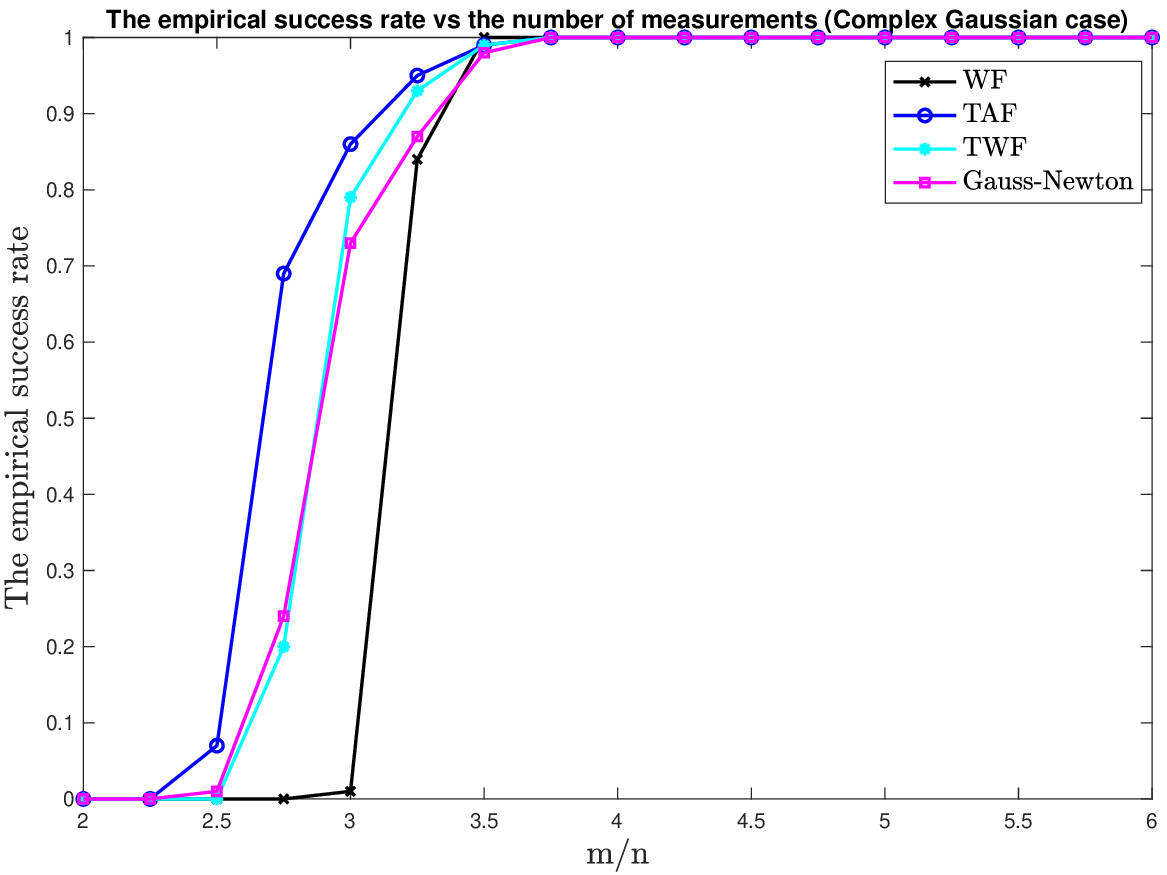}}
\subfigure[]{
     \includegraphics[width=0.45\textwidth]{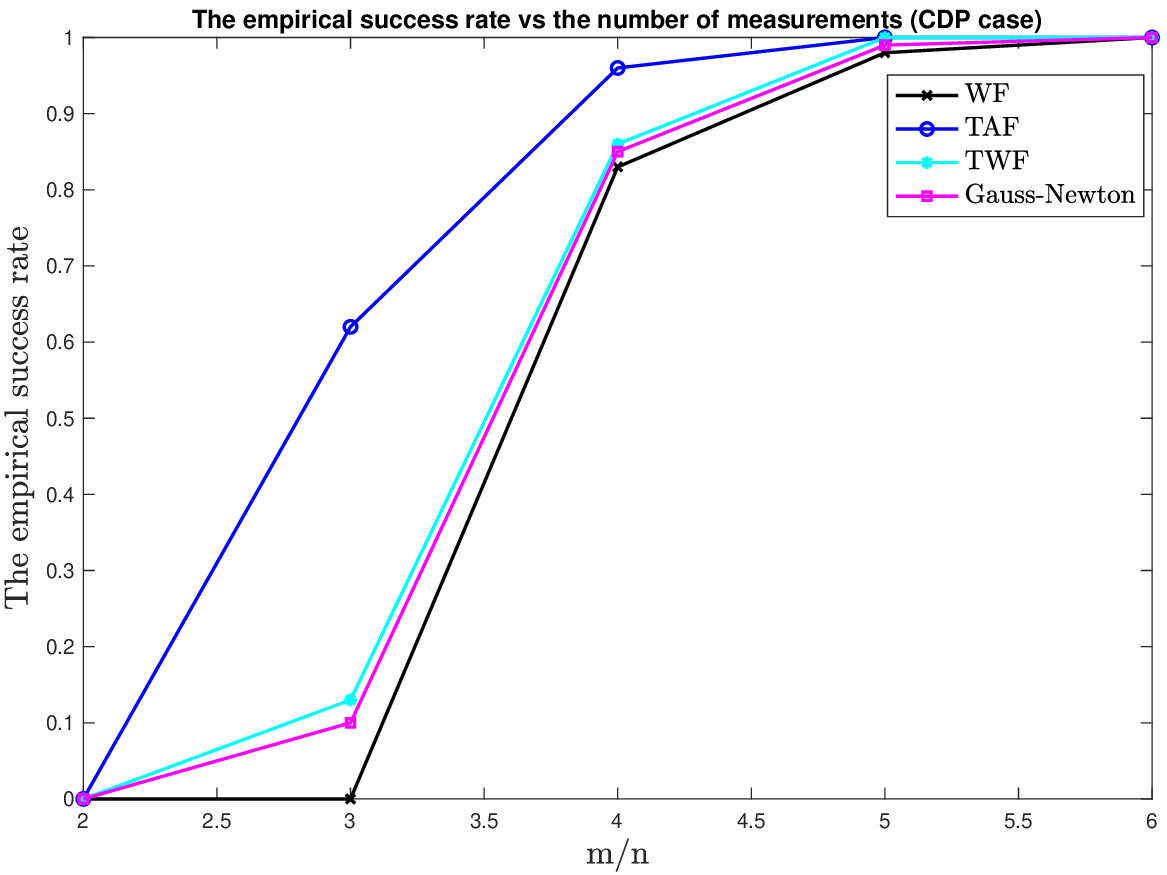}}
\caption{ The empirical success rate for different $m/n$ based on $100$ random trails. (a) Success rate for complex Gaussian case, (b) Success rate for CDP case.}
\label{figure:suc}
\end{figure}

\subsection{Robusteness to Poisson and Gaussian noises}
We now investigate the performance of the Gauss-Newton method under noisy measurements. Two types of noise distributions are considered. The first is Poisson noise, where $y_j=\mbox{Poisson} (\abs{\nj{\va_j,\vxs}}^2)$ for all $j=1,\ldots,m$.  The second is additive white Gaussian noise, where $y_j=\abs{\nj{\va_j,\vxs}}^2+\xi_j$ with $\xi_j$ being i.i.d. Gaussian random variables.  The measurements $\va_j \in \C^n$ are complex Gaussian random vectors.
\begin{example}
In this example, we compare the computational complexity of the Gauss-Newton method with those of WF, TWF, TAF under noisy measurements. We set $n=1000$ and the number of measurements $m=5n$.  For Gaussian noises, $\xi_j \sim 0.1 \cdot \mathcal N(0,1)$ for all $1\le j\le m$.  The relative error versus the number of iterations are presented in  \ref{figure:relative_error1}. We observe that the Gauss-Newton method requires the fewest iterations to converge for both Gaussian and Poisson noise.
\begin{figure}[H]
\centering
\subfigure[]{
     \includegraphics[width=0.45\textwidth]{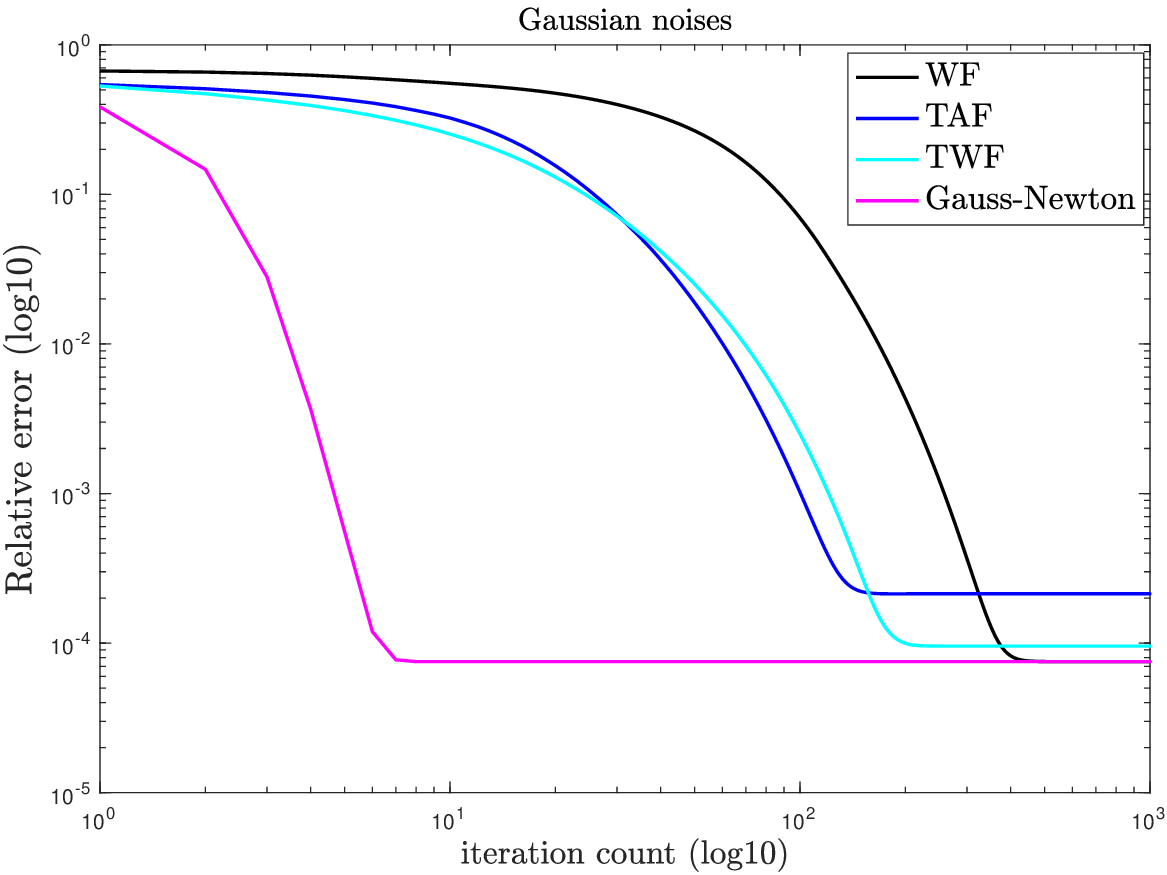}}
\subfigure[]{
     \includegraphics[width=0.45\textwidth]{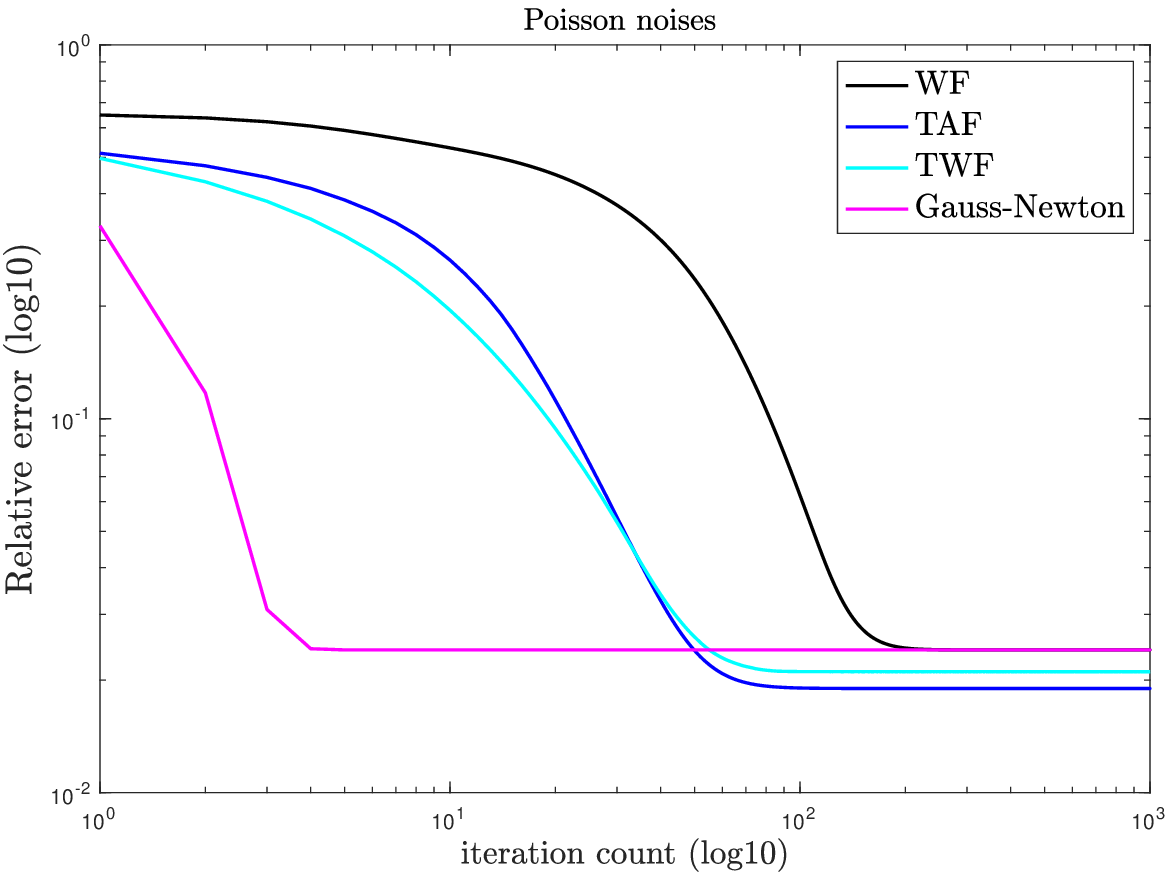}}
\caption{ Relative error versus number of iterations for Gauss-Newton, WF, TWF, and TAF methods under noisy measurements: (a) Gaussian noises; (b) Poisson noises.}
\label{figure:relative_error1}
\end{figure}
\end{example}

\begin{example}
In this example, we introduce varying levels of Gaussian and Poisson noise to examine the relationship between the signal-to-noise ratio (SNR) of the measurements and the mean square error (MSE) of the recovered signal. Specifically, SNR and MSE are evaluated by
\[
\mbox{MSE}:= 10 \log_{10} \frac{{\rm dist }^2(\vz,\vx)}{\norms{\vx}^2} \quad \mbox{and} \quad \mbox{SNR}=10 \log_{10} \frac{\sum_{i=1}^m |\va_j^* \vx|^4}{\norms{\xi}^2},
\]
where $\vz$ is the output of the algorithms. Here, for Poisson noises, we define $\xi_j:=y_j- |\va_j^* \vx|^2$ for all $j=1,\ldots, m$ where $y_j= {\rm Poisson} (|\va_j^* \vx|^2)$.
We set  $n=1000$ and $m=5n$,  and vary the SNR from 20 dB to 60 dB. The results are shown in Figure \ref{figure:SNR}.  We observe that the Gauss-Newton method performs well for noisy phase retrieval in comparison with state-of-the-art algorithms.
\begin{figure}[H]
\centering
\subfigure[]{
     \includegraphics[width=0.45\textwidth]{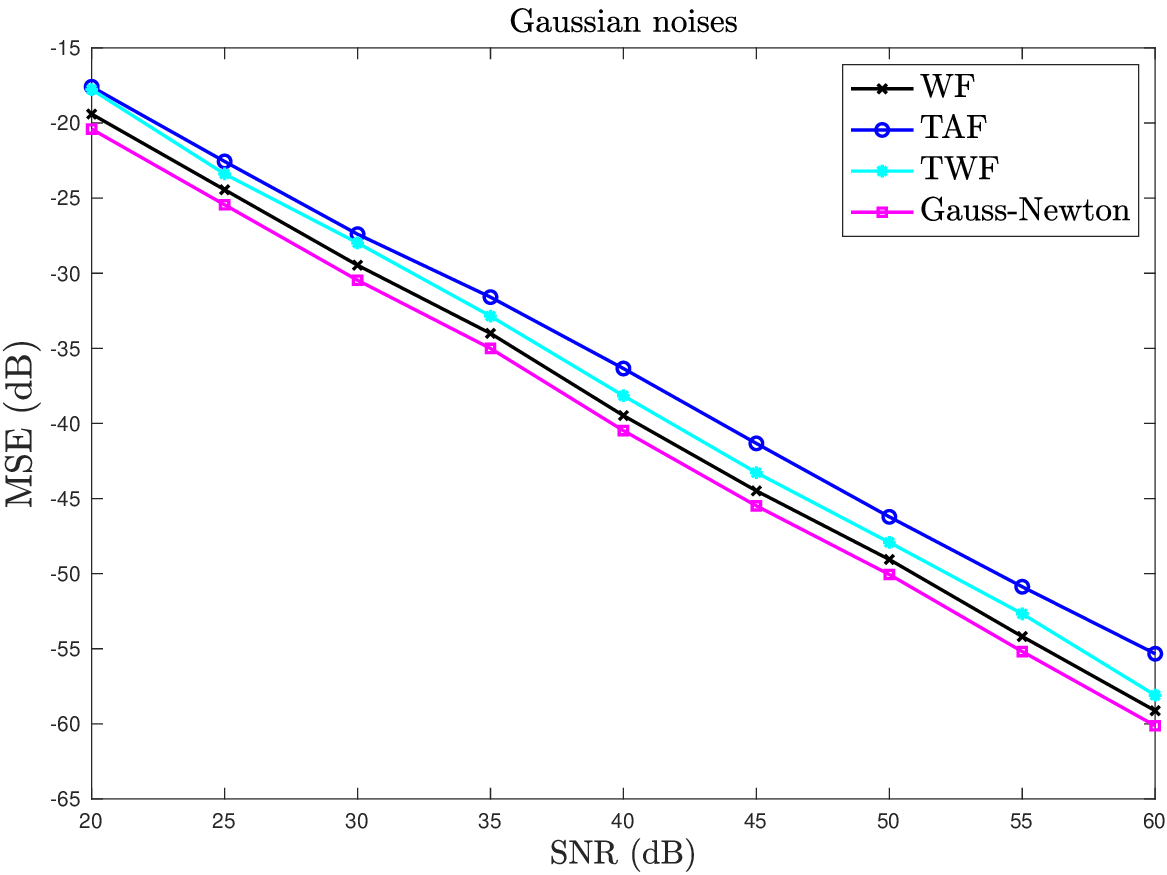}}
\subfigure[]{
     \includegraphics[width=0.45\textwidth]{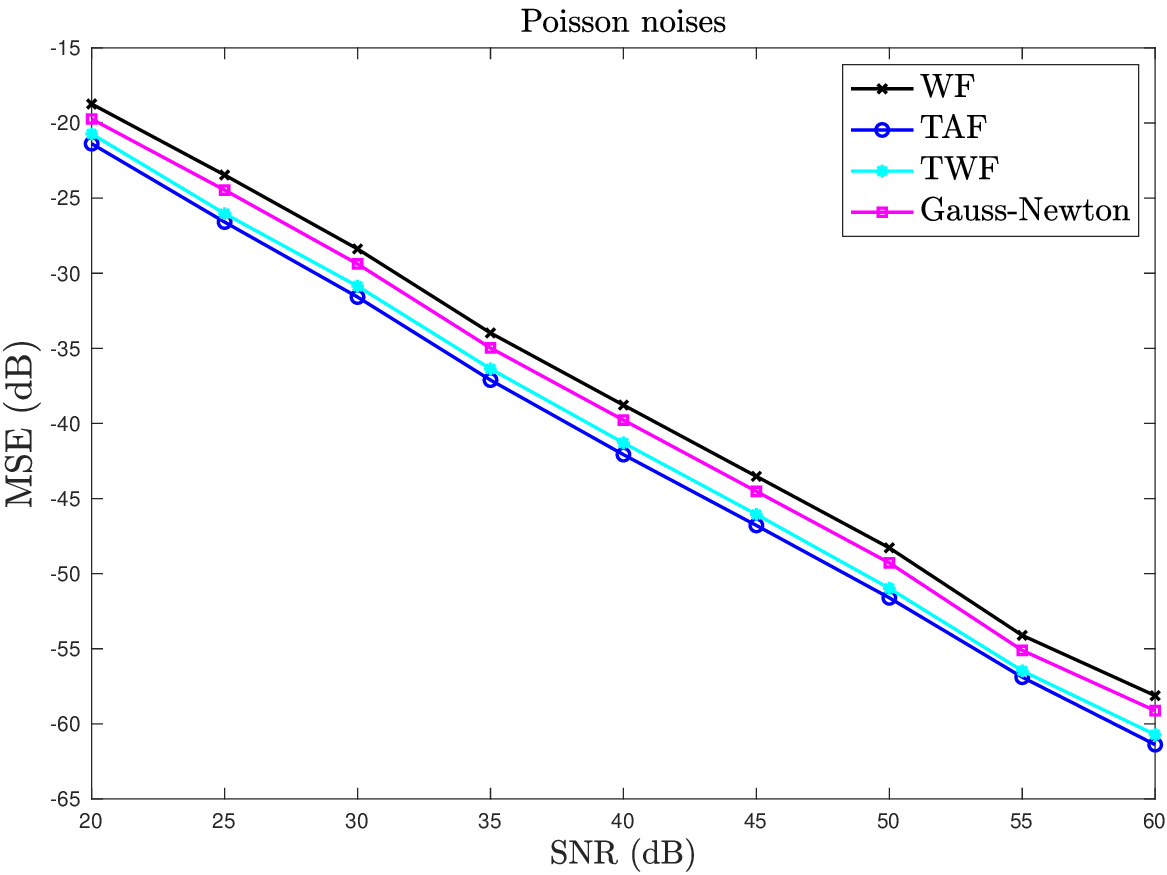}}
\caption{ SNR versus relative MSE on a dB-scale under the noisy measurements: (a) Gaussian noises; (b)  Poisson noises.}
\label{figure:SNR}
\end{figure}
\end{example}

\subsection{Recovery of Natural Image}
Next, we compare the performance of the Gauss-Newton method in recovering a natural image from masked Fourier intensity measurements. The image used is the Milky Way Galaxy with a resolution of  $1080 \times 1920$. The colored image contains RGB channels. We employ $L=18$ random octanary patterns to obtain the Fourier intensity measurements for each R/G/B channel, as described in \cite{WF}. Table \ref{tab:performance_comp_image} lists the averaged time elapsed and the number of iterations required to achieve relative errors of  $10^{-5}$ and $10^{-10}$   across the three RGB channels. The results show that the Gauss-Newton method runs faster than WF, TWF, and TAF, outperforming these algorithms in both the number of iterations and computational time.  Figure  \ref{figure:galaxy} shows the image recovered by the Gauss-Newton method.

\begin{table}[tp]
  \centering
  \fontsize{13}{16}\selectfont
  \caption{Time Elapsed and Number of Iterations among Algorithms on Recovery of Galaxy Image.}
  \label{tab:performance_comp_image}
    \begin{tabular}{|c|c c|cc|}
    \hline
    \multirow{2}{*}{Algorithm}&
    \multicolumn{4}{c|}{The Milky Way Galaxy}\cr\cline{2-5}
    &\multicolumn{2}{c|}{$10^{-5}$ }&\multicolumn{2}{c|}{$10^{-10}$ }\cr \hline
    & \# Passes & Time(s) & \# Passes & Time(s) \cr \hline
   Gauss-Newton & \bf{12} &\bf{103.2} & \bf{17} &\bf{134.1} \cr\hline
    WF &184 & 160.6 & 268 & 224.3 \cr \hline
    TAF &129 & 153.1 &196 & 218.6 \cr \hline
    TWF& 61 & 114.3& 97 &176.2\cr \hline
    \end{tabular}
\end{table}

\begin{figure}[H]
\centering
     \includegraphics[width=0.9\textwidth]{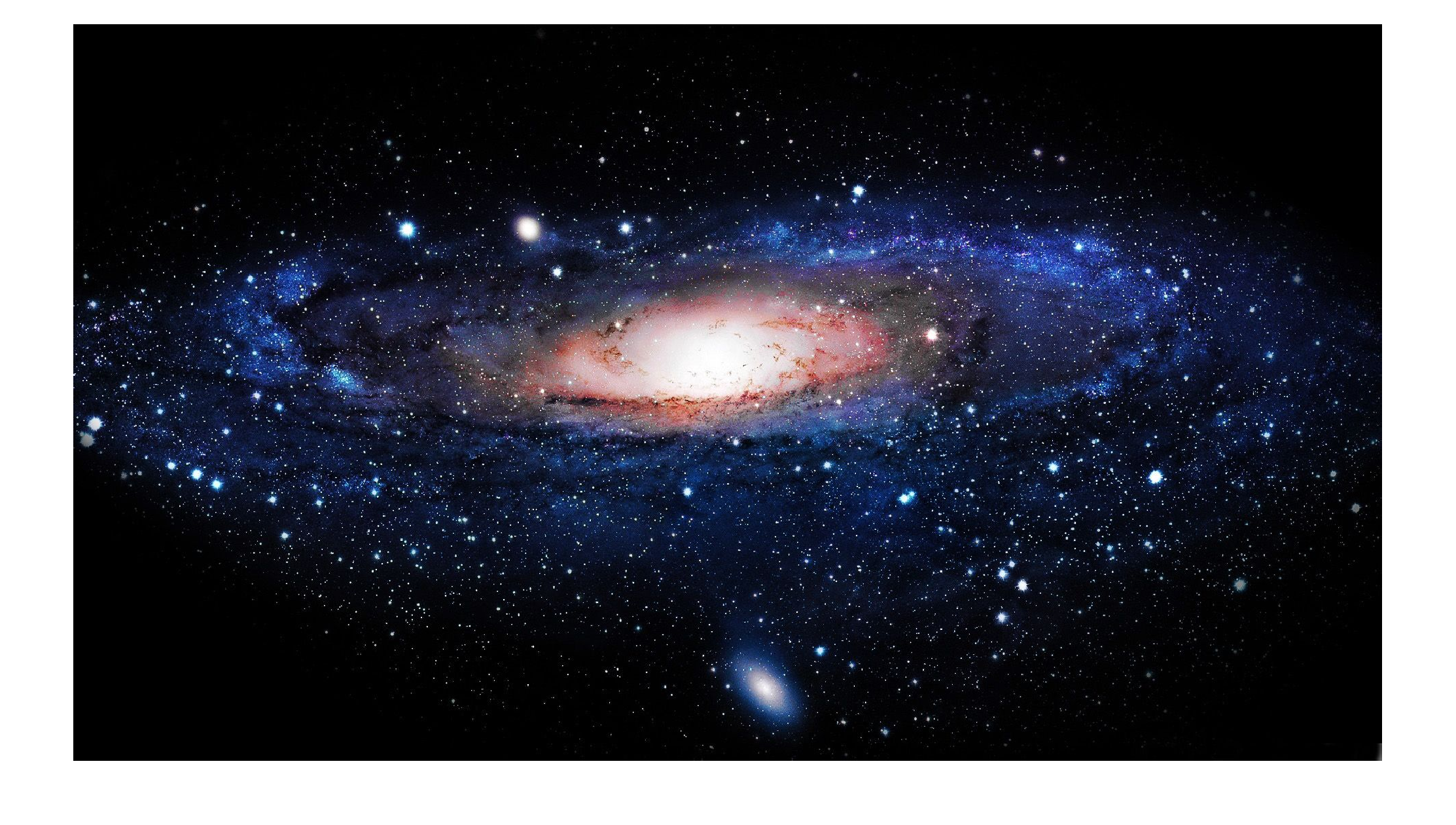}
\caption{ The Milky Way Galaxy image: The Gauss-Newton method with $L=18$ takes $22$ iterations, computation time is $168.8$ s, relative error is  $4.56 \times 10^{-15}$.}
\label{figure:galaxy}
\end{figure}

\section{Convergence analysis}
In this section, we present the proof of the main results.  Without loss of generality, we always assume that $\norm{\vxs}=1$. Before proceeding, we collect a few simple facts. The standard concentration inequality reveals that 
\begin{equation} \label{eq:inajxs}
\max_{1\le j\le m} ~\abs{\va_j^* \vxs  }  \le 5 \sqrt{\log m} \|\vxs\|_2
\end{equation}
with probability exceeding $1-O(m^{-10})$. In addition, one can apply the standard concentration inequality again to show that
\begin{equation} \label{eq:maallaj}
\max_{1\le j\le m} ~ \norm{\va_j} \le \sqrt{6n}
\end{equation}
with probability at least $1-O(me^{-1.5 n})$.

Our theoretical analysis employs leave-one-out arguments, a technique used to demonstrate the weak statistical dependency between the iterates and the design vectors. The general recipe of leave-one-out arguments for non-convex algorithms can be found in \cite{macong}. In below, we first demonstrate that if the current iteration $\vz_k$ stays in the region of incoherence and contraction, then the estimation error of the next iteration $\vz_{k+1}$ shrinks, which implies $\vz_{k+1}$ stays in the region of contraction. 
Next, we employ leave-one-out arguments to show that  $\vz_{k+1}$ also stays in the region of incoherence.  Finally, via an induction argument, the proof is complete  by verifying the desired properties of the initial guess $\vz_0$.

\subsection{Error Contraction}
In this section, we demonstrate that if the current iteration $\vz_k$ remains within the region of incoherence and contraction, then the estimation error of the next iteration $\vz_{k+1}$ will decrease.

\begin{lemma}  \label{le:conun1}
For any sufficiently small constant $\epsilon_0>0$, suppose that $m\ge C_0 \epsilon_0^{-4} n\log^3 m$ for a constant $C_0>0$.  Then with probability at least $1- O(m^{-10})$, 
\[
\dist(\vz_{k+1}, \vxs) \le 2 \dist^2(\vz_{k}, \vxs) +\epsilon_0 \dist(\vz_{k}, \vxs)
\]
holds simultaneously for all $\vz_k$ obeying 
\begin{subequations} \label{eq:assumled}
\begin{gather} 
\norm{\vz_k- \vxs e^{i\phi(\vz_k)} } \le \delta ,  \label{eq:assumleda}\\
\max_{1\le j\le m} ~\abs{\va_j^* (\vz_k- \vxs e^{i\phi(\vz_k)})} \le C_1 \sqrt{\log m}.   \label{eq:assumledb}
\end{gather}
\end{subequations}
Here, $\vz_{k+1}$ is obtained by the Gauss-Newton update rule \eqref{eq:al1zk}, and $0<\delta\le 0.01, C_1>0$ are universal constants.
\end{lemma}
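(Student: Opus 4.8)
The plan is to analyze one Gauss-Newton step via the closed-form expression $\vz_{k+1}=\vz_k-A(\vz_k)^\dag F(\vz_k)(1{:}n)$ from Lemma \ref{le:psu}, and to exploit the invertibility of $H(\vz_k)$ on the subspace $S(\vz_k)$ guaranteed by Lemma \ref{le:Hzlowup}. First, I would fix the phase: write $\vh_k:=\vz_k-\vxs e^{i\phi(\vz_k)}$, so that $\dist(\vz_k,\vxs)=\norm{\vh_k}$, and note $\Im(\vz_k^*\vxs e^{i\phi(\vz_k)})=0$. The key algebraic identity to establish is an exact Taylor-type expansion: since $F_j(\vz)=\abs{\va_j^*\vz}^2-\abs{\va_j^*\vxs}^2$ is a quadratic (in the real sense), one has $F(\vz_k)=A(\vz_k)[\vh_k;\overline{\vh_k}]-R(\vz_k)$ where the remainder $R$ collects the second-order terms, with $\abs{R_j(\vz_k)}\asymp\abs{\va_j^*\vh_k}^2/\sqrt m$. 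Substituting and using $A(\vz_k)^\dag A(\vz_k)[\vh_k;\overline{\vh_k}]=[\vh_k;\overline{\vh_k}]$ — which holds because $[\vh_k;\overline{\vh_k}]$ lies in the range of $[U(\vz_k);\overline{U(\vz_k)}]$ once we check $\Im(\vh_k^*\vz_k)=0$ — the first-order terms cancel, leaving $\vz_{k+1}-\vxs e^{i\phi(\vz_k)}=A(\vz_k)^\dag R(\vz_k)(1{:}n)$ up to a correction lying along the trivial direction $i\vz_k$, which only helps since $\dist$ is a minimum over phases. Hence $\dist(\vz_{k+1},\vxs)\le\norm{A(\vz_k)^\dag R(\vz_k)}\le\norm{A(\vz_k)^\dag}\,\norm{R(\vz_k)}$.

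Next I would bound the two factors. For $\norm{A(\vz_k)^\dag}$: assumption \eqref{eq:assumleda}–\eqref{eq:assumledb} together with \eqref{eq:inajxs} gives $\max_j\abs{\va_j^*\vz_k}\le C_1'\sqrt{\log m}\norm{\vz_k}$, so Lemma \ref{le:Hzlowup} applies and yields $H(\vz_k)\succeq 1.9\norm{\vz_k}^2 I$, i.e. the restriction of $A(\vz_k)^*A(\vz_k)$ to $S(\vz_k)$ has smallest eigenvalue $\ge 1.9\norm{\vz_k}^2$. Since $R(\vz_k)$ and $F(\vz_k)$ lie in $\C^m$ and $A(\vz_k)^\dag$ maps into the range of $[U;\overline U]$, we get $\norm{A(\vz_k)^\dag R(\vz_k)}\le \norm{R(\vz_k)}/\sqrt{1.9}\,\norm{\vz_k}\le \norm{R(\vz_k)}/(1.3\norm{\vz_k})$, and $\norm{\vz_k}\ge 1-\delta$. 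For $\norm{R(\vz_k)}$: one has $\norm{R(\vz_k)}^2=\frac1m\sum_j c_j\abs{\va_j^*\vh_k}^4$ for bounded $c_j$, and I would split this as a main term controlled by the fourth-moment concentration $\frac1m\sum_j\abs{\va_j^*\vh_k}^4\le (2+\epsilon)\norm{\vh_k}^4$ (valid uniformly over $\vh_k$ on the sphere when $m\gtrsim \epsilon^{-2}n$, via a covering argument as in the Wirtinger-flow literature) — giving the clean quadratic term $2\dist^2(\vz_k,\vxs)$ — plus a crude fallback using \eqref{eq:assumledb}: on the event where the fourth moment is not well-concentrated we instead use $\abs{\va_j^*\vh_k}^2\le C_1^2\log m$ to pull out $\max_j\abs{\va_j^*\vh_k}^2\le C_1^2\log m$ and pair it with $\frac1m\sum_j\abs{\va_j^*\vh_k}^2\le(1+\epsilon)\norm{\vh_k}^2$, contributing a term $\lesssim\sqrt{\frac{\log m}{?}}\norm{\vh_k}$; calibrating $m\ge C_0\epsilon_0^{-4}n\log^3 m$ forces this cross term below $\epsilon_0\dist(\vz_k,\vxs)$. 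Assembling, $\dist(\vz_{k+1},\vxs)\le\frac{1}{1.3(1-\delta)}\big((\sqrt2+\epsilon)\dist^2(\vz_k,\vxs)+\tfrac{\epsilon_0}{2}\dist(\vz_k,\vxs)\big)$; with $\delta\le 0.01$ the leading constant is below $2$, yielding the claim. The uniformity over all $\vz_k$ satisfying \eqref{eq:assumled} comes from the uniform statements in Lemma \ref{le:Hzlowup} and the covering arguments.

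The main obstacle I anticipate is making the remainder bound genuinely \emph{uniform} over the continuum of admissible $\vz_k$ while simultaneously extracting the sharp constant $2$ (not merely $O(1)$) in front of $\dist^2$. A naive union bound over a net loses the sharp fourth-moment constant; the fix is to handle the "bulk" contribution to $\frac1m\sum_j\abs{\va_j^*\vh_k}^4$ with a careful truncation at level $\log m$ — exactly where the incoherence hypothesis \eqref{eq:assumledb} is used to certify that the truncated and untruncated sums agree — and then invoke a uniform concentration result for the truncated fourth moment, which concentrates around $2\norm{\vh_k}^4$ with the right constant because $\E\abs{\va_j^*\vh_k}^4=2\norm{\vh_k}^4$ for complex Gaussians. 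A secondary technical point is verifying $\Im(\vh_k^*\vz_k)=0$ so that $[\vh_k;\overline{\vh_k}]\in\mathrm{range}([U(\vz_k);\overline{U(\vz_k)}])$: this follows from $\Im(\vz_k^*\vz_k)=0$ and $\Im(\vz_k^*\vxs e^{i\phi(\vz_k)})=0$, the latter being the defining property of $\phi(\vz_k)$ recorded after \eqref{eq:defphi}. Once these are in place the rest is bookkeeping with the constants $\delta$, $\epsilon$, $\epsilon_0$.
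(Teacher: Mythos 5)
Your proposal is correct and runs essentially parallel to the paper's own proof; the one genuine reorganization is worth noting. You expand $F(\vz_k)$ exactly as $A(\vz_k)[\vh_k;\overline{\vh_k}]-R(\vz_k)$ with $R_j=\tfrac{1}{\sqrt m}\abs{\va_j^*\vh_k}^2$ (this identity is exact because each $F_j$ is a real-quadratic in $\vz$), so after cancelling the linear part via $A^\dag A[\vh_k;\overline{\vh_k}]=[\vh_k;\overline{\vh_k}]$ you are left with bounding $\norm{R}^2=\tfrac1m\sum_j\abs{\va_j^*\vh_k}^4$. The paper reaches the very same remainder through the fundamental theorem of calculus, $F(\vz_k)=\int_0^1 A(\vz_\tau)\,d\tau\,[\vh_k;\overline{\vh_k}]$, and then a near-Lipschitz bound $\norm{A(\vz_k)-A(\vz_\tau)}\le 2(1-\tau)\norm{\vh_k}+\epsilon_0/2$; unwinding that Lipschitz estimate shows both routes ultimately invoke the same truncated concentration result (Lemma \ref{le:contra2}), so the two arguments differ only in packaging, and both produce a coefficient strictly below $2$. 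Two small cautions. First, the "crude fallback" in your middle paragraph would only give $\norm{R}\lesssim\sqrt{\log m}\,\norm{\vh_k}$, which is far from $\epsilon_0\norm{\vh_k}$; the mechanism that actually works is the one you describe at the end — truncated Bernstein giving $\tfrac1m\sum\abs{\va_j^*\vh_k}^4\le 2\norm{\vh_k}^4+O\bigl(\beta^2\sqrt{n\log m/m}\bigr)\norm{\vh_k}^2$ with $\beta\asymp\sqrt{\log m}$, whose linear-in-$\norm{\vh_k}$ contribution is $\lesssim(n\log^3 m/m)^{1/4}\norm{\vh_k}\lesssim\epsilon_0\norm{\vh_k}$. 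It does not "concentrate around $2\norm{\vh_k}^4$" in a relative sense uniformly over all small $\vh_k$; the additive $\norm{\vh_k}^2$-scaled error is what you must carry and it becomes exactly the $\epsilon_0\,\dist(\vz_k,\vxs)$ term. Second, the bookkeeping around $\norm{A^\dag}$ has two hidden $\sqrt 2$'s that cancel: since $[U;\overline U]^*[U;\overline U]=2I_{2n-1}$, one has $\sigma_{\min}^+(A)^2=\lambda_{\min}(H)/2\ge 0.95\norm{\vz_k}^2$ (not $1.9\norm{\vz_k}^2$), but $\norm{A^\dag R(1{:}n)}=\norm{A^\dag R}/\sqrt 2$, so your combined bound $\dist(\vz_{k+1},\vxs)\le\norm{R}/(\sqrt{1.9}\,\norm{\vz_k})$ is in fact correct even though the intermediate statements are each off by $\sqrt 2$.
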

\begin{proof}
See Section \ref{sec:Le2.1}.
\end{proof}

\subsection{Leave-One-Out Sequences}
As shown in Lemma \ref{le:conun1},  if the current iteration $\vz_k$ obeys \eqref{eq:assumled}, then the next iteration $\vz_{k+1}$ will also satisfy \eqref{eq:assumleda}. However, establishing the incoherence condition  \eqref{eq:assumledb} is more complicated. This complexity arises partly from the statistical dependence between $\vz_k$ and the sampling vectors $\va_j$. To address this issue, we employ a leave-one-out approach to introduce an auxiliary sequence of iterations, using all but one sample for consideration.

To be precise, for each $1\le l\le m$, we consider the leave-one-out empirical loss function

\[
f^{(l)}(\vz):=\frac1{m} \sum_{j\neq l} \xkh{\abs{\va_j^* \vz}^2- y_j }^2:=  \frac1{m} \sum_{j\neq l} \xkh{F_j(\vz)}^2.
\]
Then the auxiliary iterations $\vz_{k}^{(l)}$ is constructed by running Gauss-Newton method with respect to $f^{(l)}(\vz)$. More specifically, if the current iteration is $\vzl_k$ then the next iteration is
\[
\vz_{k+1}^{(l)}=\vz_{k}^{(l)} + \delta_k^{(l)}
\]
with 
\begin{equation} \label{eq:minpro11}
\delta_k^{(l)}=\mbox{argmin}_{\delta \in \C^n} \quad \norm{ A^{(l)}(\vz_{k}^{(l)}) \left [ \begin{array}{l} \delta \vspace{0.5em}\\ \overline{ \delta } \end{array}\right]  + F^{(l)}(\vz_{k}^{(l)}) }^2  \qquad \mbox{s.t.} \qquad \Im(\delta^* \vz_k^{(l)})=0
\end{equation}
where
\begin{equation} \label{eq:Azkl}
A^{(l)}(\vz):=\frac1{\sqrt{m}}   \left [ \begin{array}{cc} \vz^*\va_1\va_1^*,&  \vz^\T \bar{\va}_1\va_1^\T \\
\vdots & \vdots\\
\vz^*\va_{l-1}\va_{l-1}^*,&  \vz^\T \bar{\va}_{l-1}\va_{l-1}^\T \\
\vz^*\va_{l+1}\va_{l+1}^*,&  \vz^\T \bar{\va}_{l+1}\va_{l+1}^\T \\
\vdots & \vdots\\
\vz^*\va_m\va_m^*,&  \vz^\T \bar{\va}_m\va_m^\T \end{array} \right ] \in \C^{(m-1)\times 2n}
\end{equation}
and 
\begin{equation} \label{eq:Fzkl}
 F^{(l)}(\vz)=\frac1{\sqrt{m}}  (\abs{\va_1^* \vz}^2- y_1,\ldots, \abs{\va_{l-1}^* \vz}^2- y_{l-1},\abs{\va_{l+1}^* \vz}^2- y_{l+1},\ldots, \abs{\va_m^* \vz}^2- y_m  )^\T.
\end{equation}
In addition, the spectral initialization $\vz_0^{(l)}$ is computed based on the rescaled leading eigenvector of the leave-one-out matrix
\[
Y^{(l)}:=\frac1m \sum_{j\neq l} y_j \va_j\va_j^*.
\]
Clearly, the entire sequence $\dkh{\vzl_k}_{k\ge 0}$ is independent of the $l$-th sampling vector $\va_l$. This auxiliary procedure is formally described in Algorithm \ref{al:2}.
\begin{algorithm}[H]  
\caption{The $l$-th leave-one-out sequence for complex phase retrieval}
\label{al:2}
\begin{algorithmic}[H]
\Require
Measurement vectors: $\va_j \in \C^n, j=1,\ldots,m, j\neq l $; Observations: $y_j \in \C,  j=1,\ldots,m, j\neq l$; the maximum number of iterations $T$.   \\
\textbf{Spectral initialization:} Let $\lambda_1(Y^{(l)})$ and $\tz_0^{(l)} \in \C^n$ be the leading eigenvalue and eigenvector of 
\[
Y^{(l)}:=\frac1m \sum_{j\neq l} y_j \va_j\va_j^*,
\]
respectively. And set $\vzl_0=\sqrt{\lambda_1(Y^{(l)})/2}\cdot \tz_0^{(l)}$. \\
\textbf{Gauss-Newton updates:}  for $k=0,1,\ldots, T-1$ 
\[
\vzl_{k+1}=\vzl_k- \Al(\vzl_k)^{\dag} \Fl(\vzl_k)(1:n),
\]
where $\Al(\vzl_k)$ and $\Fl(\vzl_k)$ are given in \eqref{eq:Azkl} and \eqref{eq:Fzkl}, respectively.
\Ensure
The vector $ \vzl_T $.
\end{algorithmic}
\end{algorithm}

\subsection{Establishing the Incoherence Condition by Induction}
As previously indicated, to prove the main theorem, it is sufficient to demonstrate that the iterates  $\dkh{\vz_k}_{k\ge 0}$ satisfy \eqref{eq:assumledb} with high probability. Our proof will follow an inductive approach. Therefore, we outline all the induction hypotheses as follows:
\begin{subequations} \label{eq:hypoind}
\begin{gather} 
\dist(\vz_k,\vxs) \le \delta ,  \label{eq:hypoinda}\\
\max_{1\le l \le m} ~\dist(\vz_k, \vzl_k) \le C_2 \sqrt{\frac{\log m} m},  \label{eq:hypoindb} \\
\max_{1\le l \le m} ~\abs{\va_l^* (\vz_k- \vxs e^{i\phi(\vz_k)})} \le C_1 \sqrt{\log m}.   \label{eq:hypoindc}
\end{gather}
\end{subequations}
Here, $C_1, C_2>0$ are some universal constants, and $\delta$ is a constant obeying $0<\delta \le 0.01$.

According to Lemma \ref{le:conun1},  if the conditions \eqref{eq:hypoind} hold for the $k$-th iteration, then with probability at least $1-O(m^{-10})$, one has
\begin{equation} \label{eq:diszkx0}
\dist(\vz_{k+1},\vxs) \le \delta.
\end{equation}
This implies that  the hypothesis \eqref{eq:hypoinda}  holds for  the $(k+1)$-th iteration, given that \eqref{eq:hypoind} is satisfied up to the $k$-th iteration.   The subsequent lemma demonstrates that \eqref{eq:hypoindb} is also valid for the $(k+1)$-th iteration.

\begin{lemma} \label{le:diszkl}
Suppose the $m\ge C_0n\log^3m$ for some sufficiently large constant $C_0>0$. Assume that the hypotheses in  \eqref{eq:hypoind} hold for the $k$-th iteration. Then with probability at least $1-O(m^{-10})-O(me^{-1.5n})$, one has
\begin{equation} \label{eq:diszkl0}
\max_{1\le l \le m} ~\dist(\vz_{k+1}, \vzl_{k+1}) \le C_2 \sqrt{\frac{\log m} m}.
\end{equation}
\end{lemma}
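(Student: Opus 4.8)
\textbf{Proof proposal for Lemma \ref{le:diszkl}.}
The plan is to bound $\dist(\vz_{k+1},\vzl_{k+1})$ by comparing the two Gauss-Newton updates applied to $\vz_k$ and $\vzl_k$ respectively, and then propagating the induction hypothesis \eqref{eq:hypoindb}. Fix $l$ and write $\phi=\phi(\vz_k)$; by Lemma \ref{le:psu} we have $\vz_{k+1}=\vz_k - A(\vz_k)^{\dag}F(\vz_k)(1:n)$ and $\vzl_{k+1}=\vzl_k - \Al(\vzl_k)^{\dag}\Fl(\vzl_k)(1:n)$. First I would align the phases: after multiplying $\vzl_{k+1}$ by a suitable unit-modulus scalar, it suffices to control $\|\vz_{k+1} - \vzl_{k+1} e^{i\psi}\|$ for the minimizing $\psi$, and a standard argument (as in \cite{macong}) lets us instead work with a common reference phase coming from $\phi(\vz_k)$. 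The core decomposition is
\[
\vz_{k+1}-\vzl_{k+1} = (\vz_k - \vzl_k) - \bigl(A(\vz_k)^{\dag}F(\vz_k)(1:n) - \Al(\vzl_k)^{\dag}\Fl(\vzl_k)(1:n)\bigr),
\]
and the second bracket I would split into (i) the difference caused by changing the point $\vz_k\mapsto\vzl_k$ while keeping the full sample set, and (ii) the difference caused by dropping the $l$-th sample while keeping the point fixed at $\vzl_k$.

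For term (i), I would use the near-Lipschitz property of $A(\cdot)$ from Lemma \ref{le:conun1} together with the lower bound $H(\vz)\succeq 1.9\|\vz\|^2 I$ and upper bound $A(\vz)^*A(\vz)\preceq 5\|\vz\|^2 I$ from Lemma \ref{le:Hzlowup} (applicable because the induction hypothesis \eqref{eq:hypoindc} plus \eqref{eq:inajxs} give the requisite near-independence for both $\vz_k$ and, via \eqref{eq:hypoindb}, $\vzl_k$). The pseudoinverse perturbation bound $\|B_1^{\dag}-B_2^{\dag}\|\lesssim \max(\|B_1^{\dag}\|^2,\|B_2^{\dag}\|^2)\|B_1-B_2\| + \max(\|B_1^{\dag}\|,\|B_2^{\dag}\|)\|B_1-B_2\|$ (restricted to the common subspace $S(\cdot)$ so that the matrices are genuinely invertible there) reduces everything to bounding $\|A(\vz_k)-\Al(\vzl_k)\|$ and $\|F(\vz_k)-\Fl(\vzl_k)\|$. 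Crucially, because $F(\vz_k)$ is essentially of order $\dist(\vz_k,\vxs)^2$ (each residual $\abs{\va_j^*\vz_k}^2-y_j$ is quadratically small near $\vxs$ given \eqref{eq:assumleda} and \eqref{eq:assumledb}), the leading contraction factor multiplying $\|\vz_k-\vzl_k\|$ will be a small constant times $\dist(\vz_k,\vxs)\le\delta$, hence strictly less than $1$; this is what makes the recursion close.

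For term (ii), the point is fixed, so $\Al(\vzl_k)^{\dag}\Fl(\vzl_k) - A(\vzl_k)^{\dag}F(\vzl_k)$ is controlled by a rank-one perturbation analysis: $A(\vzl_k)^*A(\vzl_k) - \Al(\vzl_k)^*\Al(\vzl_k) = \frac1m (\text{terms involving } \va_l)$ and $A(\vzl_k)^*F(\vzl_k) - \Al(\vzl_k)^*\Fl(\vzl_k) = \frac1m (\text{one term involving }\va_l)$. Here the statistical independence of $\vzl_k$ from $\va_l$ is exploited: conditioned on the leave-one-out data, $\va_l^*\vzl_k$ is a genuine complex Gaussian of size $O(\sqrt{\log m})\|\vzl_k\|$ with high probability, and $\|\va_l\|\le\sqrt{6n}$ from \eqref{eq:maallaj}, so each such term is of order $\sqrt{n\log m}/m$ in norm, which after multiplication by $\|A^{\dag}\|^2=O(1)$ and the residual magnitude contributes at most $O(\sqrt{\log m/m})$ — consistent with the target, provided $m\gtrsim n\log^3 m$ and $C_2$ is chosen large enough relative to $C_1$. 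I would also need the bound $\|\Fl(\vzl_k)\| = O(\dist(\vzl_k,\vxs))$, which follows from \eqref{eq:hypoinda}, \eqref{eq:hypoindb} and the restricted isometry of the $\va_j$'s.

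Assembling the two terms gives a recursion of the shape $\dist(\vz_{k+1},\vzl_{k+1}) \le (c\delta + \epsilon_0)\,\dist(\vz_k,\vzl_k) + C\sqrt{n\log m}/m \cdot \dist(\vz_k,\vxs) + (\text{lower order})$; since $\dist(\vz_k,\vxs)\le\delta$ and $\sqrt{n\log m}/m\cdot\delta \lesssim \sqrt{\log m/m}$ when $m\gtrsim n$, and since the contraction factor $c\delta+\epsilon_0<1$, the hypothesis \eqref{eq:diszkl0} with the same constant $C_2$ propagates, after choosing $C_2$ large relative to the constant $C$ and absorbing the geometric series. \textbf{The main obstacle} I anticipate is term (ii) — precisely, controlling the pseudoinverse difference $\Al(\vzl_k)^{\dag} - A(\vzl_k)^{\dag}$ on the correct subspace rather than the matrices themselves; the subspaces $S(\vz)$ depend on $\vz$ and the orthonormal bases $U(\vz)$ are not canonical, so care is needed to choose compatible bases (or to argue basis-independently via the subspace-restricted pseudoinverse) before invoking perturbation estimates, and to verify that the invertibility hypothesis of Lemma \ref{le:Hzlowup} holds for the leave-one-out iterate with the leave-one-out design. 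A secondary technical point is handling the global phase alignment consistently across $\vz_{k+1}$ and $\vzl_{k+1}$, which I would manage by the usual two-step phase comparison through $\vxs e^{i\phi(\vz_k)}$.
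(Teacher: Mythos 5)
Your overall plan---align phases, decompose the difference into a ``point-change with full design'' piece and a ``leave-one-out at a fixed point'' piece, and propagate the hypothesis \eqref{eq:hypoindb}---is structurally close to what the paper does (the paper actually uses a three-way split $I_1+I_2+I_3$ into terms \eqref{eq:zkl00}, with your (i) further divided into the residual-difference part $I_1$ and the operator-difference part $I_2$, and your (ii) corresponding to $I_3$). Your treatment of the leave-one-out piece via Sherman--Morrison and the independence of $\va_l$ from $\vzl_k$ is also the right idea, as is your identification of the subspace-compatibility of the bases $U(\vz)$ as a technical hurdle (the paper handles this with Lemma \ref{le:subper} and Wedin's pseudoinverse decomposition, Lemma \ref{le:psudecom}).

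However, there is a genuine gap in your argument for the point-change piece (i), and it is the part that makes the whole recursion close. You bound $A(\vz_k)^{\dag}F(\vz_k)-A(\vzl_k)^{\dag}F(\vzl_k)$ as a stand-alone quantity via a pseudoinverse perturbation inequality, and then justify the resulting contraction factor by the claim that ``$F(\vz_k)$ is essentially of order $\dist(\vz_k,\vxs)^2$.'' This claim is false: each residual $\abs{\va_j^*\vz_k}^2-\abs{\va_j^*\vxs}^2$ has a nonvanishing linear term $2\Re(\overline{\va_j^*\vxs e^{i\phi}}\,\va_j^*(\vz_k-\vxs e^{i\phi}))$, so $\norm{F(\vz_k)}$ is first-order in $\dist(\vz_k,\vxs)$, not second-order---as indeed your own later statement $\norm{\Fl(\vzl_k)}=O(\dist(\vzl_k,\vxs))$ correctly asserts, so your proposal is internally inconsistent on this point. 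Once $\norm{F}$ is only $O(\dist)$, the piece $\norm{A(\vz_k)^{\dag}(F(\vz_k)-F(\vzl_k))}$ is of full order $O(\norm{\vz_k-\vzl_k})$ with constant of order one, and subtracting it from $\vz_k-\vzl_k$ as you propose yields no contraction; the recursion then fails to close.

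What you are missing is the Gauss--Newton cancellation: the displacement $\vz_k-\tzl_k$ must be combined with $A(\vz_k)^{\dag}\bigl(F(\vz_k)-F(\tzl_k)\bigr)$ \emph{before} taking norms, using the identity $\hH(\vz_k)A^*(\vz_k)A(\vz_k)\bigl[\begin{smallmatrix}\vz_k-\tzl_k\\ \overline{\vz_k-\tzl_k}\end{smallmatrix}\bigr]=\bigl[\begin{smallmatrix}\vz_k-\tzl_k\\ \overline{\vz_k-\tzl_k}\end{smallmatrix}\bigr]$ on the subspace $S(\vz_k)$ together with the fundamental theorem of calculus $F(\vz_k)-F(\tzl_k)=\int_0^1 A(\vz_\tau)\,d\tau\,\bigl[\begin{smallmatrix}\vz_k-\tzl_k\\ \cdot\end{smallmatrix}\bigr]$. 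This converts the would-be $O(1)$ factor into $\int_0^1\norm{A(\vz_k)-A(\vz_\tau)}\,d\tau$, which by the near-Lipschitz bound \eqref{eq:LipAz1} is at most $2\norm{\vz_k-\tzl_k}+\tfrac1{25}=O(\sqrt{\log m/m})+\tfrac1{25}$, giving the paper's contraction $I_1\le\tfrac{\sqrt2}{10}\norm{\vz_k-\tzl_k}$. Only after isolating this integral structure is it legitimate to treat the remaining operator-difference part $\bigl(\hH(\vz_k)A^*(\vz_k)-\hH(\vzl_k)A^*(\tzl_k)\bigr)F(\tzl_k)$ via pseudoinverse perturbation, where the linear-order size of $F(\tzl_k)$ combined with the small size of $\norm{\vz_k-\tzl_k}$ gives the additional $O(\delta)$ contribution. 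Your proposal needs this restructuring to be correct.
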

\begin{proof}
The proof, which relies heavily on the decomposition theory for pseudo-inverses, is deferred to Section \ref{pro:zkl}.
\end{proof}

A direct consequence of Lemma \ref{le:diszkl} is the incoherence between $\vz_{k+1}-  \vxs e^{i\phi(\vz_{k+1})}$ and $\dkh{\va_l}$, namely,
\[
\max_{1\le l \le m} ~\abs{\va_l^* (\vz_{k+1}-  \vxs e^{i\phi(\vz_{k+1})})} \le C_1 \sqrt{\log m}.
\]
To see this,  define
\begin{equation} \label{eq:dezkl}
\phi_{\vz_k, \vzl_k}^{\mutual}=\mbox{argmin}_{\phi \in [0,2\pi)} \quad \norm{\vz_k-\vzl_k e^{i\phi}}, \qquad  \tzl_k=\vzl_k e^{i \phi_{\vz_k, \vzl_k}^{\mutual}}.
\end{equation}
One can use the triangle inequality to show that with probability at least $1-O(m^{-9})-O(me^{-1.5n})$, it holds

\begin{eqnarray} \label{eq:nearindaz}
&& \\
&& \max_{1\le l \le m} ~\abs{\va_l^* (\vz_{k+1}-  \vxs e^{i\phi(\vz_{k+1})})}  \notag\\
&=&  \max_{1\le l \le m} ~\abs{\va_l^* (\vz_{k+1}  e^{- i\phi(\vz_{k+1})} -  \vxs )}   \notag \\
& \le & \max_{1\le l \le m} ~\Abs{\va_l^* \xkh{ \vz_{k+1}  e^{- i\phi(\vz_{k+1})}  -  \vzl_{k+1}  e^{- i\phi(\vzl_{k+1})} }}  +  \max_{1\le l \le m} ~\Abs{\va_l^* \xkh{  \vzl_{k+1}  e^{- i\phi(\vzl_{k+1})}  -\vxs }}  \notag  \\
& \overset{\text{(i)}}{\le}  & \max_{1\le l \le m} ~ \norm{\va_l} \norm{\vz_{k+1}  e^{- i\phi(\vz_{k+1})}  -  \vzl_{k+1}  e^{- i\phi(\vzl_{k+1})} }+ 5 \sqrt{\log m} \norm{ \vzl_{k+1}  e^{- i\phi(\vzl_{k+1})}  -\vxs }  \notag \\
&\le & \xkh{\sqrt{6n}+  5 \sqrt{\log m} } \norm{\vz_{k+1}  e^{- i\phi(\vz_{k+1})}  -  \vzl_{k+1}  e^{- i\phi(\vzl_{k+1})} } +  5 \sqrt{\log m} \norm{  \vz_{k+1}  e^{- i\phi(\vz_{k+1})}   -\vxs }  \notag \\
& \overset{\text{(ii)}}{\le}  & \xkh{\sqrt{6n}+  5 \sqrt{\log m} } \norm{\vz_{k+1}-  \tzl_{k+1}}+ 5 \sqrt{\log m} \norm{ \vz_{k+1}-  \vxs e^{i\phi(\vz_{k+1})}}  \notag \\
& \overset{\text{(iii)}}{\le}  &  \xkh{\sqrt{6n}+  5 \sqrt{\log m} }  \cdot C_2 \sqrt{ \frac{\log m}m } + 5 \sqrt{\log m}  \cdot \delta   \notag\\
&\le & C_1 \sqrt{\log m} \notag
\end{eqnarray}
for some constant $C_1\ge 8C_2+ 5\delta$. Here, (i) follows from the Cauchy-Schwarz inequality and the independent between $\va_l$ and $\vzl_{k+1}$, (ii) arises from Lemma \ref{le:8.4} by setting $\vz_1=\vz_{k+1} e^{-i\phi(\vz_{k+1})} , \vz_2= \tzl_{k+1} e^{-i\phi(\vz_{k+1})} $ since $\norm{\vz_{k+1} e^{-i\phi(\vz_{k+1})}-\vxs} \le \delta$ and 
\[
\norm{\tzl_{k+1}  e^{-i\phi(\vz_{k+1})}  -\vxs} \le \norm{\vz_{k+1} - \tzl_{k+1}  }+\norm{\vz_{k+1} e^{-i\phi(\vz_{k+1})}-\vxs}  \le 2\delta < \frac14.
\]
And (iii) comes from the bound \eqref{eq:diszkl0} and the condition \eqref{eq:diszkx0}.

Using mathematical induction, we demonstrate that if the current iteration $\vz_k$ satisfies the hypotheses \eqref{eq:hypoind}, then the next iteration $\vz_{k+1}$ will also satisfy these hypotheses with high probability. To complete the proof, it remains to verify that the hypotheses hold for the base case ($k=0$), specifically that the spectral initialization satisfies \eqref{eq:hypoind}. This can be established using Wedin's sin$\Theta$ theorem. 

\begin{lemma} \label{le:z0x}
For any fixed constant $\delta>0$, suppose $m\ge C_0 n \log m$ for some large constant $C_0>0$. Then with probability exceeding $1-O(m^{-10})$, the vectors $\vz_0 $ given in Algorithm \ref{al:1} obeys
\[
\dist(\vz_0,\vxs) \le \delta \|\vxs\|_2.
\]
\end{lemma}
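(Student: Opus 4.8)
The plan is to control two quantities: the gap between the leading eigenvector $\tz_0$ of $Y=\frac1m\sum_j y_j\va_j\va_j^*$ and its population counterpart, and the gap between the scaling factor $\sqrt{\lambda_1(Y)/2}$ and $\norm{\vxs}=1$. Recall that for complex Gaussian $\va_j$ one has $\E[Y]=\E[|\va_1^*\vxs|^2\va_1\va_1^*]=\norm{\vxs}^2 I_n+\vxs\vxs^*$, so the top eigenvector of $\E[Y]$ is exactly $\vxs/\norm{\vxs}$ with eigenvalue $2\norm{\vxs}^2=2$, while the remaining eigenvalues all equal $1$, giving a spectral gap of $1$. First I would invoke a standard matrix concentration / covering-net argument (e.g. a truncated Bernstein bound over an $\epsilon$-net of $\mathbb S_{\C}^{n-1}$, or a direct appeal to known fourth-moment concentration results for phase retrieval) to show that $\norm{Y-\E[Y]}\le \epsilon'$ for an arbitrarily small constant $\epsilon'$ with probability $1-O(m^{-10})$, provided $m\ge C_0 n\log m$.

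Next I would apply Weyl's inequality to get $|\lambda_1(Y)-2|\le\epsilon'$, hence $|\sqrt{\lambda_1(Y)/2}-1|\lesssim\epsilon'$, and apply Wedin's $\sin\Theta$ theorem (or the Davis--Kahan theorem) to the Hermitian matrices $Y$ and $\E[Y]$: since the spectral gap between the top eigenvalue and the rest of $\E[Y]$ is $1$, we obtain $\dist(\tz_0,\vxs/\norm{\vxs})=\min_{\phi}\norm{\tz_0-e^{i\phi}\vxs/\norm{\vxs}}\lesssim\norm{Y-\E[Y]}\le\epsilon'$. Combining the eigenvector bound with the scaling bound via the triangle inequality,
\[
\dist(\vz_0,\vxs)\le \bigl|\sqrt{\lambda_1(Y)/2}-1\bigr|\cdot\norm{\tz_0}+\bigl\|\sqrt{\lambda_1(Y)/2}\,\tz_0-\vxs e^{i\phi}\bigr\| \lesssim \epsilon',
\]
and choosing the constant $C_0$ in the sample complexity large enough makes the right-hand side at most $\delta\norm{\vxs}$, which is the claim.

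The main obstacle is the concentration step $\norm{Y-\E[Y]}\le\epsilon'$: the summands $y_j\va_j\va_j^*=|\va_j^*\vxs|^2\va_j\va_j^*$ are fourth-order in Gaussians and hence heavy-tailed (sub-exponential at best, not sub-Gaussian), so a naive matrix Bernstein bound does not directly yield a dimension-free error. The standard fix is truncation — replacing $y_j\va_j\va_j^*$ by its restriction to the event $\{|\va_j^*\vxs|\lesssim\sqrt{\log m},\ \norm{\va_j}\lesssim\sqrt n\}$ (which by \eqref{eq:inajxs} and \eqref{eq:maallaj} holds for all $j$ with the stated probability), bounding the contribution of the truncated part deterministically, and applying Bernstein to the truncated sum over an $\epsilon$-net. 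This is by now routine in the phase-retrieval literature (it is exactly the initialization analysis in \cite{WF}), so I would cite it rather than reproduce it; everything after that (Weyl, Wedin, triangle inequality) is elementary.
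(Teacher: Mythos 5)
Your proposal follows essentially the same route as the paper's proof: establish $\norm{Y-\E Y}$ is small (the paper invokes Lemma~\ref{le:contra}, a truncated fourth-moment concentration result, which is exactly the "routine phase-retrieval concentration" you describe), then apply Wedin's $\sin\Theta$ theorem for the eigenvector, Weyl's inequality for the eigenvalue scaling, and the triangle inequality to combine. The only cosmetic difference is that the paper cites the variant of Wedin's theorem from Dopico to get the explicit constant $\sqrt2/\mathrm{gap}$; your account of the argument, including the correct identification of $\E Y=\norm{\vxs}^2 I_n+\vxs\vxs^*$ and the spectral gap of $\norm{\vxs}^2$, is accurate.
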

\begin{proof}
See Section \ref{sec:z0x}.
\end{proof}

The hypothesis \eqref{eq:hypoindb} can also be checked by Wedin's  sin$\Theta$ theorem. 
\begin{lemma} \label{le:z0zl}
Suppose that $m\ge C_0n\log^3 m$ for some large constant $C_0>0$. Then with probability exceeding $1-O(m^{-9})$, one has
\[
\max_{1\le l \le m} ~\dist(\vz_0, \vzl_0) \le C_2 \sqrt{\frac{\log m} m}.
\]
\end{lemma}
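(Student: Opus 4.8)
The plan is to compare the rescaled leading eigenvectors of $Y = \frac1m\sum_{j} y_j \va_j\va_j^*$ and $Y^{(l)} = \frac1m\sum_{j\neq l} y_j\va_j\va_j^*$ for each fixed $l$, via a standard eigenvector perturbation argument, and then take a union bound over $l$. First I would record the relevant spectral facts: under $m\ge C_0 n\log^3 m$, with probability $1-O(m^{-10})$ one has $\|Y - \E Y\| \le \epsilon$ for a small constant $\epsilon$ (this is the standard spectral-initialization estimate, essentially the content behind Lemma~\ref{le:z0x}), where $\E Y = I + \vxs\vxs^*$ has a spectral gap of order $1$ between its top eigenvalue $2$ and second eigenvalue $1$. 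The same bound holds for every $Y^{(l)}$, and moreover $\lambda_1(Y), \lambda_1(Y^{(l)})$ are all within $O(\epsilon)$ of $2$, while the eigengap of each is at least, say, $1/2$.

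Next I would control the perturbation $Y - Y^{(l)} = \frac1m y_l \va_l\va_l^*$. Since $y_l = |\va_l^*\vxs|^2 \lesssim \log m$ with probability $1-O(m^{-10})$ (by \eqref{eq:inajxs}) and $\|\va_l\|^2 \lesssim n$ with probability $1-O(me^{-1.5n})$ (by \eqref{eq:maallaj}), we get $\|Y - Y^{(l)}\| \lesssim \frac{n\log m}{m}$ — but this is too weak by itself. The key refinement, exactly as in \cite{macong}, is that what actually enters Wedin's $\sin\Theta$ bound is not $\|Y-Y^{(l)}\|$ but the quantity $\|(Y - Y^{(l)})\tz_0^{(l)}\|$ (the perturbation applied to the unperturbed eigenvector), which equals $\frac1m y_l |\va_l^*\tz_0^{(l)}| \|\va_l\|$. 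Because $\tz_0^{(l)}$ is statistically independent of $\va_l$, the term $|\va_l^*\tz_0^{(l)}| \lesssim \sqrt{\log m}$ with probability $1-O(m^{-10})$ conditionally on $\tz_0^{(l)}$, so $\|(Y-Y^{(l)})\tz_0^{(l)}\| \lesssim \frac{\log m}{m}\sqrt{n}$. Applying Wedin's theorem with eigengap $\gtrsim 1$ then yields, after suitable choice of global phase for $\tz_0^{(l)}$, $\|\tz_0 - \tz_0^{(l)} e^{i\psi}\| \lesssim \sqrt{\frac{\log m}{m}}$ for the optimal $\psi$; combined with the closeness of $\lambda_1(Y)$ and $\lambda_1(Y^{(l)})$ to $2$ (so the scalars $\sqrt{\lambda_1(Y)/2}$ and $\sqrt{\lambda_1(Y^{(l)})/2}$ differ by $O(\epsilon)\lesssim\sqrt{\log m/m}$ and are both $\Theta(1)$), this upgrades to $\dist(\vz_0,\vzl_0)\lesssim \sqrt{\log m/m}$. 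Choosing $C_2$ large enough absorbs all implicit constants, and a union bound over $l=1,\dots,m$ costs a factor $m$, degrading $O(m^{-10})$ to $O(m^{-9})$.

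The main obstacle is the independence-exploiting step: one must be careful that $\tz_0^{(l)}$ really is independent of $\va_l$ (it is, since $Y^{(l)}$ omits the $l$-th term) and that the conditioning argument for $|\va_l^*\tz_0^{(l)}|\lesssim\sqrt{\log m}$ is carried out correctly — conditioning on $\{\va_j\}_{j\neq l}$ fixes $\tz_0^{(l)}$ as a deterministic unit vector, after which $\va_l^*\tz_0^{(l)}$ is a standard complex Gaussian scalar. A secondary technical point is handling the global phase ambiguity consistently: Wedin's theorem naturally produces a bound on the subspace angle, which must be converted to a bound on $\min_\psi \|\tz_0 - \tz_0^{(l)}e^{i\psi}\|$, and then the rescaling by the (phase-free, positive) factors $\sqrt{\lambda_1/2}$ must be shown not to interfere with the phase alignment. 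I would defer these routine-but-delicate computations to the referenced proof section.

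\begin{proof}
See Section \ref{sec:z0zl}.
\end{proof}
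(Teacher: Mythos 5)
Your approach --- Wedin's $\sin\Theta$ theorem sharpened by exploiting the independence of $\tz_0^{(l)}$ from $\va_l$ to bound $\norm{(Y-Y^{(l)})\tz_0^{(l)}}$ rather than $\norm{Y-Y^{(l)}}$, followed by a union bound over $l$ --- is exactly the argument behind Lemma 6 of \cite{macong}, which is precisely what the paper itself cites for this lemma; so you have identified the right route.

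One step of your sketch, however, is justified in a way that would not survive scrutiny. You claim the scalars $\sqrt{\lambda_1(Y)/2}$ and $\sqrt{\lambda_1(Y^{(l)})/2}$ ``differ by $O(\epsilon)\lesssim\sqrt{\log m/m}$'' because both eigenvalues lie near $2$. But the $\epsilon$ you introduced from $\norm{Y-\E Y}\le\epsilon$ is a \emph{constant} (under $m\gtrsim n\log^3 m$ one only gets $\epsilon\lesssim\sqrt{n\log^3 m/m}\lesssim 1/\sqrt{C_0}$), so the chain $O(\epsilon)\lesssim\sqrt{\log m/m}$ is false; and the crude Weyl bound $|\lambda_1(Y)-\lambda_1(Y^{(l)})|\le\norm{Y-Y^{(l)}}\lesssim n\log m/m$ is also too weak unless $m\gtrsim n^2\log m$. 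Getting the needed rate on the eigenvalue difference requires a second use of the leave-one-out structure: since $Y-Y^{(l)}\succeq 0$ one has $\lambda_1(Y^{(l)})\le\lambda_1(Y)\le\lambda_1(Y^{(l)})+\frac{1}{m}y_l|\va_l^*\tz_0|^2$, and then $|\va_l^*\tz_0|\le|\va_l^*(\tz_0-\tz_0^{(l)}e^{i\psi})|+|\va_l^*\tz_0^{(l)}|\lesssim\sqrt n\cdot\sqrt{\log m/m}+\sqrt{\log m}\lesssim\sqrt{\log m}$ using the eigenvector proximity you have already derived, giving $|\lambda_1(Y)-\lambda_1(Y^{(l)})|\lesssim\log^2 m/m\lesssim\sqrt{\log m/m}$. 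A smaller slip: $\norm{(Y-Y^{(l)})\tz_0^{(l)}}=\frac{1}{m}y_l|\va_l^*\tz_0^{(l)}|\,\norm{\va_l}\lesssim\frac{\log^{3/2}m}{m}\sqrt{n}$ rather than $\frac{\log m}{m}\sqrt n$ (you dropped the $\sqrt{\log m}$ factor from $|\va_l^*\tz_0^{(l)}|$); the final bound is unaffected since $m\gtrsim n\log^3 m$.
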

\begin{proof}
The proof can be easily adapted from the proof of Lemma 6 in \cite{macong} to the complex case; therefore, we omit it here.
\end{proof}

Based on Lemma \ref{le:z0x} and Lemma \ref{le:z0zl}, the hypothesis \eqref{eq:hypoindc} for $k=0$ can be proved using the same argument as in the derivation of \eqref{eq:nearindaz}, and is therefore omitted.

\subsection{Proof of the Theorem \ref{th:main}}
With Lemmas 5.1-5.4 in place, we are ready to prove the main result Theorem \ref{th:main}.
\begin{proof} [Proof of the Theorem \ref{th:main}]
Without loss of generality, we assume that $\norm{\vxs}=1$. 
Set $T_0:= n$. We divide $\vz_k$ into two stages: $0\le k\le T_0$ and $k>T_0$.  For the first stage,  Lemma \ref{le:z0x} and Lemma \ref{le:z0zl} show that the hypotheses \eqref{eq:assumled} hold for $k=0$ with probability at least $1-O(m^{-9})$.  Combining this with Lemma \ref{le:conun1},  the results of Theorem \ref{th:main} hold for $k=0$ with probability at least $1-O(m^{-9})$.  By invoking Lemma \ref{le:conun1} and Lemma \ref{le:diszkl} recursively for $T_0$ times and taking a union bound, one sees that \eqref{eq:main1} and \eqref{eq:main2} in Theorem \ref{th:main} hold for all $0\le k\le T_0$ with probability exceeding $1-O(m^{-8})-O(me^{-1.5n})$. 

We next turn to the second stage  $k>T_0$. Note that Theorem \ref{th:main} holds for all $0\le k\le T_0$. Therefore,  there exists a constant $0<\rho<1$ such that it holds
\[
\dist(\vz_{T_0+1}, \vxs) \le \rho^{T_0+1} \dist(\vz_{0}, \vxs) \le \frac{1}{n^2}
\]
with probability exceeding $1-O(m^{-8})$. Applying the Cauchy-Schwarz inequality and the fact \eqref{eq:maallaj}, one has
\begin{eqnarray*}
\max_{1\le j\le m} ~\abs{\va_j^* (\vz_{T_0+1}- \vxs e^{i\phi(\vz_{T_0+1})})} & \le &   \max_{1\le j\le m} ~ \norm{\va_j} \cdot \dist(\vz_{T_0+1}, \vxs) \\
&\le  & \sqrt{6n} \cdot \frac{1}{n^2} \\
& \le &  C_1 \sqrt{\log m}
\end{eqnarray*}
with probability at least $1-O(m^{-8})-O(me^{-1.5 n})$.  Using Lemma \ref{le:conun1} again, one can establish the results in Theorem \ref{th:main} for $k=T_0+1$. Repeating the above argument, one can prove Theorem \ref{th:main} for all $k>T_0$ with probability at least $1-O(m^{-8})-O(me^{-1.5 n})$. This completes the proof.

\end{proof}

\section{Discussions}
This paper considers the convergence of the Gauss-Newton method for phase retrieval in the complex setting. Due to the rank-deficiency of the Gauss-Newton matrices, each Gauss-Newton step is restricted to move orthogonally to certain trivial directions, corresponding to the minimal-norm Gauss-Newton method. By employing leave-one-out techniques, we establish an asymptotic quadratic convergence rate for the minimal-norm Gauss-Newton method  without the need of sample splitting.

There are some interesting problems for future research. First, due to the heavy-tailed behavior of the fourth powers of Gaussian random variables, we have only demonstrated the asymptotic quadratic convergence of the Gauss-Newton method. Proving the quadratic convergence rate through more sophisticated methods would be an interesting challenge.  Second,  our results indicate that the Gauss-Newton method succeeds with $m\ge O(n\log^3n)$ samples, while our simulations suggested that $O(n\log n)$ or $O(n)$ may suffice. Improving the sampling complexity to $O(n\log n)$ or even $O(n)$ would be a valuable advancement. Finally, it has been numerically shown that the Gauss-Newton method is also efficient for solving the Fourier phase retrieval problem, particularly when the measurements follow the coded diffraction pattern (CDP) model. Providing theoretical justifications for this efficiency would be of significant practical interest.

\section{Appendix A: Proofs of Technical Results} \label{sec:diszkl}

\subsection{Proof of Lemma \ref{le:Hzlowup}}\label{sec:Hzlowup} \hfill\\

From the definition of the matrix $A(\vz)$ given in \eqref{eq:Azk}, one has 
\begin{equation*} 
A(\vz)^* A(\vz)= \left [ \begin{array}{ll}  \frac 1m \sum_{j=1}^m \abs{\va_j^* \vz}^2 \va_j\va_j^* &   \frac 1m \sum_{j=1}^m (\va_j^*\vz)^2 \va_j \va_j^\T   \\  \frac 1m \sum_{j=1}^m \xkh{\vz^* \va_j}^2 \overline{\va_j} \va_j^* & \frac 1m \sum_{j=1}^m \abs{\va_j^* \vz}^2 \overline{\va_j} \va_j^\T   \end{array}\right].
\end{equation*}
Applying Lemma \ref{le:contra}, with probability exceeding $1-O(m^{-10})$, it holds
\begin{eqnarray*}
 \norm{A(\vz)^* A(\vz)} & \le & \norm{ \frac 1m \sum_{j=1}^m \abs{\va_j^* \vz}^2 \va_j\va_j^* } +\norm{ \frac 1m \sum_{j=1}^m \xkh{\va_j^*\vz}^2 \va_j \va_j^\T} \\
&\le & \norm{\vz\vz^* + \norm{\vz}^2 I_{n}} +2\norm{\vz \vz^\T}  + 2c_0 \sqrt{\frac{n\log^3 m}{m} } \norm{\vz}^2 \\
& \le & 5 \norm{\vz}^2
\end{eqnarray*}
for all $\vz \in \C^n$ obeying $\max_{1\le j\le m} ~\abs{\va_j^* \vz} \le C_1 \sqrt{\log m} \norm{\vz}$, provided $m\ge C_2 n\log^3 m$.  Here, $C_2, c_0>0$ are universal constants with $C_2\ge 4c_0^2$.

For the lower bound, by the definition of $H(\vz)$ given in \eqref{eq:Hz}, it suffices to show 
\[
\frac12 \left [ \begin{array}{l} \vw \vspace{0.4em} \\ \overline{ \vw } \end{array}\right]^* A(\vz)^* A(\vz) \left [ \begin{array}{l} \vw \vspace{0.4em} \\ \overline{ \vw } \end{array}\right] \ge 0.95\norm{\vz}^2  \quad \mbox{for all} \quad \norm{\vw}=1 \quad \mbox{with}  \quad \Im(\vw^* \vz)=0
\]
and for all $\vz$ obeying  
\[
\max_{1\le j\le m} ~\abs{\va_j^* \vz} \le C_1 \sqrt{\log m} \norm{\vz}.
\]
Applying Lemma \ref{le:contra} once again, with probability exceeding $1-O(m^{-10})$, it holds
\begin{eqnarray*}
\frac12\left [ \begin{array}{l} \vw \vspace{0.4em} \\ \overline{ \vw } \end{array}\right]^* A(\vz)^* A(\vz) \left [ \begin{array}{l} \vw \vspace{0.4em} \\ \overline{ \vw } \end{array}\right] &=& \frac 1m \sum_{j=1}^m \abs{\va_j^* \vz}^2 \abs{\va_j^* \vw}^2 + \frac 1m \sum_{j=1}^m \Re(\xkh{\va_j^* \vz}^2 \xkh{\vw^* \va_j}^2 )\\
&\ge &  \norm{\vz}^2 \norm{\vw}^2 + \abs{\vw^* \vz}^2  +  2\Re\xkh{(\vw^* \vz)^2} -   2c_0 \sqrt{\frac{n\log^3 m}{m} } \norm{\vz}^2    \\
&=& \norm{\vz}^2 + 3\abs{\vw^* \vz}^2 -  2c_0 \sqrt{\frac{n\log^3 m}{m} } \norm{\vz}^2\\
&\ge & 0.95 \norm{\vz}^2,
\end{eqnarray*}
where the third line follows from the fact that $\Im(\vw^* \vz)=0$, and the last inequality is valid by taking $m\ge C_0 n\log^3 n$ for some universal constant $C_0>0$. This completes the proof.

$\hfill\square$

\subsection{Proof of Lemma \ref{le:conun1}}\label{sec:Le2.1} \hfill\\

Recall that the Gauss-Newton update rule is 
\[
\vz_{k+1}=\vz_k+\delta_k,
\]
where 
\begin{equation} \label{eq:minpro2}
\delta_k=\mbox{argmin}_{\delta \in \C^n} \quad \norm{ A(\vz_k) \left [ \begin{array}{l} \delta \\ \overline{ \delta } \end{array}\right]  +  F(\vz_k) }^2, \qquad \mbox{s.t.} \qquad \Im(\delta^* \vz_k)=0.
\end{equation}
We treat $\C^n$ as $\R^{2n}$.  Then $S(\vz_k):=\dkh{\vw\in \C^n:  \Im(\vw^* \vz_k)=0}$ forms a subspace of dimension $2n-1$ over $\R^{2n}$.
Consider any matrix $U(\vz_k) \in \C^{n\times (2n-1)}$ whose columns forms an orthonormal basis for the subspace, i.e., $\Re(U_k^* U_l)=\delta_{k,l}$ for any columns $U_k$ and $U_l$. Then the problem \eqref{eq:minpro2} can be reformulated as 
\begin{equation} \label{eq:leastsqxik}
\xi_k=\mbox{argmin}_{\xi \in \R^{2n-1}} \quad \norm{A(\vz_k) \left [ \begin{array}{l} U(\vz_k) \vspace{0.4em} \\ \overline{ U(\vz_k) } \end{array}\right]  \xi + F(\vz_k)  }^2
\end{equation}
and $\delta_k=U(\vz_k) \xi_k$.  Observing that for any vector $\vz_k$ obeys $\norm{\vz_k- \vxs e^{i\phi(\vz_k)} } \le \delta $ and $\max_{1\le j\le m} ~\abs{\va_j^* (\vz_k- \vxs e^{i\phi(\vz_k)})} \le C_1 \sqrt{\log m}$, with probability exceeding $1-O(m^{-10})$,  one has
\begin{eqnarray}
\max_{1\le j\le m} ~\abs{\va_j^* \vz_k} & \le &  \max_{1\le j\le m} ~\abs{\va_j^* \vxs}+ \max_{1\le j\le m} ~\abs{\va_j^* (\vz_k- \vxs e^{i\phi(\vz_k)})}  \notag \\
& \le &  5\sqrt{\log m}+C_1 \sqrt{\log m} \notag \\
& \lesssim &  \sqrt{\log m} \norm{\vz_k}, \label{eq:incaz}
\end{eqnarray}
where we use the fact \eqref{eq:inajxs} in the second inequality, and the the fact that $\norm{\vz_k} \ge 1-\delta \ge 0.99$ in the last inequality.  Armed with the bound \eqref{eq:incaz}, we can apply Lemma \ref{le:Hzlowup} to obtain that with probability at least $1-O(m^{-10})$, the solution to \eqref{eq:leastsqxik} is
\[
\xi_k=- H^{-1}(\vz_k) \left [ \begin{array}{l} U(\vz_k) \vspace{0.4em} \\ \overline{ U(\vz_k) } \end{array}\right]^* A^*(\vz_k)  F(\vz_k),
\]
where the matrix $H(\vz_k)$ is defined in \eqref{eq:Hz}. Therefore, 
\begin{eqnarray*}
\norm{ \left [ \begin{array}{l} \vz_{k+1}-\vxs e^{i\phi(\vz_{k+1}))} \vspace{0.4em} \\ \overline{ \vz_{k+1}-\vxs e^{i\phi(\vz_{k+1})} } \end{array}\right]} &\le &\norm{ \left [ \begin{array}{l} \vz_{k+1}-\vxs e^{i\phi(\vz_k)} \vspace{0.4em} \\ \overline{ \vz_{k+1}-\vxs e^{i\phi(\vz_k)} } \end{array}\right]}  \\
&=& \norm{\left [ \begin{array}{l} \vz_{k}-\vxs e^{i\phi(\vz_k)} \vspace{0.4em} \\ \overline{ \vz_{k}-\vxs e^{i\phi(\vz_k)} } \end{array}\right]  - \left [ \begin{array}{l} U(\vz_k) \vspace{0.4em} \\ \overline{ U(\vz_k) } \end{array}\right]  H^{-1}(\vz_k) \left [ \begin{array}{l} U(\vz_k) \vspace{0.4em} \\ \overline{ U(\vz_k) } \end{array}\right]^* A^*(\vz_k)  F(\vz_k)}.
\end{eqnarray*}
Recall that $U(\vz_k)$ is an orthonormal basis constructed for the space $S(\vz_k):=\dkh{\vw\in \C^n:  \Im(\vw^* \vz_k)=0}$. Using the fact that 
$\Im((\vz_k- \vxs e^{i\phi(\vz_{k})})^* \vz_k)=0$, one has
\[
\left [ \begin{array}{l} \vz_{k}-\vxs e^{i\phi(\vz_k)} \vspace{0.4em} \\ \overline{ \vz_{k}-\vxs e^{i\phi(\vz_k)} } \end{array}\right]=\frac12  \left [ \begin{array}{l} U(\vz_k) \vspace{0.4em} \\ \overline{ U(\vz_k) } \end{array}\right] \left [ \begin{array}{l} U(\vz_k) \vspace{0.4em} \\ \overline{ U(\vz_k) } \end{array}\right]^* \left [ \begin{array}{l} \vz_{k}-\vxs e^{i\phi(\vz_k)} \vspace{0.4em} \\ \overline{ \vz_{k}-\vxs e^{i\phi(\vz_k)} } \end{array}\right].
\]
Similarly, one can also verify that
\begin{equation} \label{eq:idenAk}
A^*(\vz_k) = \frac12  \left [ \begin{array}{l} U(\vz_k) \vspace{0.4em} \\ \overline{ U(\vz_k) } \end{array}\right] \left [ \begin{array}{l} U(\vz_k) \vspace{0.4em} \\ \overline{ U(\vz_k) } \end{array}\right]^*  A^*(\vz_k).
\end{equation}
Therefore, 
\begin{eqnarray}
\norm{ \left [ \begin{array}{l} \vz_{k+1}-\vxs e^{i\phi(\vz_{k+1})} \vspace{0.4em} \\ \overline{ \vz_{k+1}-\vxs e^{i\phi(\vz_{k+1})} } \end{array}\right]} & \le  & \left\| \left [ \begin{array}{l} U(\vz_k) \vspace{0.4em} \\ \overline{ U(\vz_k) } \end{array}\right]  H^{-1}(\vz_k) \left [ \begin{array}{l} U(\vz_k) \vspace{0.4em} \\ \overline{ U(\vz_k) } \end{array}\right]^* A^*(\vz_k)   \right.  \notag\\
&& \left. \cdot \xkh{ \frac12 A(\vz_k) \left [ \begin{array}{l} U(\vz_k) \vspace{0.4em} \\ \overline{ U(\vz_k) } \end{array}\right] \left [ \begin{array}{l} U(\vz_k) \vspace{0.4em} \\ \overline{ U(\vz_k) } \end{array}\right]^* \left [ \begin{array}{l} \vz_{k}-\vxs e^{i\phi(\vz_k)} \vspace{0.4em} \\ \overline{ \vz_{k}-\vxs e^{i\phi(\vz_k)} } \end{array}\right] -  F(\vz_k)  } \right\|_2 \notag\\
&\le & \norm{ H^{-1}(\vz_k)  } \norm{A^*(\vz_k)  } \norm{ A(\vz_k)  \left [ \begin{array}{l} \vz_{k}-\vxs e^{i\phi(\vz_k)} \vspace{0.4em} \\ \overline{ \vz_{k}-\vxs e^{i\phi(\vz_k)} } \end{array}\right] -  F(\vz_k)  }. \label{eq:zkcon1}
\end{eqnarray}
where we use the identity 
\[
H^{-1}(\vz_k) \left [ \begin{array}{l} U(\vz_k) \vspace{0.4em} \\ \overline{ U(\vz_k) } \end{array}\right]^* A^*(\vz_k) A(\vz_k) \left [ \begin{array}{l} U(\vz_k) \vspace{0.4em} \\ \overline{ U(\vz_k) } \end{array}\right]= H^{-1}(\vz_k) H(\vz_k) = I_{2n-1}
\]
in the first inequality and \eqref{eq:idenAk} in the second inequality. The fundamental theorem of calculus together with the fact $F(\vxs e^{i\phi(\vz_k)})=0$ gives
\begin{equation} \label{eq:zkcon2}
F(\vz_k) =F(\vz_k) -F(\vxs e^{i\phi(\vz_k)})= \int_0^1 A(\vz_\tau) ~d\tau  \left [ \begin{array}{l} \vz_{k}-\vxs e^{i\phi(\vz_k)} \vspace{0.4em} \\ \overline{ \vz_{k}-\vxs e^{i\phi(\vz_k)} } \end{array}\right],
\end{equation}
where we denote $\vz_\tau=\vxs e^{i\phi(\vz_k)}  + \tau(\vz_k-\vxs e^{i\phi(\vz_k)} )$. Putting identity \eqref{eq:zkcon2} into \eqref{eq:zkcon1}, we obtain
\begin{equation} \label{eq:up3}
\norm{ \left [ \begin{array}{l} \vz_{k+1}-\vxs e^{i\phi(\vz_{k+1})} \vspace{0.4em} \\ \overline{ \vz_{k+1}-\vxs e^{i\phi(\vz_{k+1})} } \end{array}\right]} \le \norm{ H^{-1}(\vz_k)  } \norm{A^*(\vz_k)  }  \norm{\int_0^1 (A(\vz_k)-A(\vz_\tau)) ~d\tau   \left [ \begin{array}{l} \vz_{k}-\vxs e^{i\phi(\vz_k)} \vspace{0.4em} \\ \overline{ \vz_{k}-\vxs e^{i\phi(\vz_k)} } \end{array}\right] }.
\end{equation}
Lemma \ref{le:Hzlowup} together with \eqref{eq:incaz} yields  
\begin{equation} \label{eq:Hinvaz}
\norm{ H^{-1}(\vz_k)  }  \le  \frac{1}{1.9 \norm{\vz_k}^2}  \qquad \mbox{and} \qquad        \norm{A^*(\vz_k)  } \le \sqrt5 \norm{\vz_k} .
\end{equation}
 We claim that for any $\epsilon_0>0$, with probability at least $1-O(m^{-10})$, it holds
\begin{equation} \label{eq:LipAz}
\norm{A(\vz_k)-A(\vz_\tau) } \le 2(1-\tau)\norm{\vz_{k}-\vxs e^{i\phi(\vz_k)}}+ \frac{\epsilon_0}2,
\end{equation}
provided $m\ge C_0 \epsilon_0^{-4} n\log^3m$.  Putting \eqref{eq:Hinvaz} and \eqref{eq:LipAz} into \eqref{eq:up3}, we obtain
\[
\norm{\vz_{k+1}-\vxs e^{i\phi(\vz_{k+1})}} \le 2 \norm{\vz_{k}-\vxs e^{i\phi(\vz_k)}}^2 +\epsilon_0 \norm{\vz_{k}-\vxs e^{i\phi(\vz_k)}}.
\]
Here, we use the fact that $\norm{\vz_k} \le (1+\delta)\norm{\vxs}\le 1.01$.

It remains to prove the claim \eqref{eq:LipAz}. From the definition of $A(\vz)$, we know
\[
A(\vz_k)-A(\vz_\tau) = \frac1{\sqrt{m}}   \left [ \begin{array}{cc} (\vz_k-\vz_\tau)^*\va_1\va_1^*,&  (\vz_k-\vz_\tau)^\T \bar{\va}_1\va_1^\T \\
\vdots & \vdots\\
(\vz_k-\vz_\tau)^*\va_m\va_m^*,&  (\vz_k-\vz_\tau)^\T \bar{\va}_m\va_m^\T \end{array} \right ].
\]
Therefore, 
\[
\norm{A(\vz_k)-A(\vz_\tau) }^2 =\norm{ \left [ \begin{array}{ll}  \frac 1m \sum_{j=1}^m \abs{\va_j^* (\vz_k-\vz_\tau)}^2 \va_j\va_j^* \vspace{0.4em}&   \frac 1m \sum_{j=1}^m \xkh{\va_j^* (\vz_k-\vz_\tau)}^2 \va_j \va_j^\T   \\  \frac 1m \sum_{j=1}^m \xkh{(\vz_k-\vz_\tau)^* \va_j}^2 \overline{\va_j} \va_j^* & \frac 1m \sum_{j=1}^m \abs{\va_j^* (\vz_k-\vz_\tau)}^2 \overline{\va_j} \va_j^\T   \end{array}\right] }.
\]
Observe that 
\[
\norm{\vz_k-\vz_\tau} =(1-\tau) \norm{\vz_k- \vxs e^{i\phi(\vz_k)} } \le \delta<1
\]
and 
\[
\max_{1\le j\le m} ~  \abs{\va_j^* (\vz_k- \vz_\tau)}  = (1-\tau)  \cdot \max_{1\le j\le m} ~\abs{\va_j^* (\vz_k- \vxs e^{i\phi(\vz_k)})} \le C_1 \sqrt{\log m}
\]
for all $0\le \tau\le 1$. It then follows from Lemma \ref{le:contra2} that with probability exceeding $1-O(m^{-10})$, it holds
\begin{eqnarray*}
\norm{A(\vz_k)-A(\vz_\tau) }^2  & =  & \norm{ \frac 1m \sum_{j=1}^m \abs{\va_j^* (\vz_k-\vz_\tau)}^2 \va_j\va_j^*} + \norm{\frac 1m \sum_{j=1}^m \xkh{\va_j^* (\vz_k-\vz_\tau)}^2 \va_j \va_j^\T }\\
& \le &4\norm{ \vz_k-\vz_\tau }^2 +2 c_0 \sqrt{\frac{n\log^3m}{m}} \\
&\le &   4 (1-\tau)^2  \norm{\vz_{k}-\vxs e^{i\phi(\vz_k)}}^2 +  \frac{\epsilon_0^2}{4}.
\end{eqnarray*}
Here, the last inequality arises from the that  $m\ge C_0 \epsilon_0^{-4} n\log^3 m$ for some sufficiently large constant $C_0 >0$. This implies 
\[
\norm{A(\vz_k)-A(\vz_\tau) } \le 2(1-\tau)\norm{\vz_{k}-\vxs e^{i\phi(\vz_k)}} + \frac{\epsilon_0}{2},
\]
which concludes the proof.

$\hfill\square$

\subsection{Proof of Lemma \ref{le:diszkl}} \label{pro:zkl} \hfill\\

To begin with, we collect a few immediate consequences of the induction hypothese \eqref{eq:hypoind}, which are useful in the subsequent analysis.
\begin{lemma}
Assume that $m\ge C_0 n\log^3 m$ for some constant $C_0>0$.
Under the hypotheses \eqref{eq:hypoind}, with probability at least $1-O(m^{-10})$, one has
\begin{subequations} \label{eq:hypaft}
\begin{gather} 
0.99 \le \norm{\vz_k} \le 1.01 \label{eq:hypafta}\\
0.89 \le \norm{\vzl_k} \le 1.11 \label{eq:hypaftc}\\
\max_{1\le j\le m} ~\abs{\va_j^* \vz_k} \lesssim \sqrt{\log m} \norm{\vz_k},  \label{eq:hypaftb}\\
\max_{1\le j \le m} ~\abs{\va_j^* \vzl_k} \lesssim   \sqrt{\log m}  \norm{\vzl_k}.  \label{eq:hypaftd}
\end{gather}
\end{subequations}
\end{lemma}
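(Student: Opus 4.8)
The plan is to establish each of the four bounds in \eqref{eq:hypaft} as a consequence of the induction hypotheses \eqref{eq:hypoind} together with the concentration facts \eqref{eq:inajxs}, the fact $\norm{\vxs}=1$, and (for the last two bounds) Lemma \ref{le:z0zl}-type control or a direct perturbation argument. The bounds are purely deterministic given two probabilistic events—the event in \eqref{eq:inajxs} that $\max_j \abs{\va_j^*\vxs}\le 5\sqrt{\log m}$, which holds with probability $1-O(m^{-10})$—so the whole lemma is really a short triangle-inequality exercise.

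\textbf{Step 1: the bound \eqref{eq:hypafta}.} From \eqref{eq:hypoinda} we have $\dist(\vz_k,\vxs)\le\delta$, hence $\norm{\vz_k-\vxs e^{i\phi(\vz_k)}}\le\delta$. Since $\norm{\vxs}=1$, the triangle inequality gives $1-\delta\le\norm{\vz_k}\le 1+\delta$, and using $\delta\le 0.01$ yields $0.99\le\norm{\vz_k}\le 1.01$.

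\textbf{Step 2: the bound \eqref{eq:hypaftc}.} By \eqref{eq:hypoindb} and the definition of $\dist$, there is a phase so that $\norm{\vz_k-\tzl_k}\le\dist(\vz_k,\vzl_k)\le C_2\sqrt{(\log m)/m}$ with $\norm{\vzl_k}=\norm{\tzl_k}$. Combining with Step 1 and the triangle inequality, $\norm{\vzl_k}\le\norm{\vz_k}+C_2\sqrt{(\log m)/m}\le 1.01+o(1)$, and similarly $\norm{\vzl_k}\ge 0.99-o(1)$; since $m\ge C_0 n\log^3 m$ with $C_0$ large, the $o(1)$ term is at most, say, $0.1$, giving $0.89\le\norm{\vzl_k}\le 1.11$. (The slack between $0.99/1.01$ and $0.89/1.11$ is exactly what absorbs the leave-one-out perturbation, so no sharper estimate is needed.)

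\textbf{Step 3: the incoherence bounds \eqref{eq:hypaftb} and \eqref{eq:hypaftd}.} For \eqref{eq:hypaftb} we write $\abs{\va_j^*\vz_k}\le\abs{\va_j^*\vxs}+\abs{\va_j^*(\vz_k-\vxs e^{i\phi(\vz_k)})}$; the first term is $\le 5\sqrt{\log m}$ by \eqref{eq:inajxs} and the second is $\le C_1\sqrt{\log m}$ by \eqref{eq:hypoindc}, so $\max_j\abs{\va_j^*\vz_k}\le(5+C_1)\sqrt{\log m}\lesssim\sqrt{\log m}\norm{\vz_k}$ after dividing by $\norm{\vz_k}\ge 0.99$ from Step 1. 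For \eqref{eq:hypaftd} we add one more triangle-inequality step: $\abs{\va_j^*\vzl_k}=\abs{\va_j^*\tzl_k}\le\abs{\va_j^*\vz_k}+\norm{\va_j}\norm{\vz_k-\tzl_k}$, where $\norm{\va_j}\le\sqrt{6n}$ on the event \eqref{eq:maallaj} and $\norm{\vz_k-\tzl_k}\le C_2\sqrt{(\log m)/m}$ from Step 2, so the extra term is $\le C_2\sqrt{6n(\log m)/m}\lesssim\sqrt{\log m}$ when $m\gtrsim n$; dividing by $\norm{\vzl_k}\ge 0.89$ completes it. Alternatively, and more cleanly, one uses the independence of $\vzl_k$ from $\va_l$ only where needed—but here the bound is stated for all $j$, so the $\norm{\va_j}$ route is the right one and the probability becomes $1-O(m^{-10})-O(me^{-1.5n})$; since the lemma as stated claims only $1-O(m^{-10})$, one should instead bound the $j=l$ term via independence and a union bound over $l$, which is the approach I would actually carry out.

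\textbf{Main obstacle.} There is no real obstacle—this is a packaging lemma. The only point requiring care is getting \eqref{eq:hypaftd} with the claimed probability $1-O(m^{-10})$ rather than the weaker $1-O(me^{-1.5n})$ that a naive $\norm{\va_j}\le\sqrt{6n}$ bound would force; handling the omitted index $l$ via statistical independence of $\vzl_k$ and $\va_l$ (together with a standard sub-exponential tail bound for $\abs{\va_l^*\tzl_k}$ conditional on $\tzl_k$) keeps the failure probability at $O(m^{-10})$ after a union bound over $1\le l\le m$. Everything else is triangle inequalities and the already-established facts \eqref{eq:inajxs} and the bounds in \eqref{eq:hypoind}.
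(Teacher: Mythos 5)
Your Steps 1–3 are essentially identical to the paper's proof: triangle inequalities from \eqref{eq:hypoinda} and \eqref{eq:hypoindb} for \eqref{eq:hypafta} and \eqref{eq:hypaftc}, and the triangle inequality combined with \eqref{eq:hypoindc} and \eqref{eq:inajxs} for \eqref{eq:hypaftb}. Step 4 up to and including the $\norm{\va_j}\norm{\vz_k-\tzl_k}$ Cauchy--Schwarz bound also matches the paper's argument.

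The one genuine issue is in your ``Main obstacle'' paragraph. You correctly observe that using $\max_j\norm{\va_j}\le\sqrt{6n}$ (from \eqref{eq:maallaj}) costs an extra $O(me^{-1.5n})$ in failure probability, but your proposed remedy --- ``bound the $j=l$ term via independence and a union bound over $l$'' --- does not close this gap. The $O(me^{-1.5n})$ loss arises from controlling $\norm{\va_j}$ for \emph{every} $j$, since the second triangle-inequality term $\abs{\va_j^*(\vz_k-\tzl_k)}$ involves $\vz_k$, which is statistically entangled with all the $\va_j$; independence of $\vzl_k$ from $\va_l$ only controls the single term $\abs{\va_l^*\tzl_k}$ directly and leaves the $j\neq l$ terms exactly where they were. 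What the paper actually does at this point is bound $\max_j\norm{\va_j}$ by $\sqrt{6m}$ rather than $\sqrt{6n}$: since $\norm{\va_j}^2$ concentrates around $n\le m$, the event $\max_j\norm{\va_j}\le\sqrt{6m}$ holds with probability $1-me^{-cm}=1-O(m^{-10})$, and $\sqrt{6m}\cdot C_2\sqrt{(\log m)/m}\lesssim\sqrt{\log m}$ still closes the estimate. (In practice this distinction is cosmetic, because the enclosing Lemma~\ref{le:diszkl} already admits the additional $O(me^{-1.5n})$ term; but if you want the probability $1-O(m^{-10})$ as stated in this lemma, the $\sqrt{6m}$ bound, not a $j=l$ independence argument, is what delivers it.)
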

\begin{proof}
Regrading the first set of consequences \eqref{eq:hypaft}, by the triangle inequality, one has
\[
0.99 \le 1-\delta \le  \|\vxs\|_2- \norm{\vz_k-\vxs e^{i \phi(\vz_{k})}} \le \norm{\vz_k} \le \|\vxs\|_2+ \norm{\vz_k-\vxs e^{i \phi(\vz_{k})}}  \le 1+\delta \le 1.01.
\]
Here, we use the hypothesis  \eqref{eq:hypoinda} in the above inequality.

For the second set of consequences \eqref{eq:hypaft}, we have
\[
 \norm{\vzl_k}=\norm{\tzl_k} \le \norm{\vz_k-\tzl_k} +\norm{\vz_k} \le C_2 \sqrt{\frac{\log m} m}+ 1.01 \le 1.11, 
\]
provided $m \ge 100 C_2^2 \log m$.  Here, $\tzl_k$ is defined in \eqref{eq:dezkl}, and the second inequality comes from the hypothesis \eqref{eq:hypoindb}. The lower bound can be established similarly.

For the third set of consequences \eqref{eq:hypaft}, we invoke the triangle inequality once again to deduce that 
\[
\max_{1\le j\le m} ~\abs{\va_j^* \vz_k} \le \max_{1\le j\le m} ~\abs{\va_j^* (\vz_k- \vxs e^{i\phi(\vz_k)} ) } + \max_{1\le j\le m} ~\abs{\va_j^* \vxs  } \overset{\text{(i)}}{\le}  (C_1+5) \sqrt{\log m} \overset{\text{(ii)}}{ \lesssim} \sqrt{\log m} \norm{\vz_k}.
\]
Here, (i) utilizes the induction hypothesis  \eqref{eq:hypoindc} and the standard Gaussian concentration, namely,  $\max_{1\le j\le m} ~\abs{\va_j^* \vxs  }  \le 5 \sqrt{\log m}$ with probability exceeding $1-O(m^{-10})$, and (ii) comes from the fact \eqref{eq:hypafta}.

Finally, for the last set of consequences \eqref{eq:hypaft}, one has
\begin{eqnarray*}
\max_{1\le j \le m} ~\abs{\va_j^* \vzl_k}= \max_{1\le j \le m} ~\abs{\va_j^* \tzl_k}& \le &  \max_{1\le j \le m} ~\abs{\va_j^* \vz_k}+  \max_{1\le j \le m} ~\abs{\va_j^* (\vz_k-\tzl_k)} \\
&\le & \sqrt{\log m} + \sqrt{6m} \cdot \sqrt{\frac{\log m}{m}}  \\
&\lesssim &  \sqrt{\log m},
\end{eqnarray*}
where the second inequality follows from the fact \eqref{eq:hypaftb} and the hypothesis \eqref{eq:hypoindb}.

\end{proof}

\begin{proof}[Proof of Lemma \ref{le:diszkl}]
Set 
\begin{equation} \label{eq:lambda}
\lambda=\sqrt m \cdot \max_{1\le l \le m} ~\dist(\vz_{k+1},\vzl_{k+1}).
\end{equation}
It then suffices to show $\lambda \le C_2\sqrt{\log m}$ with probability at least $1-O(m^{-10})$. To this end,  we define
\[
\phi_{\vz_k, \vzl_k}^{\mutual}: =\mbox{argmin}_{\phi \in [0,2\pi)} \quad \norm{\vz_k-\vzl_k e^{i\phi}},
\]
and let $\tzl_k=\vzl_k e^{i \phi_{\vz_k, \vzl_k}^{\mutual}}$.   With these notations,  we have
\begin{equation} \label{eq:diszklp1}
 \dist(\vz_{k+1}, \vzl_{k+1}) \le \norm{\vz_{k+1}- \vzl_{k+1} e^{i \phi_{\vz_k, \vzl_k}^{\mutual}}}.
\end{equation}
For the iteration $\vz_{k+1}$, taking any matrix $U(\vz_k) \in \C^{n\times (2n-1)}$ whose columns forms an orthonormal basis for the subspace $S(\vz_k):=\dkh{\vw\in \C^n:  \Im(\vw^* \vz_k)=0}$, we  have
\begin{equation} \label{eq:upzkp1}
\vz_{k+1}=\vz_k-U(\vz_k)H^{-1}(\vz_k) \left [ \begin{array}{l} U(\vz_k) \vspace{0.4em} \\ \overline{ U(\vz_k) } \end{array}\right]^* A^*(\vz_k)  F(\vz_k)
\end{equation}
where 
\begin{equation}
H(\vz_k)=\left [ \begin{array}{l} U(\vz_k) \vspace{0.4em} \\ \overline{ U(\vz_k) } \end{array}\right]^* A(\vz_k)^* A(\vz_k) \left [ \begin{array}{l} U(\vz_k) \vspace{0.4em} \\ \overline{ U(\vz_k) } \end{array}\right] \in \R^{(2n-1)\times (2n-1)}.
\end{equation}
Here, $A(\vz_k)$ and $F(\vz_k)$ are given in \eqref{eq:Azk} and \eqref{eq:Fzk}, respectively. For the iteration $\vzl_{k+1}$, the Gauss-Newton update rule for the leave-one-out version is $\vzl_{k+1}=\vzl_{k}+\delta_k^{(l)}$, where
\begin{equation*} 
\delta_k^{(l)}=\mbox{argmin}_{\delta \in \C^n} \quad \norm{ A^{(l)}(\vz_{k}^{(l)}) \left [ \begin{array}{l} \delta \vspace{0.5em}\\ \overline{ \delta } \end{array}\right]  + F^{(l)}(\vz_{k}^{(l)}) }^2  \qquad \mbox{s.t.} \qquad \Im(\delta^* \vz_k^{(l)})=0.
\end{equation*}
One can verify that 
\[
\delta_k^{(l)}e^{i \phi_{\vz_k, \vzl_k}^{\mutual}} = \mbox{argmin}_{\delta \in \C^n} \quad \norm{ A^{(l)}(\tzl_k) \left [ \begin{array}{l} \delta \vspace{0.5em}\\ \overline{ \delta } \end{array}\right]  + F^{(l)}(\tzl_k) }^2  \qquad \mbox{s.t.} \qquad \Im(\delta^* \tzl_k)=0.
\]
Therefore, one has
\begin{equation}    \label{eq:upzklp1}
\vzl_{k+1} e^{i \phi_{\vz_k, \vzl_k}^{\mutual}} =\tzl_k-U(\tzl_k) (\Hl(\tzl_k))^{-1} \left [ \begin{array}{l} U(\tzl_k) \vspace{0.4em} \\ \overline{ U(\tzl_k) } \end{array}\right]^* (\Al(\tzl_k))^*  \Fl(\tzl_k).
\end{equation}
Here, $U(\tzl_k) \in \C^{n\times (2n-1)}$ is a matrix whose columns  form an orthonormal basis for the subspace $S(\tzl_k):=\dkh{\vw\in \C^n:  \Im(\vw^* \tzl_k)=0}$, and
\begin{equation}
\Hl(\tzl_k)=\left [ \begin{array}{l} U(\tzl_k) \vspace{0.4em} \\ \overline{ U(\tzl_k) } \end{array}\right]^* (\Al(\tzl_k))^* \Al(\tzl_k) \left [ \begin{array}{l} U(\tzl_k) \vspace{0.4em} \\ \overline{ U(\tzl_k) } \end{array}\right] \in \R^{(2n-1)\times (2n-1)}
\end{equation}
with $\Al(\tzl_k)$ and $\Fl(\tzl_k)$ are given in \eqref{eq:Azkl} and \eqref{eq:Fzkl}, respectively.  Observe that for any $(2n-1)\times (2n-1)$ real orthogonal matrix $Q$,  the columns of $U(\tzl_k) Q$ also form an orthonormal basis for the subspace $S(\tzl_k)$. Therefore, without loss of generality, we assume 
\begin{equation} \label{eq:UzUlQ}
\norm{\left [ \begin{array}{l} U(\vz_k) \vspace{0.4em} \\ \overline{ U(\vz_k) } \end{array}\right]- \left [ \begin{array}{l} U(\vzl_k) \vspace{0.4em} \\ \overline{ U(\vzl_k) } \end{array}\right]}=\mbox{argmin}_{Q \in \mathcal{O}_{2n-1}} \norm{\left [ \begin{array}{l} U(\vz_k) \vspace{0.4em} \\ \overline{ U(\vz_k) } \end{array}\right]- \left [ \begin{array}{l} U(\vzl_k) \vspace{0.4em} \\ \overline{ U(\vzl_k) } \end{array}\right] Q}.
\end{equation}
For any vector $\vz$, for convenience, we set
\[
\hH(\vz)= \left [ \begin{array}{l} U(\vz) \vspace{0.4em} \\ \overline{ U(\vz) } \end{array}\right] H^{-1}(\vz) \left [ \begin{array}{l} U(\vz) \vspace{0.4em} \\ \overline{ U(\vz) } \end{array}\right]^*
\]
and 
\[
\hH^{(l)}(\vz)= \left [ \begin{array}{l} U(\vz) \vspace{0.4em} \\ \overline{ U(\vz) } \end{array}\right] (H^{(l)}(\vz))^{-1} \left [ \begin{array}{l} U(\vz) \vspace{0.4em} \\ \overline{ U(\vz) } \end{array}\right]^*.
\]
Putting \eqref{eq:upzkp1} and \eqref{eq:upzklp1} into \eqref{eq:diszklp1}, we have
\begin{eqnarray} 
&&  \sqrt2\cdot \norm{\vz_{k+1}- \vzl_{k+1} e^{i \phi_{\vz_k, \vzl_k}^{\mutual}}}  \label{eq:zkl00}\\
&=&\norm{ \left [ \begin{array}{l} \vz_k-\tzl_k \vspace{0.4em} \\ \overline{ \vz_k-\tzl_k } \end{array}\right]  - \hH(\vz_k) A^*(\vz_k)  F(\vz_k)+  \hHl(\tzl_k) (\Al(\tzl_k))^*  \Fl(\tzl_k)} \notag\\
&\le & \underbrace{\norm{ \left [ \begin{array}{l} \vz_k-\tzl_k \vspace{0.4em} \\ \overline{ \vz_k-\tzl_k } \end{array}\right]  - \hH(\vz_k) A^*(\vz_k)  \Big(F(\vz_k)- F(\tzl_k)\Big) }}_{:=I_1}  \notag\\
&& +\underbrace{\norm{\hH(\vz_k) A^*(\vz_k)  F(\tzl_k)- \hH(\vzl_k)   A^*(\tzl_k)  F(\tzl_k)  } }_{:=I_2} \notag\\
&& +\underbrace{\norm{\hH(\vzl_k)   A^*(\tzl_k)  F(\tzl_k) - \hHl(\tzl_k) (\Al(\tzl_k))^*  \Fl(\tzl_k) }}_{:=I_3}. \notag
\end{eqnarray}
We will apply different strategies when upper bounding the terms $I_1, I_2$ and $I_2$, with their bounds given in the following three lemmas under hypotheses \eqref{eq:hypoind}.

\begin{lemma}  \label{le:zkl01}
Under the conditions in Lemma \ref{le:diszkl}, with probability at least $1-O(m^{-10})$, one has
\begin{equation*} 
I_1 \le  \frac{\sqrt2}{10} \norm{\vz_k-\tzl_k},
\end{equation*}
provided $m\ge C_0 n\log^3 m$ for some universal constant $C_0>0$.
\end{lemma}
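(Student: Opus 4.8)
The plan is to reuse the error-contraction calculation from the proof of Lemma~\ref{le:conun1}, with the rotated leave-one-out iterate $\tzl_k$ now playing the role that $\vxs e^{i\phi(\vz_k)}$ played there. Throughout, abbreviate $\vw:=\vz_k-\tzl_k$ and $V_k:=\big[\,U(\vz_k);\,\overline{U(\vz_k)}\,\big]\in\C^{2n\times(2n-1)}$, so that $\hH(\vz_k)=V_kH^{-1}(\vz_k)V_k^*$. Two structural facts set up the argument. First, since $\tzl_k=\vzl_k e^{i\phi^{\mutual}_{\vz_k,\vzl_k}}$ with the mutual phase minimizing $\norm{\vz_k-\vzl_k e^{i\phi}}$, first-order optimality forces $\Im(\vz_k^*\tzl_k)=0$, hence $\Im(\vw^*\vz_k)=0$, i.e.\ $\vw\in S(\vz_k)$; exactly as in Lemma~\ref{le:conun1} this gives $\big[\vw;\overline{\vw}\big]=\tfrac12 V_kV_k^*\big[\vw;\overline{\vw}\big]$, and since $\vz_k$ obeys the incoherence bound \eqref{eq:hypaftb} (so $H(\vz_k)$ is invertible by Lemma~\ref{le:Hzlowup}), a short computation using $H^{-1}(\vz_k)H(\vz_k)=I_{2n-1}$ yields $\hH(\vz_k)A^*(\vz_k)A(\vz_k)\big[\vw;\overline{\vw}\big]=\big[\vw;\overline{\vw}\big]$. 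Second, since each $F_j$ is a real Wirtinger quadratic and the $y_j$'s cancel in the difference, the secant identity $F(\vz_k)-F(\tzl_k)=\tfrac12\big(A(\vz_k)+A(\tzl_k)\big)\big[\vw;\overline{\vw}\big]$ holds exactly.

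Substituting these two identities into the definition of $I_1$ and using additivity of $A(\cdot)$ from \eqref{eq:Azk} (so $A(\vz_k)-A(\tzl_k)=A(\vw)$), the term collapses to
\[
I_1=\tfrac12\,\norm{\hH(\vz_k)A^*(\vz_k)\,A(\vw)\,\big[\vw;\overline{\vw}\big]}\ \le\ \tfrac{\sqrt2}{2}\,\norm{\hH(\vz_k)A^*(\vz_k)}\,\norm{A(\vw)}\,\norm{\vw}.
\]
Since $\hH(\vz_k)A^*(\vz_k)=V_k\big(A(\vz_k)V_k\big)^{\dag}$ with $\norm{V_k}=\sqrt2$ and $\big(A(\vz_k)V_k\big)^*\big(A(\vz_k)V_k\big)=H(\vz_k)\succeq 1.9\norm{\vz_k}^2 I_{2n-1}$, one gets $\norm{\hH(\vz_k)A^*(\vz_k)}\le \sqrt2/\big(\sqrt{1.9}\,\norm{\vz_k}\big)$, hence $I_1\le \norm{A(\vw)}\,\norm{\vw}/\big(\sqrt{1.9}\,\norm{\vz_k}\big)$.

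It remains to show $\norm{A(\vw)}$ is a small constant. The Gram matrix $A(\vw)^*A(\vw)$ is the $2\times 2$ block matrix with diagonal block $\tfrac1m\sum_j\abs{\va_j^*\vw}^2\va_j\va_j^*$ and off-diagonal block $\tfrac1m\sum_j(\va_j^*\vw)^2\va_j\va_j^\T$, so $\norm{A(\vw)}^2$ is bounded by the sum of the spectral norms of those two matrices. By \eqref{eq:hypoindb}, $\norm{\vw}\le C_2\sqrt{\log m/m}<1$; and by the triangle inequality with \eqref{eq:hypaftb}--\eqref{eq:hypaftd}, $\max_j\abs{\va_j^*\vw}\le \max_j\abs{\va_j^*\vz_k}+\max_j\abs{\va_j^*\tzl_k}\lesssim\sqrt{\log m}$. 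Thus $\vw$ lies in the regime of the uniform bound Lemma~\ref{le:contra2}, which gives $\norm{A(\vw)}^2\le 4\norm{\vw}^2+c_0'\sqrt{n\log^3 m/m}$ with probability at least $1-O(m^{-10})$; the right-hand side is below any prescribed constant once $m\ge C_0 n\log^3 m$ with $C_0$ large. Combined with $\norm{\vz_k}\ge 0.99$ from \eqref{eq:hypafta}, this yields $I_1\le \tfrac{\sqrt2}{10}\norm{\vz_k-\tzl_k}$ for $C_0$ sufficiently large, which is the claim.

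The delicate point is this last step: although $\vw=\vz_k-\tzl_k$ is tiny in Euclidean norm, one must still certify its near-independence $\max_j\abs{\va_j^*\vw}\lesssim\sqrt{\log m}$ (via the induction hypotheses) before the heavy-tailed concentration Lemma~\ref{le:contra2} applies, and one must verify that the residual it produces is an \emph{additive} $O(\sqrt{n\log^3 m/m})$ rather than a multiple of $\norm{\vw}^2$; otherwise the contraction factor $\sqrt2/10<1$ would be unattainable. The rest is a transcription of the Lemma~\ref{le:conun1} argument, streamlined by the exact secant identity available for the quadratic $F$.
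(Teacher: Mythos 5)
Your proof is correct and follows essentially the same route as the paper: both start from the projection identities $[\vw;\overline{\vw}]=\tfrac12 V_kV_k^*[\vw;\overline{\vw}]$ and $A^*(\vz_k)=\tfrac12 V_kV_k^*A^*(\vz_k)$, reduce $I_1$ to bounding $\hH(\vz_k)A^*(\vz_k)$ applied to the Taylor residual, and then control $\norm{A(\vw)}$ through the truncated uniform concentration of Lemma~\ref{le:contra2} after verifying $\norm{\vw}\lesssim\sqrt{\log m/m}$ and $\max_j|\va_j^*\vw|\lesssim\sqrt{\log m}$ from the induction hypotheses. The only (cosmetic) differences are that you replace the paper's fundamental-theorem-of-calculus form $\int_0^1 A(\vz_\tau)\,d\tau$ by the algebraically identical exact secant $\tfrac12(A(\vz_k)+A(\tzl_k))$ (both hold exactly because $A(\cdot)$ is linear in $\vz$), and you bound $\norm{\hH(\vz_k)A^*(\vz_k)}=\norm{V_k(A(\vz_k)V_k)^{\dag}}$ as a single product, which gives a slightly sharper constant than the paper's $\norm{\hH(\vz_k)}\cdot\norm{A^*(\vz_k)}$.
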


\begin{lemma} \label{le:zkl02}
Under the conditions in Lemma \ref{le:diszkl}, with probability at least $1-O(m^{-10})$, one has
\begin{equation*}  
I_2 \le \xkh{ 536 \delta+ O\xkh{\sqrt{\frac{\log m} n} } +O\xkh{\sqrt{\frac{n\log^3m}{m}} }+ O\xkh{\sqrt[4 ]{\frac{n\log m}{m} }} \xkh{\lambda+\sqrt{\log m}}}\norm{\vz_k-\tzl_k} ,
\end{equation*}
provided $m\ge C_0 \xkh{\lambda^2+\log^2 m}n\log m$ for some universal constant $C_0>0$.
Here, $\delta$ is the constant given in hypothesis \eqref{eq:hypoinda}, and $\lambda$ is defined in \eqref{eq:lambda}.
\end{lemma}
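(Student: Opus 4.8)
The plan is to read $I_2$ as the change of the projected Gauss--Newton operator $\vz\mapsto\hH(\vz)A^*(\vz)$ as the base point moves from $\vz_k$ to $\vzl_k$ (equivalently to $\tzl_k$), evaluated against the \emph{fixed} residual $F(\tzl_k)$. Since $\norm{\vz_k-\tzl_k}=\dist(\vz_k,\vzl_k)$ is already tiny by \eqref{eq:hypoindb}, the real content is to certify that the effective Lipschitz constant of this operator, when it acts on a vector of the special form $F(\tzl_k)$, is $O(\delta)$ and not the heavy-tailed $O(\sqrt n)$. First I would split $I_2\le I_{2,a}+I_{2,b}$ with
\[
I_{2,a}=\norm{\hH(\vz_k)\bigl(A^*(\vz_k)-A^*(\tzl_k)\bigr)F(\tzl_k)},\qquad I_{2,b}=\norm{\bigl(\hH(\vz_k)-\hH(\vzl_k)\bigr)A^*(\tzl_k)F(\tzl_k)},
\]
and record two facts feeding both terms: (i) combining \eqref{eq:hypoindb}--\eqref{eq:hypoindc} with $\max_{j}\norm{\va_j}\le\sqrt{6n}$ (fact \eqref{eq:maallaj}) gives $\max_{j}\abs{\va_j^*(\vz_k-\tzl_k)}\lesssim\sqrt{\log m}$ --- since $\sqrt n\cdot C_2\sqrt{\log m/m}\lesssim\sqrt{\log m}$ once $m\gtrsim n\log^3m$ --- which is precisely the ``near-incoherence'' hypothesis under which Lemmas \ref{le:Hzlowup} and \ref{le:contra2} become available at the difference vector $\vz_k-\tzl_k$; (ii) $\dist(\tzl_k,\vxs)\le\dist(\vz_k,\vxs)+\norm{\vz_k-\tzl_k}\le2\delta$, and, by the fundamental theorem of calculus exactly as in the proof of Lemma \ref{le:conun1}, $F(\tzl_k)=\bigl(\int_0^1 A(\vz_\tau)\,d\tau\bigr)\bigl[\ve;\overline{\ve}\bigr]$ with $\ve=\tzl_k-\vxs e^{i\phi(\tzl_k)}$, $\vz_\tau=\vxs e^{i\phi(\tzl_k)}+\tau\ve$ (so $\vz_\tau$ is incoherent for every $\tau\in[0,1]$ and $\norm{\ve}=\dist(\tzl_k,\vxs)$).

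For $I_{2,a}$ I would use the $\R$-linearity of $A(\cdot)$ to write $A^*(\vz_k)-A^*(\tzl_k)=A^*(\vz_k-\tzl_k)$. Lemma \ref{le:Hzlowup} then bounds $\norm{\hH(\vz_k)}\le(1.9\norm{\vz_k}^2)^{-1}\lesssim1$, while Lemma \ref{le:contra2} applied to the incoherent vector $\vz_k-\tzl_k$ from (i) gives $\norm{A(\vz_k-\tzl_k)}\le2\norm{\vz_k-\tzl_k}+O\bigl((n\log^3m/m)^{1/4}\bigr)$. Feeding in the expansion of $F(\tzl_k)$ from (ii) and using $A^*(\vz_\tau)A(\vz_\tau)\preceq5\norm{\vz_\tau}^2I$ (Lemma \ref{le:Hzlowup}) to turn $\norm{F(\tzl_k)}$ into an $O(\delta)$ factor produces a bound of the advertised shape. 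The $\lambda$-dependent term surfaces here because, to keep the leading constant proportional to $\delta$ rather than $O(1)$, one isolates the part of $F(\tzl_k)$ that should be paired with the Gauss--Newton step difference $\delta_k-\delta_k^{(l)}e^{i\phi_{\vz_k,\vzl_k}^{\mutual}}$, a quantity of norm $\lesssim(\lambda+\sqrt{\log m})/\sqrt m$ by the induction hypotheses and the definition of $\lambda$; feeding this step-difference vector through Lemma \ref{le:contra2} contributes the factor $O\bigl((n\log m/m)^{1/4}\bigr)(\lambda+\sqrt{\log m})$, and the apparent self-reference through $\lambda$ is harmless, being resolved when $I_1,I_2,I_3$ are combined in the proof of Lemma \ref{le:diszkl}.

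The genuinely hard piece is $I_{2,b}$, the perturbation of $\hH$ itself, where the second-order nature of the method bites. I would first recast $\hH(\vz)$ in a basis-free way as the Moore--Penrose pseudo-inverse $\hH(\vz)=\bigl(P(\vz)\,A^*(\vz)A(\vz)\,P(\vz)\bigr)^{\dag}$, where $P(\vz)$ is the rank-$(2n-1)$ orthogonal projection onto the lift of $S(\vz)$ --- equivalently $I_{2n}$ minus the rank-one projection onto the stacked trivial direction $\bigl[i\vz;-i\overline{\vz}\bigr]$ --- and one checks that $\frac12\bigl[U(\vz);\overline{U(\vz)}\bigr]\bigl[U(\vz);\overline{U(\vz)}\bigr]^*=P(\vz)$, so $\hH$ does not depend on the choice of the orthonormal basis $U(\vz)$. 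Then I would apply the standard perturbation identities for the Moore--Penrose pseudo-inverse (cf.\ \cite{Groetsch}) to reduce $\norm{\hH(\vz_k)-\hH(\vzl_k)}$ to: (a) the projection perturbation $\norm{P(\vz_k)-P(\vzl_k)}\lesssim\norm{\vz_k-\vzl_k}$, immediate from the closed form of $P$; (b) the bilinear perturbation $A^*(\vz_k)A(\vz_k)-A^*(\vzl_k)A(\vzl_k)=A^*(\vz_k-\vzl_k)A(\vz_k)+A^*(\vzl_k)A(\vz_k-\vzl_k)$, each summand controlled by Lemma \ref{le:contra}/Lemma \ref{le:contra2} at the incoherent difference together with Lemma \ref{le:Hzlowup} at $\vz_k,\vzl_k$; and (c) the stability of the inverse, guaranteed by $H(\vzl_k)\succeq1.9\norm{\vzl_k}^2I$ (Lemma \ref{le:Hzlowup}). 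Multiplying the resulting bound against $\norm{A^*(\tzl_k)F(\tzl_k)}=O(\delta)$ --- which follows, again via the calculus identity of (ii) and $A^*A\preceq5\norm{\cdot}^2I$, from $A^*(\tzl_k)F(\tzl_k)=A^*(\tzl_k)\bigl(\int_0^1 A(\vz_\tau)\,d\tau\bigr)\bigl[\ve;\overline{\ve}\bigr]$ --- produces the $\delta$-proportional main term of $I_{2,b}$ (the absolute constant, written $536$ in the statement, being the product of the $O(1)$ constants above), while the $O(\sqrt{\log m/n})$ and $O(\sqrt{n\log^3m/m})$ remainders come from the concentration residuals. I expect this to be the crux: one must verify that moving between the two distinct $(2n-1)$-dimensional real subspaces $S(\vz_k)$ and $S(\vzl_k)$ --- which are $O(\norm{\vz_k-\vzl_k})$-close but not equal --- does not inflate the pseudo-inverse perturbation (the rank stays constant and the resolvent identity remains valid), and, above all, that no uncancelled heavy-tailed $\sqrt n$ factor survives, which works only because every occurrence of $A(\cdot)$ at a \emph{difference} vector sits at a near-incoherent vector, so Lemma \ref{le:contra2} --- rather than a crude operator-norm estimate --- is always applicable.

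Finally I would assemble $I_{2,a}$ and $I_{2,b}$, substitute $\norm{\vz_k-\tzl_k}=\dist(\vz_k,\vzl_k)$ (using $\dist(\vz_k,\vzl_k)\le C_2\sqrt{\log m/m}$ wherever a factor $\sqrt m$ stands against it) and collect numerical constants; the sample condition $m\ge C_0(\lambda^2+\log^2 m)n\log m$ is exactly what makes all the invoked concentration events (Lemmas \ref{le:Hzlowup}, \ref{le:contra}, \ref{le:contra2}) hold simultaneously --- the $\log^2 m$ for the incoherence level of $\vz_k-\tzl_k$ and of $\vz_\tau$, the $\lambda^2$ for that of the step-differences --- and renders all remainder terms $o(1)$.
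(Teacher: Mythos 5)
Your decomposition $I_2\le I_{2,a}+I_{2,b}$ by the Leibniz rule (perturbing $A^*$, then perturbing $\hH$) is a genuinely different route from the paper's. The paper writes $I_2$ as $\bigl[U(\vz_k);\overline{U(\vz_k)}\bigr]\tA(\vz_k)^\dag F(\tzl_k)-\bigl[U(\tzl_k);\overline{U(\tzl_k)}\bigr]\tA(\tzl_k)^\dag F(\tzl_k)$, peels off the orthonormal-basis change as $\beta_1$ (controlled by the subspace-perturbation Lemma \ref{le:subper}), and then handles $\beta_2=\norm{\tA(\vz_k)^\dag F-\tA(\tzl_k)^\dag F}$ via Wedin's decomposition theorem for pseudo-inverses (Lemma \ref{le:psudecom}), which yields three pieces $\beta_{21},\beta_{22},\beta_{23}$. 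Your basis-free reformulation $\hH(\vz)=\bigl(P(\vz)A^*(\vz)A(\vz)P(\vz)\bigr)^\dag$ and Leibniz split is cleaner in principle and sidesteps the choice \eqref{eq:UzUlQ}, but it trades the nullspace-free Wedin decomposition (for $\tA$, whose nullspace is trivial) for a perturbation of a genuinely rank-deficient square pseudo-inverse, so the third Wedin term $P_{N(\cdot)}(\cdot)^*(\cdot)^\dag$ would not vanish for you.

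The concrete gap is in your bound for $I_{2,a}$. You propose
\[
I_{2,a}\le\norm{\hH(\vz_k)}\cdot\norm{A^*(\vz_k-\tzl_k)}\cdot\norm{F(\tzl_k)},\qquad \norm{A(\vz_k-\tzl_k)}\le 2\norm{\vz_k-\tzl_k}+O\!\bigl((n\log^3m/m)^{1/4}\bigr),
\]
but the additive remainder in $\norm{A(\vz_k-\tzl_k)}$ does not shrink with $\norm{\vz_k-\tzl_k}$: Lemma \ref{le:contra2} gives a fixed concentration residual $c_0\beta^2\sqrt{n\log m/m}$, and in the present regime $\norm{\vz_k-\tzl_k}\lesssim\sqrt{\log m/m}$ is vastly smaller than $(n\log^3m/m)^{1/4}$ once $m\asymp n\log^3 n$. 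So this operator-norm factoring produces a term of order $\delta\,(n\log^3m/m)^{1/4}$ that is \emph{not} of the advertised shape $(\cdot)\,\norm{\vz_k-\tzl_k}$, and it would wreck the self-consistency argument for $\lambda$ in the proof of Lemma \ref{le:diszkl}. The fix, which is exactly what the paper does for the analogous piece $\theta_2$ inside $\beta_{23}$, is to resist the crude triangle inequality and exploit the algebra first: using the rank-one structure of $A$,
\[
\bigl(A^*(\vz_k)-A^*(\tzl_k)\bigr)F(\tzl_k)=\frac1m\sum_{j=1}^m\bigl(\abs{\va_j^*\tzl_k}^2-\abs{\va_j^*\vxs}^2\bigr)\left[\begin{array}{c}\va_j\va_j^*(\vz_k-\tzl_k)\\\overline{\va_j\va_j^*(\vz_k-\tzl_k)}\end{array}\right],
\]
so the factor $\vz_k-\tzl_k$ can be pulled out \emph{before} taking norms, leaving the matrix $\frac1m\sum_j F_j(\tzl_k)\va_j\va_j^*$ whose operator norm is $O(\delta)+O(\sqrt{n\log^3m/m})$ by Lemma \ref{le:contra} and $\dist(\tzl_k,\vxs)\lesssim\delta$. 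That delivers $I_{2,a}\lesssim\bigl(\delta+\sqrt{n\log^3m/m}\bigr)\norm{\vz_k-\tzl_k}$ as required. Relatedly, the $\lambda$-dependent contribution does not naturally surface in $I_{2,a}$ (which is $\lambda$-free in the paper's $\theta_2$); it arises when the Gauss--Newton direction $U(\vz_k)\tA(\vz_k)^\dag F(\tzl_k)=\vz_k-\vz_{k+1}+\text{small}$ appears and is hit by $A(\tzl_k)-A(\vz_k)$, which in the paper happens inside $\beta_{21}$ and $\beta_{22}$ (the Wedin terms), and in your decomposition would live inside $I_{2,b}$; the bound $\max_j\abs{\va_j^*\vv}\lesssim\lambda+\sqrt{\log m}$ on that direction is the precise mechanism and is worth making explicit.
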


\begin{lemma} \label{le:zkl03}
Under the conditions in Lemma \ref{le:diszkl}, with probability at least $1-O(m^{-10})-O(me^{-1.5n})$, one has
\begin{equation*} 
I_3 \le O\xkh{\frac{\sqrt{n \log^3m}}{m}},
\end{equation*}
provided $m\ge C_0 n\log^3 m$ for some universal constant $C_0>0$.
\end{lemma}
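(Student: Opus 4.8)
The term $I_3$ compares the full-data Gauss-Newton search direction with the leave-one-out search direction at the \emph{same} base point $\tzl_k$, so it is a one-sample perturbation and should be $O(1/m)$ up to dimensional factors. Let $\bm{r}_j(\vz)$ denote the $j$-th column of $\sqrt m\,A(\vz)^*$, so that $A(\vz)^*F(\vz)=\tfrac1m\sum_j F_j(\vz)\bm{r}_j(\vz)$, $A(\vz)^*A(\vz)=\tfrac1m\sum_j\bm{r}_j(\vz)\bm{r}_j(\vz)^*$, $\norm{\bm{r}_j(\vz)}=\sqrt2\,\norm{\va_j}\,\abs{\va_j^*\vz}$, and $\bm{r}_j(\vz)$ has conjugate-symmetric block form (its lower half is the conjugate of its upper half). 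Since $A(\tzl_k)^*F(\tzl_k)-(\Al(\tzl_k))^*\Fl(\tzl_k)=\tfrac1m F_l(\tzl_k)\bm{r}_l(\tzl_k)$, I would split
\[
I_3\ \le\ \underbrace{\norm{\hH(\tzl_k)\,\tfrac1m F_l(\tzl_k)\bm{r}_l(\tzl_k)}}_{=:I_3^{(1)}}\ +\ \underbrace{\norm{\big(\hH(\tzl_k)-\hHl(\tzl_k)\big)(\Al(\tzl_k))^*\Fl(\tzl_k)}}_{=:I_3^{(2)}}.
\]
The key probabilistic input is that $\tzl_k=\vzl_k e^{i\phi_{\vz_k,\vzl_k}^{\mutual}}$ depends on $\va_l$ only through an overall phase, while $\vzl_k$ is independent of $\va_l$; hence $\abs{\va_l^*\tzl_k}=\abs{\va_l^*\vzl_k}\lesssim\sqrt{\log m}$ for all $l$ (Gaussian concentration and a union bound), $\norm{\va_l}\le\sqrt{6n}$ for all $l$ by \eqref{eq:maallaj}, and $\abs{F_l(\tzl_k)}=\big|\abs{\va_l^*\tzl_k}^2-\abs{\va_l^*\vxs}^2\big|\lesssim\log m$ using \eqref{eq:inajxs}. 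Moreover $\tzl_k$ inherits the near-incoherence of $\vz_k$ from hypotheses \eqref{eq:hypoind}, so Lemma \ref{le:Hzlowup} — and its leave-one-out analogue, as deleting one rank-one term perturbs $H$ by $O(n\log m/m)=o(1)$ — yield $\norm{\hH(\tzl_k)},\norm{\hHl(\tzl_k)}\lesssim1$ and $\norm{A(\tzl_k)}\lesssim1$.

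With these bounds in hand, $I_3^{(1)}$ is immediate: $I_3^{(1)}\lesssim\tfrac1m\abs{F_l(\tzl_k)}\norm{\bm{r}_l(\tzl_k)}\lesssim\tfrac1m\cdot\log m\cdot\sqrt n\cdot\sqrt{\log m}=O(\sqrt{n\log^3m}/m)$. The term $I_3^{(2)}$ is the crux. Writing $V=[U(\tzl_k);\overline{U(\tzl_k)}]$, the Gauss-Newton matrices $H(\tzl_k)$ and $\Hl(\tzl_k)$ differ by the single rank-one term $\tfrac1m(V^*\bm{r}_l(\tzl_k))(V^*\bm{r}_l(\tzl_k))^*$, so the resolvent identity gives, for $\vw:=(\Al(\tzl_k))^*\Fl(\tzl_k)$,
\[
\big(\hH(\tzl_k)-\hHl(\tzl_k)\big)\vw\ =\ -\tfrac1m\big(VH^{-1}(\tzl_k)V^*\bm{r}_l(\tzl_k)\big)\cdot\big(\bm{r}_l(\tzl_k)^*\,\hHl(\tzl_k)\vw\big),
\]
where the first (vector) factor has norm $\lesssim\norm{\bm{r}_l(\tzl_k)}\lesssim\sqrt{n\log m}$. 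The naive estimate $\abs{\bm{r}_l(\tzl_k)^*\hHl(\tzl_k)\vw}\le\norm{\bm{r}_l(\tzl_k)}\norm{\hHl(\tzl_k)\vw}\lesssim\sqrt{n\log m}\cdot\delta$ would cost a spurious factor $\sqrt n$. I would avoid this by phase-equivariance: $\hHl(\tzl_k)\vw=D_\psi\,\hHl(\vzl_k)(\Al(\vzl_k))^*\Fl(\vzl_k)$ with $D_\psi=\diag(e^{i\psi}I_n,e^{-i\psi}I_n)$ and $\psi=\phi_{\vz_k,\vzl_k}^{\mutual}$, whose last factor is \emph{independent of $\va_l$} and has conjugate-symmetric form $[\bm{p}_0;\overline{\bm{p}_0}]$ with $\norm{\bm{p}_0}\lesssim\norm{\Fl(\tzl_k)}\lesssim\dist(\tzl_k,\vxs)\lesssim\delta$. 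Using also the conjugate-symmetric block form of $\bm{r}_l(\tzl_k)$, the phases cancel and
\[
\bm{r}_l(\tzl_k)^*\,\hHl(\tzl_k)\vw\ =\ 2\Re\!\big[\,\overline{\va_l^*\vzl_k}\;(\va_l^*\bm{p}_0)\,\big],
\]
whose modulus is at most $2\,\abs{\va_l^*\vzl_k}\,\abs{\va_l^*\bm{p}_0}$. Since both $\vzl_k$ and $\bm{p}_0$ are independent of $\va_l$, Gaussian concentration (union over $l$) gives $\abs{\va_l^*\vzl_k}\lesssim\sqrt{\log m}$ and $\abs{\va_l^*\bm{p}_0}\lesssim\sqrt{\log m}\,\norm{\bm{p}_0}\lesssim\delta\sqrt{\log m}$, whence $I_3^{(2)}\lesssim\tfrac1m\cdot\sqrt{n\log m}\cdot\delta\log m=O(\sqrt{n\log^3m}/m)$. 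Adding the two pieces proves the lemma, and the failure probability $O(m^{-10})+O(me^{-1.5n})$ is the union of the concentration events above and Lemma \ref{le:Hzlowup} (the $O(me^{-1.5n})$ term coming only from $\max_l\norm{\va_l}\le\sqrt{6n}$).

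The main obstacle is $I_3^{(2)}$: a one-sample change forces a comparison of two matrix inverses, and a direct operator-norm bound overshoots by a factor $\sqrt n$. Resolving this needs two ingredients in combination — first quotienting out the alignment phase $\psi$ so that the relevant vector becomes genuinely $\va_l$-independent, and then exploiting the conjugate-symmetric block structure of both $(\Al)^*\Fl$ and $\hHl$ to reduce the offending quadratic form in $\va_l$ to a product of two scalar complex-Gaussian linear forms, each of which concentrates at the $\sqrt{\log m}$ scale. This is precisely the "perturbation of the inverse of a matrix" difficulty highlighted in the introduction as what distinguishes the leave-one-out analysis of a second-order method from the first-order case.
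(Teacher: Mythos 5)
Your proposal is correct and follows essentially the same route as the paper: both split $I_3$ using the rank-one perturbation structure of $\tA^*\tA-(\tAl)^*\tAl$ and $\tA^*F-(\tAl)^*\Fl$, apply Sherman--Morrison (your resolvent identity is equivalent for a rank-one update), and avoid the spurious $\sqrt n$ by exploiting the $\va_l$-independence of the leave-one-out Gauss--Newton direction, reducing the key cross term (the paper's $\vu_l^*\vv^{(l)}$, your $\bm{r}_l^*\hHl\vw$) to a product of two scalar complex-Gaussian linear forms, each of size $O(\sqrt{\log m})$. Your explicit phase-equivariance step, pulling the alignment phase out as $D_\psi$ so that the remaining vector is genuinely $\va_l$-independent, is a more careful rendering of the paper's terse appeal to the ``statistical independence between $\vv^{(l)}$, $\vzl_k$ and $\va_l$'', but the underlying argument is identical.
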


Putting the bounds in Lemma \ref{le:zkl01}, Lemma \ref{le:zkl02} and Lemma \ref{le:zkl03} into \eqref{eq:zkl00}, together with the definition of $\lambda$ in \eqref{eq:lambda}, one has
\begin{eqnarray*}
\frac{\lambda}{\sqrt m} &=& \dist(\vz_{k+1}, \tzl_{k+1}) \\
&\le &  \xkh{0.1+380 \delta+ O\xkh{ \sqrt[4]{\frac{n\log m}{m}} } \lambda+O\xkh{ \sqrt[4]{\frac{n\log^3m}{m}}}} \norm{\vz_k-\tzl_k} + O\xkh{\frac{\sqrt{n \log^3m}}{m}}.
\end{eqnarray*}
Taking $\delta \le 1/500$, it immediately gives
\begin{eqnarray*}
\lambda &\le  & \frac{\sqrt m \xkh{0.1+380 \delta+ O\xkh{ \sqrt[4]{\frac{n\log^3m}{m}}}} \norm{\vz_k-\tzl_k} + \sqrt n \cdot O\xkh{\frac{\sqrt{n \log^3m}}{m}}}{1-\sqrt m \cdot O\xkh{ \sqrt[4]{\frac{n\log m}{m}} }\cdot \norm{\vz_k-\tzl_k}} \\
&\le & C_2 \sqrt{\log m},
\end{eqnarray*}
provided $m\ge C_0 n\log^3 m$ for a sufficiently large constant $C_0>0$. Here, the last inequality arises from the hypothesis \eqref{eq:hypoindb} that $\norm{\vz_k-\tzl_k} \le C_2\sqrt{\frac{\log m}{m}}$.
This completes the proof.

\end{proof}

\subsection{Proof of Lemma \ref{le:zkl01}}
\hfill\\

 In terms of $I_1$, by the definition of $\tzl_k$, we have $\Im((\vz_k-\tzl_k)^* \vz_k)=0$.  Recall that $U(\vz_k) \in \C^{n\times (2n-1)}$ is a matrix whose columns forms an orthonormal basis for the subspace $S(\vz_k):=\dkh{\vw\in \C^n:  \Im(\vw^* \vz_k)=0}$.
Therefore, 
\begin{equation} \label{eq:zkl11}
\left [ \begin{array}{l} \vz_k-\tzl_k \vspace{0.4em} \\ \overline{\vz_k-\tzl_k } \end{array}\right]=\frac12  \left [ \begin{array}{l} U(\vz_k) \vspace{0.4em} \\ \overline{ U(\vz_k) } \end{array}\right] \left [ \begin{array}{l} U(\vz_k) \vspace{0.4em} \\ \overline{ U(\vz_k) } \end{array}\right]^* \left [ \begin{array}{l} \vz_k-\tzl_k \vspace{0.4em} \\ \overline{ \vz_k-\tzl_k } \end{array}\right].
\end{equation}
Similarly, one can easily verify that
\begin{equation} \label{eq:idenAk1}
A^*(\vz_k) = \frac12  \left [ \begin{array}{l} U(\vz_k) \vspace{0.4em} \\ \overline{ U(\vz_k) } \end{array}\right] \left [ \begin{array}{l} U(\vz_k) \vspace{0.4em} \\ \overline{ U(\vz_k) } \end{array}\right]^*  A^*(\vz_k).
\end{equation}
It then gives
\begin{eqnarray}
I_1& = & \norm{ \hH(\vz_k) A^*(\vz_k)   \xkh{ A(\vz_k)  \left [ \begin{array}{l} \vz_k-\tzl_k \vspace{0.4em} \\ \overline{\vz_k-\tzl_k } \end{array}\right] -  \Big(F(\vz_k)- F(\tzl_k)\Big) } } \notag\\
&\le & \norm{ \hH(\vz_k)  } \norm{A^*(\vz_k)  } \norm{ A(\vz_k)   \left [ \begin{array}{l} \vz_k-\tzl_k \vspace{0.4em} \\ \overline{\vz_k-\tzl_k } \end{array}\right] -  \Big(F(\vz_k)- F(\tzl_k)\Big) } \notag \\
&=&  \norm{ \hH(\vz_k)  } \norm{A^*(\vz_k)  } \norm{  \int_0^1 \Big(A(\vz_k)- A(\vz_\tau)  \Big) ~d\tau \cdot \left [ \begin{array}{l} \vz_k-\tzl_k \vspace{0.4em} \\ \overline{\vz_k-\tzl_k } \end{array}\right] } \label{eq:up31},
\end{eqnarray}
where the first equality comes from \eqref{eq:zkl11} and the identity 
\[
H^{-1}(\vz_k) \left [ \begin{array}{l} U(\vz_k) \vspace{0.4em} \\ \overline{ U(\vz_k) } \end{array}\right]^* A^*(\vz_k) A(\vz_k) \left [ \begin{array}{l} U(\vz_k) \vspace{0.4em} \\ \overline{ U(\vz_k) } \end{array}\right]= H^{-1}(\vz_k) H(\vz_k) = I_{2n-1},
\]
and the last equality follows from the fundamental theorem of calculus. Here,  we denote $\vz_\tau=\tzl_k + \tau(\vz_k- \tzl_k ), 0\le \tau\le 1$. Recall the fact \eqref{eq:hypaftb} that $\max_{1\le j\le m} ~\abs{\va_j^* \vz_k} \lesssim \sqrt{\log m} \norm{\vz_k}$. 
 Applying Lemma \ref{le:Hzlowup}, with probability at least $1-O(m^{-10})$, we have
\begin{equation} \label{eq:Hinvaz1}
\norm{\hH(\vz_k)} \le 2\norm{ H^{-1}(\vz_k)  }  \le  \frac{20}{19\norm{\vz_k}^2}  \qquad \mbox{and} \qquad        \norm{A^*(\vz_k)  } \le \sqrt5 \norm{\vz_k}.
\end{equation}
 Furthermore, we have
\begin{eqnarray}
\max_{1\le j\le m} ~\abs{\va_j^* (\vz_k-\vz_\tau)}&=& (1-\tau) \max_{1\le j\le m} ~\abs{\va_j^* (\vz_k-\tzl_k)} \notag \\
&\le &  (1-\tau)  \cdot \max_{1\le j\le m} ~\norm{\va_j} \norm{\vz_k-\tzl_k} \notag \\
&\le&   \sqrt {6n} \cdot C_2 \sqrt{\frac{\log m} m} \notag \\
&\lesssim & \sqrt { \log m} \label{eq:ajztau}
\end{eqnarray}
for all $0\le \tau \le 1$. Here, the second inequality comes from the hypothesis  \eqref{eq:hypoindb}. Similarly, one can verify that for all $0\le \tau \le 1$, it holds
\begin{equation} \label{eq:ajztau1}
 \norm{\vz_k-\vz_\tau}\le \norm{\vz_k-\tzl_k}  \le  C_2 \sqrt{\frac{\log m} n} \le c_1,
\end{equation}
where $c_1>0$ is a sufficient small constant. Armed with the bounds \eqref{eq:ajztau} and \eqref{eq:ajztau1}, using the same argument as \eqref{eq:LipAz}, one can show that with probability at least $1-O(m^{-10})$, it holds
\begin{equation} \label{eq:LipAz1}
\norm{A(\vz_k)-A(\vz_\tau) } \le 2(1-\tau)\cdot \norm{\vz_k-\tzl_k}+ \frac{1}{25},
\end{equation}
provided $m\ge C_0 n\log^3m$ for some universal constant $C_0>0$.  Putting \eqref{eq:Hinvaz1} and \eqref{eq:LipAz1} into \eqref{eq:up31}, we obtain
\[
I_1 \le  \frac{20\sqrt5}{19\norm{\vz_k}}  \xkh{2(1-\tau)\cdot \norm{\vz_k-\tzl_k}+ \frac{1}{25}}
\cdot \sqrt2 \norm{\vz_k-\tzl_k}\le \frac{\sqrt2}{10} \norm{\vz_k-\tzl_k}.
\]
Here, we use the fact \eqref{eq:hypafta} that $ 0.99 \le \norm{\vz_k} \le 1.01$ and the hypothesis \eqref{eq:hypoindb}. This completes the proof.

$\hfill\square$

\subsection{Proof of Lemma \ref{le:zkl02}}
\hfill\\

For the term $I_2$, we first introduce the notation
\begin{equation} \label{eq:atilde}
\tA(\vz)=A(\vz)  \left [ \begin{array}{l} U(\vz) \vspace{0.4em} \\ \overline{ U(\vz) } \end{array}\right] \in \R^{m\times (2n-1)}
\end{equation}
for any $\vz$. From the hypothesis \eqref{eq:hypaftb}, one has $\max_{1\le j\le m} ~\abs{\va_j^* \vz_k} \lesssim \sqrt{\log m} \norm{\vz_k}$. Therefore, Lemma \ref{le:Hzlowup} implies that $\tA(\vz_k)^*\tA(\vz_k)$ is injective, and so  $ \tA(\vz_k)^{\dag}=\xkh{\tA(\vz_k)^*\tA(\vz_k)}^{-1} \tA(\vz_k)^*$.
Similarly, using the fact \eqref{eq:hypaftd}, we have $ \tA(\tzl_k)^{\dag}=\xkh{\tA(\tzl_k)^*\tA(\tzl_k)}^{-1} \tA(\tzl_k)^*$. With the above relation, one sees
\begin{eqnarray*}
I_2 & = & \norm{ \left [ \begin{array}{l} U(\vz_k) \vspace{0.4em} \\ \overline{ U(\vz_k) } \end{array}\right]  \tA(\vz_k)^{\dag} F(\tzl_k) - \left [ \begin{array}{l} U(\tzl_k) \vspace{0.4em} \\ \overline{ U(\tzl_k) } \end{array}\right]  \tA(\tzl_k)^{\dag} F(\tzl_k)    } \\
&\le & \underbrace{\norms{\left [ \begin{array}{l} U(\vz_k) \vspace{0.4em} \\ \overline{ U(\vz_k) } \end{array}\right] - \left [ \begin{array}{l} U(\tzl_k) \vspace{0.4em} \\ \overline{ U(\tzl_k) } \end{array}\right] }_{2,\Re} \norm{\tA(\vz_k)^{\dag} } \norm{F(\tzl_k) }}_{:=\beta_1} \\
&& + \norms{  \left [ \begin{array}{l} U(\tzl_k) \vspace{0.4em} \\ \overline{ U(\tzl_k) } \end{array}\right]}_{2,\Re} \underbrace{\norm{\tA(\vz_k)^{\dag} F(\tzl_k) - \tA(\tzl_k)^{\dag} F(\tzl_k) }}_{:=\beta_2}
\end{eqnarray*}

1.  For $\beta_1$, applying Lemma \ref{le:subper} with the fact \eqref{eq:UzUlQ},  we have
\begin{equation} \label{eq:bata11}
\norms{\left [ \begin{array}{l} U(\vz_k) \vspace{0.4em} \\ \overline{ U(\vz_k) } \end{array}\right] - \left [ \begin{array}{l} U(\tzl_k) \vspace{0.4em} \\ \overline{ U(\tzl_k) } \end{array}\right] }_{2,\Re} \le \frac{ \sqrt2\norm{\vz-\tzl_k} }{\sqrt{\norm{\vz_k}\norm{\tzl_k}} }.
\end{equation}
For the term $\norm{\tA(\vz_k)^{\dag} }$,  by the definition of $\tA$ as in \eqref{eq:atilde}, it then follows from  Lemma \ref{le:Hzlowup} that
\begin{equation}  \label{eq:bata12}
\norm{\tA(\vz_k)^{\dag} } =\norm{\xkh{\tA(\vz_k)^*\tA(\vz_k)}^{-1} \tA(\vz_k)^*} \le \norm{\xkh{\tA(\vz_k)^*\tA(\vz_k)}^{-1}} \cdot \sqrt2 \norm{A(\vz_k)} \le  \frac{10\sqrt{10}} {19\norm{\vz_k}}.
\end{equation}
For the term $\norm{F(\tzl_k)}$, observe that $F(\vxs e^{i\phi(\vz_{k})})=0$. Therefore,
\begin{equation} \label{eq:Fzklt}
\norm{F(\tzl_k)} =\norm{F(\tzl_k)-F(\vxs e^{i\phi(\vz_{k})})}= \norm{\int_0^1 A(\vz_\tau)   \left [ \begin{array}{l} \tzl_k- \vxs e^{i\phi(\vz_{k})} \vspace{0.4em} \\ \overline{\tzl_k- \vxs e^{i\phi(\vz_{k})}} \end{array}\right] ~d\tau},
\end{equation}
where $\vz_\tau= \vxs e^{i\phi(\vz_{k})}+\tau(\tzl_k- \vxs e^{i\phi(\vz_{k})})$. Here, the last identity is due to the fundamental theorem of calculus.   Controlling the last term in \eqref{eq:Fzklt} requires the following two consequences
\begin{equation} \label{eq:zkajtau}
\norm{\tzl_k- \vxs e^{i\phi(\vz_{k})}} \le 0.11 \qquad \mbox{and} \qquad \max_{1\le j \le m} ~\abs{\va_j^* \vz_\tau} \lesssim  \sqrt{\log m}  \norm{\vz_\tau}.
\end{equation}
To see the left statement in \eqref{eq:zkajtau}, one has
\begin{equation} \label{eq:zkldix}
\norm{\tzl_k- \vxs e^{i\phi(\vz_{k})}}  \le  \norm{\vz_k-\tzl_k} +\norm{\vz_k- \vxs e^{i\phi(\vz_{k})}} \le C_2 \sqrt{\frac{\log m} m} + \delta \le 0.11,
\end{equation}
where the second inequality follows from \eqref{eq:hypoinda} and \eqref{eq:hypoindb}. Moreover, for the right statement in \eqref{eq:zkajtau}, one can see 
\begin{eqnarray*}
\max_{1\le j \le m} ~\abs{\va_j^* \vz_\tau} &= & \tau \max_{1\le j \le m} ~ \abs{\va_j^* \tzl_k} +(1-\tau) \max_{1\le j \le m} ~ \abs{\va_j^* \vxs e^{i\phi(\vz_{k})}} \\
&  \lesssim & \tau \sqrt{\log m} \norm{\tzl_k} + (1-\tau) \sqrt{\log m} \\
&\lesssim &  \sqrt{\log m}  \norm{\vz_\tau},
\end{eqnarray*}
where the first inequality comes from \eqref{eq:hypaftd} and \eqref{eq:inajxs},  and  the last inequality arises from 
\begin{equation} \label{eq:zkldix22}
0.89 \le  \norm{\vxs}-\norm{\tzl_k- \vxs e^{i\phi(\vz_{k})}}  \le  \norm{\vz_\tau}  \le  \norm{\vxs}+ \norm{\tzl_k- \vxs e^{i\phi(\vz_{k})}} \le 1.11.
\end{equation}
Here, we use the bounds \eqref{eq:zkldix} in the above inequality. Armed with these bounds \eqref{eq:zkajtau} and \eqref{eq:zkldix22}, we can readily apply Lemma \ref{le:Hzlowup} to obtain 
\[
\norm{A(\vz_\tau)} \le \sqrt 5 \norm{\vz_\tau}.
\]
Putting it into \eqref{eq:Fzklt}, we obtain
\begin{eqnarray}
\norm{F(\tzl_k)}  & \le &  \sqrt{10} \int_0^1 \norm{\vz_\tau}~d\tau \norm{\tzl_k- \vxs e^{i\phi(\vz_{k})} } \notag\\
&\le & \sqrt{10} \cdot 1.11 \cdot  \xkh{C_2 \sqrt{\frac{\log m} m} + \delta}  \notag\\
&\le & O\xkh{\sqrt{\frac{\log m} n}} + 3.6 \delta,  \label{eq:bata13}
\end{eqnarray}
where the second inequality arises from \eqref{eq:zkldix} and \eqref{eq:zkldix22}.

Collecting the previous three bounds \eqref{eq:bata11},  \eqref{eq:bata12}, and \eqref{eq:bata13}, one can reach
\[
\beta_1\le \frac{ 20 \sqrt{5} \norm{\vz-\tzl_k} }{ 19\norm{\vz_k}\sqrt{\norm{\vz_k}\norm{\tzl_k}} } \xkh{O\xkh{\sqrt{\frac{\log m} n}} + 3.6 \delta}  \le \xkh{O\xkh{\sqrt{\frac{\log m} n} }+ 10\delta} \norm{\vz-\tzl_k}.
\]
Here,  the last inequality comes from the facts \eqref{eq:hypafta} and \eqref{eq:hypaftc}.

2. For the term $\beta_2$,  it follows from Lemma \ref{le:Hzlowup} together with the fact \eqref{eq:hypaftd} that  the nullspace $N(\tA(\tzl_k))=\bm{0}$. Therefore, by the decomposition theorem for pseudo-inverse (Lemma \ref{le:psudecom}), one has
\begin{eqnarray*}
\beta_2&\le & \norm{\tA(\tzl_k)^{\dag}\xkh{\tA(\tzl_k)-\tA(\vz_k)}\tA(\vz_k)^{\dag} F(\tzl_k) } \\
&&+ \norm{\xkh{(\tA(\tzl_k))^* \tA(\tzl_k)}^\dag  \xkh{\tA(\tzl_k)-\tA(\vz_k)}^* P_{N(\tA^*(\vz_k))} F(\tzl_k) } \\
&\le &\norm{\tA(\tzl_k)^{\dag}}  \underbrace{ \norm{\xkh{\tA(\tzl_k)-\tA(\vz_k)}\tA(\vz_k)^{\dag} F(\tzl_k)  }}_{:=\beta_{21}} \\
&&+\norm{\tA(\tzl_k)^\dag}^2  \underbrace{  \norm{\xkh{\tA(\tzl_k)-\tA(\vz_k)}^*\tA(\vz_k)\tA(\vz_k)^\dag F(\tzl_k)}}_{:=\beta_{22}} \\
&& +\norm{\tA(\tzl_k)^\dag}^2  \underbrace{ \norm{\xkh{\tA(\tzl_k)-\tA(\vz_k)}^*F(\tzl_k)}}_{:=\beta_{23}} 
\end{eqnarray*}
where the last inequality comes from the identity that $P_{N(\tA^*(\vz_k))}=I-\tA(\vz_k)\tA(\vz_k)^\dag $. As the same argument to \eqref{eq:bata12}, we have
\begin{eqnarray}  \label{eq:bata2inv}
\norm{\tA(\tzl_k)^{\dag} } & = & \norm{\xkh{\tA(\tzl_k)^*\tA(\tzl_k)}^{-1} \tA(\tzl_k)^*} \\
&\le&  \norm{\xkh{\tA(\tzl_k)^*\tA(\tzl_k)}^{-1}} \cdot \sqrt 2 \norm{A(\tzl_k)}  \notag\\
&\le&   \frac{10\sqrt {10}} {19 \norm{\tzl_k}}. \notag
\end{eqnarray}

2.1.  For the term $\beta_{21}$, let
\[
\vw:=\tA(\vz_k)^{\dag} F(\tzl_k).
\]
A direct consequence of  \eqref{eq:bata12} and \eqref{eq:bata13} is that 
\begin{equation} \label{eq:normvw}
\norm{\vw} \le \frac{10\sqrt {10}} {19 \norm{\tzl_k}} \xkh{O\xkh{\sqrt{\frac{\log m} n}}+ 3.6 \delta} \le O\xkh{\sqrt{\frac{\log m} n} } +6.1 \delta.
\end{equation}
Here, the last inequality comes from the fact \eqref{eq:hypafta}.
Recall that 
\begin{equation*} 
\tA(\vz)=A(\vz)  \left [ \begin{array}{l} U(\vz) \vspace{0.4em} \\ \overline{ U(\vz) } \end{array}\right] \in \R^{m\times (2n-1)}
\end{equation*}
Therefore,
\begin{eqnarray*}
\beta_{21} &= & \norm{\xkh{A(\tzl_k)\left [ \begin{array}{l} U(\tzl_k) \vspace{0.4em} \\ \overline{ U(\tzl_k) } \end{array}\right] - A(\vz_k) \left [ \begin{array}{l} U(\vz_k) \vspace{0.4em} \\ \overline{ U(\vz_k) } \end{array}\right] }\vw } \\
&\le &  \underbrace{ \norm{A(\tzl_k) }\norms{ \left [ \begin{array}{l} U(\tzl_k) \vspace{0.4em} \\ \overline{ U(\tzl_k) } \end{array}\right] -  \left [ \begin{array}{l} U(\vz_k) \vspace{0.4em} \\ \overline{ U(\vz_k) } \end{array}\right] }_{2,\Re} \norm{\vw}}_{:=r_1} \\
&&+ \underbrace{\norm{\xkh{A(\tzl_k)-A(\vz_k)} \left [ \begin{array}{l} U(\vz_k) \vspace{0.4em} \\ \overline{ U(\vz_k) } \end{array}\right] \vw }}_{:=r_2}.
\end{eqnarray*}
The term $r_1$ is relatively simple to control. Using Lemma \ref{le:Hzlowup} and Lemma \ref{le:subper}, together with \eqref{eq:normvw}, one has
\begin{eqnarray*}
r_1 & \le &  \sqrt 5 \norm{\tzl_k} \cdot  \frac{\sqrt 2\norm{\vz-\tzl_k} }{\sqrt{\norm{\vz_k}\norm{\tzl_k}} } \cdot \xkh{O\xkh{\sqrt{\frac{\log m} n} }+ 6.1 \delta} \\
& \le &  \xkh{O\xkh{\sqrt{\frac{\log m} n}} + 23\delta} \norm{\vz-\tzl_k}.
\end{eqnarray*}
Here,  the last inequality comes from the facts \eqref{eq:hypafta} and \eqref{eq:hypaftc}, and $C_5>0$ is a universal constant.

Moving on to the term $r_2$, observe that 
\begin{eqnarray} \label{eq:vv}
\vv:=U(\vz_k)  \vw & = & U(\vz_k)   \tA(\vz_k)^{\dag} F(\vz_k) +U(\vz_k)  \tA(\vz_k)^{\dag} (F(\tzl_k) -F(\vz_k)) \notag \\
&=& \vz_k -\vz_{k+1} +U(\vz_k)  \tA(\vz_k)^{\dag} (F(\tzl_k) -F(\vz_k)),
\end{eqnarray}
where we use the Gauss-Newton update rule $\vz_{k+1}=\vz_k- U(\vz_k)   \tA(\vz_k)^{\dag} F(\vz_k) $ in the second equation. We next show that with probability exceeding $1-O(m^{-10})-O(me^{-1.5n})$, it holds
\begin{equation}  \label{eq:ajv0}
\max_{1\le j \le m} ~\abs{\va_j^* \vv}  \lesssim   \lambda+\sqrt{\log m},
\end{equation}
where  $\lambda$ is defined in \eqref{eq:lambda}.   To see this, one has
\begin{equation} \label{eq:ajv}
\max_{1\le j \le m} ~\abs{\va_j^* \vv} \le  \max_{1\le j \le m} ~ \abs{\va_j^* \vz_k} +  \max_{1\le j \le m} ~ \abs{\va_j^* \vz_{k+1}} + \max_{1\le j \le m} ~ \norm{\va_j} \cdot  \norm{  \tA(\vz_k)^{\dag} } \norm{F(\tzl_k) -F(\vz_k)}.
\end{equation}
In view of \eqref{eq:hypafta} and  \eqref{eq:hypaftb}, we have
\begin{equation}  \label{eq:ajv1}
 \max_{1\le j \le m} ~ \abs{\va_j^* \vz_k} \lesssim \sqrt{\log m}.
\end{equation}
Furthermore, due to the independence between $\va_l$ and $\vzl_{k+1}$,  one can apply standard Gaussian concentration inequalities to show that with probability at least $1-O(m^{-10})$ 
\begin{eqnarray*}
\max_{1\le l \le m} ~ \abs{\va_l^* \tzl_{k+1}} = \max_{1\le l \le m} ~ \abs{\va_l^* \vzl_{k+1}} &  \le &  5 \sqrt{\log m} \norm{\vzl_{k+1}} \\
&  \overset{\text{(i)}}{\le} &  5 \sqrt{\log m} \xkh{ \norm{\vz_{k+1}}+\dist(\vz_{k+1},\tzl_{k+1}) } \\ 
&\overset{\text{(ii)}}{\le}  & 5 \sqrt{\log m} \xkh{1.1+\frac{\lambda}{\sqrt m} } \\
& \lesssim&  \sqrt{\log m} +\lambda,
\end{eqnarray*} 
where (i) arises from the fact $\dist(\vz_{k+1}, \vxs) \le \dist(\vz_{k}, \vxs)  \le \delta \le 0.1$ by Lemma \ref{le:conun1} and \eqref{eq:hypoinda} and \eqref{eq:hypoindc}, and (ii) comes from the definition of $\lambda$ given in \eqref{eq:lambda}. Therefore, 
\begin{eqnarray}  
 \max_{1\le l \le m} ~ \abs{\va_l^* \vz_{k+1}} &  \le  &  \max_{1\le l \le m} ~ \abs{\va_l^* \tzl_{k+1}}  +  \max_{1\le l \le m} ~ \Abs{\va_l^* \xkh{ \vz_{k+1} -\tzl_{k+1} }}  \notag \\
 &\lesssim  &  \sqrt{\log m} +\lambda +   \max_{1\le l \le m} ~ \norm{\va_l}  \max_{1\le l \le m} ~\norm{\vz_{k+1} -\tzl_{k+1} } \notag\\
 &\lesssim& \sqrt{\log m} +\lambda  +\sqrt{6n}  \cdot \frac{\lambda}{\sqrt m}\notag \\
  &\lesssim& \sqrt{\log m} +\lambda.  \label{eq:ajv2}
\end{eqnarray}
For the last term of \eqref{eq:ajv}, one has
\[
 \norm{F(\tzl_k) -F(\vz_k)} =\norm{\int_0^1 A(\vz_k+\tau(\tzl_k-\vz_k))~d\tau \cdot (\vz_k-\tzl_k) } \le C_6 \norm{\vz_k-\tzl_k}.
\]
Here, the inequality follows from Lemma \ref{le:Hzlowup} and the facts \eqref{eq:hypaft}, and $C_6>0$ is a universal constant.  It immediately gives 
\begin{eqnarray}
 \max_{1\le j \le m} ~ \norm{\va_j} \cdot  \norm{  \tA(\vz_k)^{\dag} } \norm{F(\tzl_k) -F(\vz_k)}  & \le  & \sqrt{6n} \cdot \frac{\sqrt 5}{\norm{\vz_k}} \cdot  C_6\norm{\vz_k-\tzl_k} \notag\\
 &\le  &C_6  \frac{\sqrt{30 n}}{\norm{\vz_k}} \cdot C_2 \sqrt{\frac{\log m}{m}} \notag \\
 &\lesssim & \sqrt{\log m}.  \label{eq:ajv3}
\end{eqnarray}
Here,  the first inequality comes from \eqref{eq:maallaj} and \eqref{eq:bata11},  the second inequality  follows from the hypothesis \eqref{eq:hypoindb}, and the last inequality arises from the fact \eqref{eq:hypafta}. Putting \eqref{eq:ajv1}, \eqref{eq:ajv2} and \eqref{eq:ajv3} into \eqref{eq:ajv}, we completes the proof of \eqref{eq:ajv0}.

Armed with the bound \eqref{eq:ajv0}, we can readily apply Lemma \ref{le:contra2} to obtain that with probability at least $1-O(m^{-10})$, it holds
\begin{eqnarray*}
r_2 &\le & 2 \sqrt{ \frac{1}{m} \sum_{j=1}^m \abs{\va_j^* \vv}^2 \abs{\va_j^*(\vz-\tzl_k)}^2 } \\
 &\le & 2\norm{\vz_k-\tzl_k} \cdot \sqrt{2\norm{\vv}^2+O\xkh{\sqrt{\frac{n\log m}{m} }} \xkh{\lambda+\sqrt{\log m}}^2} \\
 &\le &   \norm{\vz_k-\tzl_k} \cdot \xkh{ 13 \delta+ O\xkh{\sqrt{\frac{\log m} n} } + O\xkh{\sqrt[4 ]{\frac{n\log m}{m} }} \xkh{\lambda+\sqrt{\log m}}},
\end{eqnarray*}
provided $m\ge C_0\xkh{\lambda^2+\log^2m}n\log m$. Here, the last inequality comes from \eqref{eq:normvw} and the fact $\sqrt2\norm{\vv} =\norm{\vw}$ by \eqref{eq:vv}. Combining the estimators for $r_1$ and $r_2$, one has
\[
\beta_{21} \le   \norm{\vz_k-\tzl_k} \cdot \xkh{ 36 \delta+ O\xkh{\sqrt{\frac{\log m} n} } + O\xkh{\sqrt[4 ]{\frac{n\log m}{m} }} \xkh{\lambda+\sqrt{\log m}}}.
\]

2.2. For the term $\beta_{22}$, using the same notations as when estimating $\beta_{21}$, we have
\begin{eqnarray*}
\beta_{22} &=&   \norm{ \xkh{ \left [ \begin{array}{l} U(\tzl_k) \vspace{0.4em} \\ \overline{ U(\tzl_k) } \end{array}\right]^*  A^*(\tzl_k)- \left [ \begin{array}{l} U(\vz_k) \vspace{0.4em} \\ \overline{ U(\vz_k) } \end{array}\right]^* A^*(\vz_k) } A(\vz_k) \left [ \begin{array}{l} \vv \vspace{0.4em} \\ \overline{ \vv } \end{array}\right]   }  \\
&\le & \underbrace{\sqrt2 \norms{ \left [ \begin{array}{l} U(\tzl_k) \vspace{0.4em} \\ \overline{ U(\tzl_k) } \end{array}\right] -  \left [ \begin{array}{l} U(\vz_k) \vspace{0.4em} \\ \overline{ U(\vz_k) } \end{array}\right] }_{2,\Re} \norm{A^*(\tzl_k) } \norm{A(\vz_k)} \norm{\vv}}_{:=\omega_1}  \\
&& + \sqrt2 \underbrace{\norm{ \xkh{A(\tzl_k)-A(\vz_k)}^* A(\vz_k) \left [ \begin{array}{l} \vv \vspace{0.4em} \\ \overline{ \vv } \end{array}\right]  } }_{:=\omega_2}.
\end{eqnarray*}
Here, $\vv$ is given in \eqref{eq:vv}.  Using the same argument as when estimating $r_1$ above,  one has
\begin{eqnarray*}
\omega_1 & \le & \sqrt 2 \cdot  \frac{ \sqrt2\norm{\vz-\tzl_k} }{\sqrt{\norm{\vz_k}\norm{\tzl_k}} } \cdot \sqrt 5 \norm{\tzl_k} \cdot \sqrt 5 \norm{\vz_k}  \cdot \frac{1} {\sqrt2} \xkh{O\xkh{\sqrt{\frac{\log m} n} }+ 6.1\delta} \\
& \le &  \xkh{O\xkh{\sqrt{\frac{\log m} n}} + 52\delta} \norm{\vz-\tzl_k}.
\end{eqnarray*}
Here, we use the facts \eqref{eq:hypaft} in the last inequality.  For the term $\omega_2$, by the definition of $A(\vz)$ given in \eqref{eq:Azk},  one has 
\begin{eqnarray*}
\omega_2 & \le &  2 \xkh{\norm{ \frac1m \sum_{j=1}^m  \vz_k^* \va_j \va_j^* \vv \va_j\va_j^* (\vz-\tzl_k)} +\norm{ \frac1m \sum_{j=1}^m  \vv^* \va_j \va_j^* \vz_k   \va_j\va_j^* (\vz-\tzl_k)} }  \\
&\le & 4   \norm{ \frac1m \sum_{j=1}^m  \vz_k^* \va_j \va_j^* \vv \va_j\va_j^* }  \norm{\vz-\tzl_k}.
\end{eqnarray*}
Note that
\begin{eqnarray*}
 \norm{ \frac1m \sum_{j=1}^m  \vz_k^* \va_j \va_j^* \vv \va_j\va_j^* } & \overset{\text{(i)}}{\le} &  \sqrt{  \norm{  \frac1m \sum_{j=1}^m \abs{\va_j^* \vz_k}^2 \va_j\va_j^*}}  \sqrt{  \norm{  \frac1m \sum_{j=1}^m \abs{\va_j^* \vv}^2 \va_j\va_j^*}} \\
 &\overset{\text{(ii)}}{\le} & \sqrt{2\norm{\vz_k}^2 + O\xkh{ \sqrt{\frac{n\log^3m}{m}}} } \sqrt{2\norm{\vv}^2+O\xkh{ \sqrt{\frac{n\log m}{m}}}\xkh{ \lambda+\sqrt{\log m} }^2  } \\
 &\overset{\text{(iii)}}{\le} & 9\delta+ O\xkh{ \sqrt[4]{\frac{n\log m}{m}}}\xkh{ \lambda+\sqrt{\log m} } + O\xkh{\sqrt{\frac{\log m}{n}}},
\end{eqnarray*} 
provided $m\ge C_0 n\log ^3 m$ for some universal constant $C_0>0$.
Here, (i) comes from Cauchy-Schwarz inequality, (ii) arises from Lemma \ref{le:Hzlowup} and the facts \eqref{eq:hypaftb}, \eqref{eq:ajv0}, and (iii) comes from \eqref{eq:hypafta}, \eqref{eq:normvw} and \eqref{eq:vv}. The previous three bounds taken collectively yield 
\[
\beta_{22} \le \xkh{88\delta+ O\xkh{ \sqrt[4]{\frac{n\log m}{m}}}\xkh{ \lambda+\sqrt{\log m} } + O\xkh{\sqrt{\frac{\log m}{n}}}}\cdot  \norm{\vz_k-\tzl_k}.
\]

2.3.  For the term $\beta_{23}$, one can apply the triangle inequality to get
\begin{eqnarray*}
\beta_{23} &=& \norm{ \xkh{ \left [ \begin{array}{l} U(\tzl_k) \vspace{0.4em} \\ \overline{ U(\tzl_k) } \end{array}\right]^*  A^*(\tzl_k)- \left [ \begin{array}{l} U(\vz_k) \vspace{0.4em} \\ \overline{ U(\vz_k) } \end{array}\right]^* A^*(\vz_k) } F(\tzl_k)   } \\
&\le & \underbrace{ \norm{A(\tzl_k) }\norms{ \left [ \begin{array}{l} U(\tzl_k) \vspace{0.4em} \\ \overline{ U(\tzl_k) } \end{array}\right] -  \left [ \begin{array}{l} U(\vz_k) \vspace{0.4em} \\ \overline{ U(\vz_k) } \end{array}\right] }_{2,\Re} \norm{F(\tzl_k)  }}_{:=\theta_1} \\
&& + \underbrace{\sqrt2 \norm{ \xkh{A(\tzl_k)-A(\vz_k)}^* F(\tzl_k)  } }_{:=\theta_2}. \\
\end{eqnarray*}
By utilizing Lemma \ref{le:Hzlowup} and Lemma \ref{le:subper}, along with the facts \eqref{eq:hypaft} and \eqref{eq:bata13}, we obtain
\begin{eqnarray} \label{eq:theta1}
\theta_1 &\le&  \sqrt5 \norm{\tzl_k} \cdot \frac{ \sqrt2\norm{\vz-\tzl_k} }{\sqrt{\norm{\vz_k}\norm{\tzl_k}} } \cdot \xkh{O\xkh{\sqrt{\frac{\log m} n}}+3.6\delta}  \notag \\
& \le &   \xkh{O\xkh{\sqrt{\frac{\log m} n}} + 13\delta} \norm{\vz-\tzl_k}.
\end{eqnarray}
Recall the definitions of $A(\vz)$ and $F(\vz)$ given in \eqref{eq:Azk} and \eqref{eq:Fzk}, respectively. One has
\begin{eqnarray} \label{eq:theta2}
\theta_2 &=& 2 \cdot  \norm{ \frac1m \sum_{j=1}^m \xkh{\abs{\va_j^* \tzl_k}^2- \abs{\va_j^* \vxs}^2 } \va_j\va_j^* (\vz-\tzl_k)} \notag \\
&\le &  2  \norm{\vz-\tzl_k} \cdot  \norm{ \frac1m \sum_{j=1}^m \xkh{\abs{\va_j^* \tzl_k}^2- \abs{\va_j^* \vxs}^2 } \va_j\va_j^*}. 
\end{eqnarray}
Combine Lemma \ref{le:contra} and the facts \eqref{eq:hypaft} and \eqref{eq:inajxs} to see that
\begin{eqnarray*}
&& \norm{ \frac1m \sum_{j=1}^m \xkh{\abs{\va_j^* \tzl_k}^2- \abs{\va_j^* \vxs}^2 } \va_j\va_j^* -  \xkh{  \tzl_k (\tzl_k)^* + \norm{ \tzl_k}^2 I - \vxs(\vxs)^* -\norm{\vxs}^2 I} } \\
&\lesssim & \sqrt{\frac{n\log^3m}{m}} \xkh{\norm{ \tzl_k}^2 +\norm{\vxs}^2} \lesssim   \sqrt{\frac{n\log^3m}{m}}.
\end{eqnarray*}
This further allows one to derive
\begin{eqnarray} \label{eq:theta3}
 && \norm{ \frac1m \sum_{j=1}^m \xkh{\abs{\va_j^* \tzl_k}^2- \abs{\va_j^* \vxs}^2 } \va_j\va_j^*} \\
 &\le &  \norm{ \tzl_k (\tzl_k)^* + \norm{ \tzl_k}^2 I - \vxs(\vxs)^* -\norm{\vxs}^2 I } +O\xkh{\sqrt{\frac{n\log^3m}{m}} } \notag \\
 & \le & 2 \xkh{\norm{\tzl_k}+\norm{\vxs}} \norm{\tzl_k-\vxs e^{i\phi(\vz_{k})}}+O\xkh{\sqrt{\frac{n\log^3m}{m}} } \notag \\
 & \le & 4.3\delta+ O\xkh{\sqrt{\frac{\log m}{n}}}  +O\xkh{\sqrt{\frac{n\log^3m}{m}} },
\end{eqnarray}
where the last inequality arises from the fact  \eqref{eq:zkldix} that 
\[
 \norm{\tzl_k-\vxs e^{i\phi(\vz_{k})}} \le O\xkh{\sqrt{\frac{\log m}{n}}}+\delta.
\]
Putting \eqref{eq:theta1}, \eqref{eq:theta2}, and \eqref{eq:theta3} together, one yields 
\[
\beta_{23} \le \xkh{ 22\delta+ O\xkh{\sqrt{\frac{\log m}{n}}}  +O\xkh{\sqrt{\frac{n\log^3m}{m}} }} \norm{\vz-\tzl_k}.
\]

Collecting $\beta_{21}, \beta_{22}, \beta_{23}$ together with $\beta_1$,  we have 
\[
I_2 \le  \xkh{ 536 \delta+ O\xkh{\sqrt{\frac{\log m} n} } +O\xkh{\sqrt{\frac{n\log^3m}{m}} }+ O\xkh{\sqrt[4 ]{\frac{n\log m}{m} }} \xkh{\lambda+\sqrt{\log m}}}\norm{\vz_k-\tzl_k} .
\]

$\hfill\square$

\subsection{Proof of Lemma \ref{le:zkl03}} \hfill\\

For the term $I_3$, set
\begin{equation} \label{eq:atildel}
\tAl(\tzl_k)= \Al(\vzl_k) \left [ \begin{array}{l} U(\vzl_k) \vspace{0.4em} \\ \overline{ U(\vzl_k) } \end{array}\right].
\end{equation}
For convenience, we use $\tA, \tAl $ short for $\tA(\tzl_k), \tAl(\tzl_k) $, respectively. Then the term $I_3$ can be rewritten as
\begin{eqnarray*}
I_3 &=& \norm{\left [ \begin{array}{l} U(\tzl_k) \vspace{0.4em} \\ \overline{ U(\tzl_k)} \end{array}\right] \xkh{  (\tA^* \tA)^{-1} \tA^*  F(\tzl_k) - ( (\tAl)^*  \tAl)^{-1}  (\tAl)^*  \Fl(\tzl_k)  }}
\end{eqnarray*}
From the definitions of \eqref{eq:Azkl} and \eqref{eq:Fzkl}, we know 
\[
A^*(\vzl_k)A(\vzl_k) = (\Al(\vzl_k))^*\Al(\vzl_k)+ \frac 1m
 \left [ \begin{array}{ll}  \abs{\va_l^* \tzl_k}^2 \va_l\va_l^* &    \xkh{\va_l^*\tzl_k}^2 \va_l \va_l^\T   \\   \xkh{(\tzl_k)^* \va_l}^2 \overline{\va_l} \va_l^* &  \abs{\va_l^* \tzl_k}^2 \overline{\va_l} \va_l^\T   \end{array}\right].
\]
and
\[
A^*(\vzl_k)F(\tzl_k)=\Al(\tzl_k)^*  \Fl(\tzl_k) + \frac1m \xkh{\abs{\va_l^* \tzl_k}^2- \abs{\va_l^* \vxs}^2}  \left [ \begin{array}{l} \va_l \va_l^* \tzl_k \vspace{0.5em} \\ \overline{ \va_l \va_l^* \tzl_k } \end{array}\right]
\]
It then gives
\begin{equation} \label{eq:tAA}
\tA^* \tA= (\tAl)^*  \tAl+ \vu_l \vu_l^*
\end{equation}
and 
\begin{equation} \label{eq:tAFzkl}
 \tA^*  F(\tzl_k)= (\tAl)^*  \Fl(\tzl_k) + \vw_l,
\end{equation}
where 
\[
\vu_l =  \frac1{\sqrt m} \left [ \begin{array}{l} U(\vzl_k) \vspace{0.4em} \\ \overline{ U(\vzl_k) } \end{array}\right]^* \left [ \begin{array}{l} \va_l \va_l^* \tzl_k \vspace{0.5em} \\ \overline{ \va_l \va_l^* \tzl_k } \end{array}\right] \in \R^n
\]
and 
\[
\vw_l=  \frac1m \xkh{\abs{\va_l^* \tzl_k}^2- \abs{\va_l^* \vxs}^2} \left [ \begin{array}{l} U(\vzl_k) \vspace{0.4em} \\ \overline{ U(\vzl_k) } \end{array}\right]^*  \left [ \begin{array}{l} \va_l \va_l^* \tzl_k \vspace{0.5em} \\ \overline{ \va_l \va_l^* \tzl_k } \end{array}\right]  \in \R^{2n-1}.
\]
Using Sherman-Morrison formula (Lemma \ref{le:invranone}) for $(\tA^* \tA)^{-1}$  together with \eqref{eq:tAA}, we have
\begin{equation} \label{eq:taAinv}
(\tA^* \tA)^{-1}= ( (\tAl)^*  \tAl)^{-1}-\frac{ ( (\tAl)^*  \tAl)^{-1} \vu_l \vu_l^*  ( (\tAl)^*  \tAl)^{-1}}{1+\vu_l^*  ( (\tAl)^*  \tAl)^{-1} \vu_l}.
\end{equation}
For convenience, set
\[
\vv^{(l)}:=\xkh{ (\tAl)^*  \tAl}^{-1} (\tAl)^*  \Fl(\tzl_k) \in \R^{2n-1}.
\]
It then follows from \eqref{eq:tAFzkl} and \eqref{eq:taAinv} that
\begin{eqnarray*}
I_3&= &\sqrt2 \norm{  (\tA^* \tA)^{-1} \tA^*  F(\tzl_k) - ( (\tAl)^*  \tAl)^{-1}  (\tAl)^*  \Fl(\tzl_k) } \\
&= &\sqrt2 \norm{ (\tA^* \tA)^{-1} \vw_l - \frac{ \xkh{(\tAl)^*  \tAl}^{-1} \vu_l \vu_l^* \vv^{(l)}}{1+\vu_l^*  ( (\tAl)^*  \tAl)^{-1} \vu_l} }\\
&\overset{\text{(i)}}{\le}& \sqrt2\norm{ (\tA^* \tA)^{-1}} \norm{\vw_l}+ \sqrt2\norm{ \xkh{(\tAl)^*  \tAl}^{-1} } \norm{\vu_l} \Abs{\vu_l^* \vv^{(l)}} \\
&\overset{\text{(ii)}}{\lesssim} & \frac{1}{ \norm{\tzl_k}^2} \cdot  \frac{\sqrt{n \log^3m}}{m} \norm{\tzl_k}^2+ \frac{1}{ \norm{\tzl_k}^2} \cdot \sqrt{\frac{n\log m}{m}} \norm{\tzl_k} \cdot  \frac{\log m }{\sqrt m} \\
&\lesssim& \frac{\sqrt{n \log^3m}}{m},
\end{eqnarray*}
Here, (i) arises from the Cauchy-Schwarz inequality and the fact $\sigma_{\min}\xkh{\xkh{(\tAl)^*  \tAl}^{-1} } > 0 $ by Lemma \ref{le:Hzlowup}, and (ii)
comes from the following three bounds:
\begin{eqnarray*}
\max_{1\le l\le m}~ \norm{\vu_l}  &\le  &  \max_{1\le l\le m}~ \frac2 {\sqrt{m}} \norm{\va_l \va_l^* \tzl_k} \\
&\le &\frac2 {\sqrt{m}}\cdot \max_{1\le l\le m}~  \norm{\va_l} \cdot \max_{1\le l\le m}~  \abs{\va_l^* \tzl_k} \\
&\lesssim & \sqrt{\frac{n\log m}{m}} \norm{\tzl_k}
\end{eqnarray*}
holds with probability at least $1-O(m^{-10})-O(me^{-1.5n})$ due to the independence between $\va_l$ and $\vzl_k$, and 
\begin{eqnarray*}
\norm{\vw_l} & = &  \frac1{\sqrt m} \Abs{\abs{\va_l^* \tzl_k}^2- \abs{\va_l^* \vxs}^2} \norm{\vu_l} \\
 & \lesssim &   \frac1{\sqrt m}  \cdot \log m\cdot \sqrt{\frac{n\log m}{m}} \norm{\tzl_k} \\
 &\lesssim&  \frac{\sqrt{n \log^3m}}{m} \norm{\tzl_k}^2,
\end{eqnarray*}
and
\begin{eqnarray*}
\abs{\vu_l^* \vv^{(l)}} &=  &  \frac1{\sqrt m} \Abs{\nj{ \left [ \begin{array}{l}  \va_l^* \tzl_k \vspace{0.5em} \\ \overline{ \va_l^* \tzl_k } \end{array}\right] , \left [ \begin{array}{l}  \va_l^* U(\vzl_k) \vv^{(l)}  \vspace{0.5em} \\ \overline{ \va_l^* U(\vzl_k) \vv^{(l)}} \end{array}\right]  } }\\
&\le & \frac2{\sqrt m} \Abs{\va_l^* \tzl_k} \Abs{  \va_l^* U(\vzl_k) \vv^{(l)}  } \\
& \lesssim &  \frac{\log m }{\sqrt m}\norm{\tzl_k} \norm{\vv^{(l)} } \\
&\lesssim & \frac{\log m }{\sqrt m},
\end{eqnarray*}
where the third line comes from the statistically independence between $\vv^{(l)}, \vzl_k$ and $\va_l$, and the last line comes from the fact \eqref{eq:hypaftc} and the bound
\begin{eqnarray*}
\norm{\vv^{(l)} } & = & \norm{\xkh{ (\tAl)^*  \tAl}^{-1} (\tAl)^*  \Fl(\tzl_k) } \\
& \le&  \norm{ \xkh{ (\tAl)^*  \tAl}^{-1}} \norm{(\tAl)^*  \Fl(\tzl_k)} \\
&=&\norm{ \xkh{ (\tAl)^*  \tAl}^{-1}} \cdot  \sqrt 2  \norm{ \frac1m \sum_{j\neq l} \xkh{\abs{\va_j^* \tzl_k}^2- \abs{\va_j^* \vxs}^2 } \va_j\va_j^* \tzl_k } \\
&\le & \frac{10}{19\norm{\tzl_k}^2} \cdot \sqrt 2 \xkh{ 5\delta+ O\xkh{\sqrt{\frac{\log m}{n}}}  +O\xkh{\sqrt{\frac{n\log^3m}{m}} }} \norm{\tzl_k} \\
&\le & O(1),
\end{eqnarray*}
provided $m\gtrsim n\log^3 m$. Here, we use Lemma \ref{le:Hzlowup} and the similar argument to \eqref{eq:theta3} in the second inequality.

$\hfill\square$

\begin{lemma} \label{le:subper}
For any $\vz_k, \tzl_k$, one has
\[
\min_{O \in \mathcal{O}_{2n-1}}\norms{\left [ \begin{array}{l} U(\vz_k) \vspace{0.4em} \\ \overline{ U(\vz_k) } \end{array}\right] - \left [ \begin{array}{l} U(\tzl_k) \vspace{0.4em} \\ \overline{ U(\tzl_k) } \end{array}\right] O}_{2,\Re}  \le  \frac{ \sqrt 2\norm{\vz-\tzl_k} }{\sqrt{\norm{\vz_k}\norm{\tzl_k}} }.
\]
Here, $\mathcal{O}_{2n-1}$ denotes the set of all $(2n-1)\times (2n-1)$ real orthogonal matrix.
\end{lemma}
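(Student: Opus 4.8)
The plan is to recast the statement as a bound on the angle between the two one-dimensional real lines $\spann_\R\{i\vz_k\}$ and $\spann_\R\{i\tzl_k\}$ inside $\C^n\cong\R^{2n}$, and then to produce an explicit orthogonal matrix $O$ out of the rotation that carries one line onto the other.

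First I would reduce to the ``un-doubled'' problem. For any real $\xi\in\R^{2n-1}$, since $O$ and $\xi$ are real one has $\overline{U(\vz_k)}\,\xi=\overline{U(\vz_k)\xi}$ and $\overline{U(\tzl_k)}\,O\xi=\overline{U(\tzl_k)O\xi}$, hence
\[
\norm{\left[\begin{array}{l} U(\vz_k) \\ \overline{U(\vz_k)}\end{array}\right]\xi-\left[\begin{array}{l} U(\tzl_k) \\ \overline{U(\tzl_k)}\end{array}\right]O\xi}^2=2\,\norm{U(\vz_k)\xi-U(\tzl_k)O\xi}^2 ,
\]
so that $\norms{\left[\begin{array}{l} U(\vz_k)\\ \overline{U(\vz_k)}\end{array}\right]-\left[\begin{array}{l} U(\tzl_k)\\ \overline{U(\tzl_k)}\end{array}\right]O}_{2,\Re}=\sqrt2\,\sup_{\norm{\xi}=1}\norm{U(\vz_k)\xi-U(\tzl_k)O\xi}$. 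It therefore suffices to exhibit $O\in\mathcal O_{2n-1}$ with $\sup_{\norm{\xi}=1}\norm{U(\vz_k)\xi-U(\tzl_k)O\xi}\le\norm{\vz_k-\tzl_k}/\sqrt{\norm{\vz_k}\norm{\tzl_k}}$.

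Next, regard $\C^n$ as $\R^{2n}$. By definition $S(\vz_k)=(\spann_\R\{i\vz_k\})^\perp$ and $S(\tzl_k)=(\spann_\R\{i\tzl_k\})^\perp$, two $(2n-1)$-dimensional real subspaces differing only in a one-dimensional orthogonal complement. Let $\theta\in[0,\pi/2]$ be the angle between those complements, so $\cos\theta=|\Re(\vz_k^*\tzl_k)|/(\norm{\vz_k}\norm{\tzl_k})$ (and $\Re(\vz_k^*\tzl_k)=|\vz_k^*\tzl_k|\ge 0$ in the phase-aligned case of interest). Let $R\in\mathrm{SO}(2n)$ be the rotation of the plane $\spann_\R\{i\vz_k,i\tzl_k\}$ by $\theta$ carrying $\R i\vz_k$ onto $\R i\tzl_k$ and acting as the identity on the orthogonal $(2n-2)$-plane; then $R\,S(\vz_k)=S(\tzl_k)$. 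Since $R$ is a real isometry it preserves the real Gram matrix of the columns of $U(\vz_k)$, so the columns of $R\,U(\vz_k)$ form a real-orthonormal basis of $S(\tzl_k)$, whence $R\,U(\vz_k)=U(\tzl_k)O$ for a unique orthogonal $O\in\mathcal O_{2n-1}$; take this $O$. Then $U(\vz_k)\xi-U(\tzl_k)O\xi=(I-R)\,U(\vz_k)\xi$, and since $\norm{U(\vz_k)\xi}=\norm{\xi}$ (the columns of $U(\vz_k)$ being real-orthonormal) we get $\sup_{\norm{\xi}=1}\norm{U(\vz_k)\xi-U(\tzl_k)O\xi}\le\norm{I-R}$, the real operator norm; a one-line computation using that $\tfrac12(R+R^\T)$ has spectrum $\{\cos\theta,1\}$ yields $\norm{I-R}^2=2(1-\cos\theta)$.

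It then remains to check the numerical inequality $\sqrt2\cdot\sqrt{2(1-\cos\theta)}\le\sqrt2\,\norm{\vz_k-\tzl_k}/\sqrt{\norm{\vz_k}\norm{\tzl_k}}$, i.e.\ $2(1-\cos\theta)\norm{\vz_k}\norm{\tzl_k}\le\norm{\vz_k-\tzl_k}^2$. Substituting $\cos\theta=|\Re(\vz_k^*\tzl_k)|/(\norm{\vz_k}\norm{\tzl_k})$ and $\norm{\vz_k-\tzl_k}^2=\norm{\vz_k}^2+\norm{\tzl_k}^2-2\Re(\vz_k^*\tzl_k)$, this collapses (treating $\Re(\vz_k^*\tzl_k)\ge 0$ and $<0$ separately, both trivial) to the AM--GM inequality $2\norm{\vz_k}\norm{\tzl_k}\le\norm{\vz_k}^2+\norm{\tzl_k}^2$. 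I do not expect a genuine analytic obstacle; the only points demanding care are the $\sqrt2$ bookkeeping coming from the doubled blocks $\left[\begin{array}{l} U\\ \overline{U}\end{array}\right]$, and the verification that the rotation $R$ transports one orthonormal basis of $S(\vz_k)$ to a basis of $S(\tzl_k)$ through a genuinely orthogonal change of basis $O$.
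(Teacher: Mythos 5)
Your argument is correct and arrives at the same bound; it differs from the paper's proof in how the Procrustes quantity is tied to the angle between the normal lines. Both proofs begin identically by pulling out the $\sqrt2$ factor to reduce to $\sqrt2\,\min_{O}\norms{U(\vz_k)-U(\tzl_k)O}_{2,\Re}$. The paper then passes to the stacked real frames $V(\vz)=[U_{\Re}(\vz);U_{\Im}(\vz)]\in\R^{2n\times(2n-1)}$ and invokes Lemma~\ref{le:pututwosp} (quoted from \cite{turstregion}) twice: once for the bound $\min_O\|V(\vz_k)-V(\tzl_k)O\|\le\sqrt{2-2\cos\theta_1}$, and once for the identity $\theta_1(\mathcal U,\mathcal V)=\theta_1(\mathcal U^\perp,\mathcal V^\perp)$, which collapses $\theta_1$ to the angle between the real lines $\R\,i\vz_k$ and $\R\,i\tzl_k$. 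You instead build an explicit rotation $R\in\mathrm{SO}(2n)$ in the plane of the two normals carrying $S(\vz_k)$ onto $S(\tzl_k)$, take $O$ to be the induced orthogonal change of frame, and compute $\|I-R\|=\sqrt{2(1-\cos\theta)}$ directly; this bypasses the cited principal-angle lemma, is self-contained, and makes the witnessing $O$ explicit, at the modest cost of the bookkeeping you flag (that $R$ acts only $\R$-linearly on $\C^n\cong\R^{2n}$, and that $R\,U(\vz_k)$ differs from $U(\tzl_k)$ by a genuine element of $\mathcal O_{2n-1}$). The closing step is the same in both arguments: the law of cosines applied to $\va=[-\Im(\vz_k);\Re(\vz_k)]$ and $\vb=[-\Im(\tzl_k);\Re(\tzl_k)]$, combined with AM--GM, yields $1-\cos\theta\le\norm{\vz_k-\tzl_k}^2/(2\norm{\vz_k}\norm{\tzl_k})$, which is exactly what is needed.
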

\begin{proof}
Write the matrix $U(\vz_k) \in \C^{n\times (2n-1)}$ into the form $U(\vz_k)=U_{\Re}(\vz_k)+i U_{\Im}(\vz_k)$, where $U_{\Re}(\vz_k)$ and $U_{\Im}(\vz_k)$ collect entrywise real and imaginary parts of $U(\vz_k)$, respectively. Define
\[
V(\vz_k):= \left [ \begin{array}{l} U_{\Re}(\vz_k) \vspace{0.4em} \\ U_{\Im}(\vz_k) \end{array}\right] \in \R^{2n\times (2n-1)}.
\]
It is easy to verify $V(\vz_k)$ is an orthonormal matrix. We also define $V(\tzl_k)$ accordingly.
Note that
\begin{eqnarray*}
\min_{O \in \mathcal{O}_{2n-1}}\norms{\left [ \begin{array}{l} U(\vz_k) \vspace{0.4em} \\ \overline{ U(\vz_k) } \end{array}\right] - \left [ \begin{array}{l} U(\tzl_k) \vspace{0.4em} \\ \overline{ U(\tzl_k) } \end{array}\right] O}_{2,\Re}& = & \sqrt 2 \min_{O \in \mathcal{O}_{2n-1}} \norms{U(\vz_k)- U(\tzl_k)O}_{2,\Re} \\
&= & \sqrt 2 \min_{O \in \mathcal{O}_{2n-1}}\norm{V(\vz_k)- V(\tzl_k)O}\\
&=& 2 \sqrt{1-\cos\theta_1},
\end{eqnarray*}
where the last equality comes from Lemma \ref{le:pututwosp}.
Since $i\vz_k$ is the normal vector of the space generated by $U(\vz_k)$, it means that the normal vector of $V(\vz_k)$ is $\va:=[-\Im(\vz_k); \Re(\vz_k)]$. Similarly, the normal vector of $V(\tzl_k)$ is $\vb:=[-\Im(\tzl_k); \Re(\tzl_k)]$. By the law of cosines and using Lemma \ref{le:pututwosp} once again, we have
\[
\cos\theta_1=\frac{\norm{\va}^2+\norm{\vb}^2- \norm{\va-\vb}^2}{2\norm{\va}\norm{\vb} } \ge 1- \frac{ \norm{\va-\vb}^2}{2\norm{\va}\norm{\vb}}=1- \frac{ \norm{\vz_k-\tzl_k}^2}{2\norm{\vz_k}\norm{\tzl_k}}.
\]
This completes the proof.

\end{proof}

\subsection{Proof of Lemma \ref{le:z0x}} \label{sec:z0x}  \hfill\\

Without loss of generality we assume $\norm{\vxs}=1$.
Recall that $\vz_0=\sqrt{\lambda_1(Y)/2}\cdot \tz_0$, where $\lambda_1(Y)$ and $\tz_0 \in \C^n$ are the leading eigenvalue and eigenvector of 
\[
Y=\frac1m \sum_{j=1}^m y_j\va_j\va_j^*.
\]
For any $\delta>0$, it follows from Lemma \ref{le:contra} that, with probability exceeding $1-O(m^{-10})$, it holds $\norm{Y-\E Y} \le \delta/2 $.
Note that $\E Y =I_n+ \vxs(\vxs)^*$.  Applying a variant of Wedin’s sin$\Theta$ theorem \cite[Theorem 2.1]{Dopico}, we obtain
\[
\min_{\phi \in \R}\norm{\tz_0 -\vxs e^{i \phi}} \le \frac{ \sqrt 2 \norm{Y-\E Y} }{\lambda_1(\E Y)-\lambda_2(\E Y)} \le \frac1{\sqrt 2} \delta.
\]
Therefore, using the triangle inequality, we have
\begin{eqnarray*}
\min_{\phi \in \R} \norm{\vz_0 -\vxs  e^{i \phi}} &= & \min_{\phi \in \R} \norm{\sqrt{\lambda_1(Y)/2}\cdot \tz_0-\vxs  e^{i \phi}} \\
&\le & \abs{\sqrt{\lambda_1(Y)/2}-1} \cdot \norm{\tz_0}+ \min_{\phi \in \R} \norm{\tz_0 -\vxs e^{i \phi}}  \\
&\le &  \frac14 \delta+ \frac1{\sqrt 2} \delta \le \delta,
\end{eqnarray*}
where the second inequality comes from
\[
 \Abs{\sqrt{\frac{\lambda_1(Y)}2}-1}  \le  \Abs{\frac{\lambda_1(Y)}2-1}  \le \frac{\norm{Y-\E Y}} 2  \le  \frac14 \delta.
\]
Here, we use the Weyl's inequality in the last inequality.

$\hfill\square$

\section{Appendix B: Auxiliary Lemmas}


\begin{lemma}[Sherman-Morrison formula]  \cite{golub}
If $A$ is a nonsingular $n\times n$ matrix and $\vv$ is a vector, then
\[
(A+\vv\vv^*)^{-1} =A^{-1} -\frac{A^{-1} \vv \vv^* A^{-1}}{1+\vv^* A^{-1} \vv}.
\]
\end{lemma}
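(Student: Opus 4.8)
The plan is to verify the stated identity by direct multiplication, treating the right-hand side as a candidate for the inverse. Throughout, write $\gamma := \vv^* A^{-1}\vv \in \C$, which is a scalar; the formula is only meaningful when $1 + \gamma \neq 0$, so I take this as a standing assumption, and under it I will show that $A + \vv\vv^*$ is invertible with the asserted inverse. Set $B := A^{-1} - (1+\gamma)^{-1} A^{-1}\vv\vv^*A^{-1}$.

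First I would expand $(A + \vv\vv^*)B$. Since $(A + \vv\vv^*)A^{-1} = I + \vv\vv^*A^{-1}$ and
\[
(A + \vv\vv^*)\,A^{-1}\vv\vv^*A^{-1} = \vv\vv^*A^{-1} + \vv\,(\vv^*A^{-1}\vv)\,\vv^*A^{-1} = (1+\gamma)\,\vv\vv^*A^{-1},
\]
dividing the second display by $1+\gamma$ and subtracting from the first gives $(A+\vv\vv^*)B = I + \vv\vv^*A^{-1} - \vv\vv^*A^{-1} = I$. The only algebraic point is that $\vv^*A^{-1}\vv$ is a scalar and can be pulled out of the rank-one term.

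A symmetric computation yields $B(A + \vv\vv^*) = I$; alternatively, applying the already-proved right-inverse identity with $A$ replaced by $A^*$ and then taking conjugate transposes produces exactly $B(A+\vv\vv^*) = I$. Since $A + \vv\vv^*$ is square, either one-sided identity already forces $B = (A+\vv\vv^*)^{-1}$, which is the claimed formula. The argument is entirely routine; the only subtlety worth flagging is the hypothesis $1 + \vv^*A^{-1}\vv \neq 0$, without which the right-hand side is undefined (and indeed $A + \vv\vv^*$ may genuinely be singular). I foresee no real obstacle.
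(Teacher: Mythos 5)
Your verification is correct: expanding $(A+\vv\vv^*)B$ and using that $\vv^*A^{-1}\vv$ is a scalar gives the identity immediately, and the one-sided check suffices for square matrices. The paper does not prove this lemma at all — it simply cites it to Golub and Van Loan — so there is no in-paper argument to compare against; your direct multiplication is the standard proof of Sherman–Morrison. You are also right to flag the implicit hypothesis $1+\vv^*A^{-1}\vv\neq 0$, which the paper's statement omits (though in the paper's application, Lemma~\ref{le:Hzlowup} guarantees $\tA^*\tA$ is positive definite, so $\vv^*A^{-1}\vv\ge 0$ and the denominator is automatically nonzero there).
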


\begin{lemma}\label{le:invranone} \cite[Theorem 2.1]{wedin}  \label{le:psudecom}
For any matrix $A,B \in \C^{m\times n}$, one has the decomposition 
\[
B^\dag -A^\dag = -B^\dag (B-A) A^\dag+ (B^*B)^\dag (B-A)^* P_{N(A^*)}+ P_{N(B)} (B-A)^* (AA^*)^\dag ,
\]
where $P_{X}$ is the orthogonal projection onto the subspace $X$, and $N(A^*)$ and $N(B)$ are the nullspaces of $A^*$ and $B$, respectively.
\end{lemma}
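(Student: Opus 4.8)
The plan is to prove the identity by direct algebraic manipulation, using only the four Moore--Penrose equations together with the standard factorizations $B^{\dag}=(B^{*}B)^{\dag}B^{*}$ and $A^{\dag}=A^{*}(AA^{*})^{\dag}$, and the projector facts $AA^{\dag}=I-P_{N(A^{*})}$ and $B^{\dag}B=I-P_{N(B)}$ (recall $R(A)^{\perp}=N(A^{*})$ and $R(B^{*})^{\perp}=N(B)$). First I would expand the ``naive'' first term,
\[
-B^{\dag}(B-A)A^{\dag}=B^{\dag}AA^{\dag}-B^{\dag}BA^{\dag},
\]
move it to the left-hand side, and regroup:
\[
B^{\dag}-A^{\dag}+B^{\dag}(B-A)A^{\dag}=B^{\dag}\bigl(I-AA^{\dag}\bigr)+\bigl(B^{\dag}B-I\bigr)A^{\dag}=B^{\dag}P_{N(A^{*})}-P_{N(B)}A^{\dag}.
\]
It then remains only to identify these two residual terms with the last two summands in the claimed decomposition.

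For the first residual term I would substitute $B^{\dag}=(B^{*}B)^{\dag}B^{*}$, so that $B^{\dag}P_{N(A^{*})}=(B^{*}B)^{\dag}B^{*}P_{N(A^{*})}$, and then show $B^{*}P_{N(A^{*})}=(B-A)^{*}P_{N(A^{*})}$; this reduces to the identity $A^{*}P_{N(A^{*})}=A^{*}(I-AA^{\dag})=0$, which holds because $A^{*}AA^{\dag}=A^{*}(AA^{\dag})^{*}=(AA^{\dag}A)^{*}=A^{*}$. Symmetrically, for the second residual term I would substitute $A^{\dag}=A^{*}(AA^{*})^{\dag}$, so that $-P_{N(B)}A^{\dag}=-P_{N(B)}A^{*}(AA^{*})^{\dag}$, and then show $-P_{N(B)}A^{*}=P_{N(B)}(B-A)^{*}$, i.e.\ $P_{N(B)}B^{*}=(I-B^{\dag}B)B^{*}=0$, which holds because $B^{\dag}BB^{*}=(B^{\dag}B)^{*}B^{*}=(BB^{\dag}B)^{*}=B^{*}$. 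Reassembling the three pieces gives exactly the stated formula.

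The argument is entirely routine, so the only real ``obstacle'' is bookkeeping: tracking which of the four defining equations is invoked at each step, and being careful with conjugate transposes — in particular remembering that $P_{N(A^{*})}=I-AA^{\dag}$ and $P_{N(B)}=I-B^{\dag}B$ (not the reversed products). Since the statement is classical, one could alternatively just cite \cite[Theorem 2.1]{wedin}; but the self-contained derivation above is short enough to include, and it makes transparent why the extra projector terms $(B^{*}B)^{\dag}(B-A)^{*}P_{N(A^{*})}$ and $P_{N(B)}(B-A)^{*}(AA^{*})^{\dag}$ appear precisely when $A$ and $B$ fail to have the same rank.
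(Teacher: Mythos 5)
Your derivation is correct: the regrouping $B^{\dag}-A^{\dag}+B^{\dag}(B-A)A^{\dag}=B^{\dag}(I-AA^{\dag})+(B^{\dag}B-I)A^{\dag}$ checks out, the projector identifications $I-AA^{\dag}=P_{N(A^{*})}$ and $I-B^{\dag}B=P_{N(B)}$ are right, and the two absorption steps $A^{*}P_{N(A^{*})}=0$ and $P_{N(B)}B^{*}=0$ (via the Hermitian-plus-defining MP equations) are exactly what lets you replace $B^{*}$ by $(B-A)^{*}$ in front of $P_{N(A^{*})}$ and $-A^{*}$ by $(B-A)^{*}$ after $P_{N(B)}$. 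The paper itself gives no proof of this lemma — it simply cites Wedin's Theorem 2.1 — so there is nothing to diverge from; your self-contained argument is the standard one underlying Wedin's decomposition, and it is sound.
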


\begin{lemma}\cite[Lemma 33]{turstregion} \label{le:pututwosp}
Consider two linear subspace $\mathcal{U}, \mathcal{V}$ of dimension $k$ in $\R^n$ spanned by orthonormal bases $U$ and $V$, respectively. Suppose $0\le \theta_k \le \theta_{k-1}\le   \cdots \le \theta_1\le \pi/2$ are the principal angles between $\mathcal{U}$ and $ \mathcal{V}$. Then it holds
\begin{itemize}
\item[(i)] $\min_{O\in \mathcal{O}_k} \norm{U-VO} \le \sqrt{2-2\cos\theta_1}$;
\item[(ii)] $\theta_1(\mathcal{U}, \mathcal{V})=\theta_1(\mathcal{U}^\perp, \mathcal{V}^\perp)$. Here, $\mathcal{U}^\perp$ and $\mathcal{V}^\perp$ are the orthogonal complement of $\mathcal{U}$ and $\mathcal{V}$, respectively.
\end{itemize}

\end{lemma}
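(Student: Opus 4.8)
The plan is to reduce both parts to the singular value decomposition of the $k\times k$ matrix $M:=U^\T V$. By the standard definition of the principal angles $\theta_1\ge\theta_2\ge\cdots\ge\theta_k\ge 0$ between $\mathcal U$ and $\mathcal V$, the singular values of $M$ are exactly $\cos\theta_k\ge\cdots\ge\cos\theta_1$, so in particular $\sigma_{\min}(M)=\cos\theta_1$. Write $M=P\Sigma Q^\T$ with $P,Q\in\mathcal O_k$ and $\Sigma=\diag(\cos\theta_k,\ldots,\cos\theta_1)$.

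For part (i), I would compute directly. For any $O\in\mathcal O_k$, using $U^\T U=V^\T V=I_k$,
\[
\norm{U-VO}^2=\lambda_{\max}\bigl((U-VO)^\T(U-VO)\bigr)=\lambda_{\max}\bigl(2I_k-MO-O^\T M^\T\bigr)=2-\lambda_{\min}\bigl(MO+(MO)^\T\bigr).
\]
Choosing the specific orthogonal matrix $O=QP^\T$ makes $MO=P\Sigma P^\T$ symmetric positive semidefinite, so $MO+(MO)^\T=2P\Sigma P^\T$ and $\lambda_{\min}(MO+(MO)^\T)=2\sigma_{\min}(M)=2\cos\theta_1$. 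Hence $\min_{O\in\mathcal O_k}\norm{U-VO}^2\le 2-2\cos\theta_1$, which is exactly (i). (Equality in fact holds, but only the upper bound is needed.)

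For part (ii), I would pass to the orthogonal projections $P_{\mathcal U}=UU^\T$ and $P_{\mathcal V}=VV^\T$ and use the classical identity that, \emph{for equidimensional subspaces}, $\norm{P_{\mathcal U}-P_{\mathcal V}}=\sin\theta_1(\mathcal U,\mathcal V)$. I would justify this by first substituting $U\mapsto UP$, $V\mapsto VQ$ to reduce to the case $U^\T V=\Sigma$ diagonal, and then noting that $\mathcal U+\mathcal V$ splits into an orthogonal direct sum of one- and two-dimensional subspaces, each invariant under both $P_{\mathcal U}$ and $P_{\mathcal V}$, on the $i$-th of which $P_{\mathcal U}-P_{\mathcal V}$ has operator norm $\sin\theta_i$; taking the maximum over $i$ yields $\sin\theta_1$. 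Since $P_{\mathcal U^\perp}-P_{\mathcal V^\perp}=(I-P_{\mathcal U})-(I-P_{\mathcal V})=-(P_{\mathcal U}-P_{\mathcal V})$ and $\dim\mathcal U^\perp=\dim\mathcal V^\perp=n-k$, applying the same identity to the complements gives $\sin\theta_1(\mathcal U^\perp,\mathcal V^\perp)=\norm{P_{\mathcal U^\perp}-P_{\mathcal V^\perp}}=\norm{P_{\mathcal U}-P_{\mathcal V}}=\sin\theta_1(\mathcal U,\mathcal V)$. As both largest principal angles lie in $[0,\pi/2]$, on which $\sin$ is injective, we conclude $\theta_1(\mathcal U^\perp,\mathcal V^\perp)=\theta_1(\mathcal U,\mathcal V)$.

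The main obstacle is the block-decomposition step inside part (ii): one has to verify carefully that after reducing to $U^\T V=\Sigma$, the normalized residual vectors spanning $\mathcal V$ relative to $\mathcal U$ together with the columns of $U$ really do assemble into mutually orthogonal $2$-planes, that $P_{\mathcal U}-P_{\mathcal V}$ preserves each plane, and that its restriction has norm $\sin\theta_i$ — equivalently, one may simply invoke the CS decomposition as a black box and read off the sines. Part (i), by contrast, is a one-line matrix computation once the SVD of $U^\T V$ is in hand.
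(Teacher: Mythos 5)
The paper does not prove this lemma; it is invoked directly as \cite[Lemma~33]{turstregion}, so there is no in-paper proof to compare against. Your proposal supplies a correct, self-contained argument, and both halves are sound.

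For part (i), the computation
\[
\norm{U-VO}^2=\lambda_{\max}\bigl(2I_k - MO - (MO)^\T\bigr)=2-\lambda_{\min}\bigl(MO+(MO)^\T\bigr),
\]
followed by the choice $O=QP^\T$ that symmetrizes $MO=P\Sigma P^\T$, is exactly the standard Procrustes argument, and correctly identifies $\sigma_{\min}(U^\T V)=\cos\theta_1$ under the paper's convention $\theta_k\le\cdots\le\theta_1$. This is the cleanest route and in fact proves equality, though only the upper bound is claimed.

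For part (ii), reducing to the projection-gap identity $\norm{P_{\mathcal U}-P_{\mathcal V}}=\sin\theta_1$ for equidimensional subspaces, together with $P_{\mathcal U^\perp}-P_{\mathcal V^\perp}=-(P_{\mathcal U}-P_{\mathcal V})$ and the injectivity of $\sin$ on $[0,\pi/2]$, is the essentially canonical argument. You correctly flag that the only real work is the identity itself, and invoking the CS decomposition (or the explicit $2\times 2$ block reduction you sketch) is a legitimate way to discharge it; the one-dimensional summands where $\theta_i=0$ or where a column of $V$ already lies in $\mathcal U$ fold in harmlessly since the restricted operator is zero there. One small hygiene point: the projection-gap identity in this clean form requires $\dim\mathcal U=\dim\mathcal V$, which you do note, and which also guarantees $\mathcal U^\perp$ and $\mathcal V^\perp$ are equidimensional so the identity applies again on the complement side. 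The argument is complete.
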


\begin{lemma} \label{le:8.4}
Let $\vxs \in \C^n$ with $\norm{\vxs}=1$. For any vector $\vz_1, \vz_2 \in \C^n$ satisfy
\[
\max\dkh{ \norm{\vz_1-\vxs},  \norm{\vz_2-\vxs}} \le \gamma  \le 1/4.
\]
Denote 
\[
\alpha_1={\rm argmin}_{ |\alpha|=1} ~ \norm{\vz_1-\alpha \vxs}\quad \mbox{and} \quad \alpha_2={\rm argmin}_{ |\alpha|=1} ~ \norm{\vz_2-\alpha \vxs}.
\]
Then we have
\[
\norm{ \overline{\alpha}_1 \vz_1 -\overline{\alpha}_2 \vz_2} \le 6\norm{\vz_1-\vz_2}.
\]
\end{lemma}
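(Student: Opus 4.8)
The plan is to first derive a closed-form for the optimal phases, then reduce the whole estimate to a bound on the difference of two normalized complex numbers. Set $c_i:=\vxs^*\vz_i$ for $i=1,2$. Expanding $\norm{\vz_i-\alpha\vxs}^2=\norm{\vz_i}^2+1-2\Re(\alpha\,\overline{c_i})$ over $|\alpha|=1$, the minimizer maximizes $\Re(\alpha\,\overline{c_i})$, so $\alpha_i=c_i/|c_i|$ and hence $\overline{\alpha}_i=\overline{c_i}/|c_i|=\vz_i^*\vxs/\lvert\vz_i^*\vxs\rvert$. This is well-defined and unique because $|c_i|=|\vxs^*\vz_i|\ge\norm{\vxs}^2-|\vxs^*(\vz_i-\vxs)|\ge 1-\gamma\ge 3/4>0$, using $\norm{\vxs}=1$ and $\norm{\vz_i-\vxs}\le\gamma\le 1/4$.

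Next I would split the target quantity as
\[
\overline{\alpha}_1\vz_1-\overline{\alpha}_2\vz_2=\overline{\alpha}_1(\vz_1-\vz_2)+(\overline{\alpha}_1-\overline{\alpha}_2)\vz_2,
\]
so that, since $|\overline{\alpha}_1|=1$ and $\norm{\vz_2}\le\norm{\vxs}+\gamma=1+\gamma$, the triangle inequality gives
\[
\norm{\overline{\alpha}_1\vz_1-\overline{\alpha}_2\vz_2}\le\norm{\vz_1-\vz_2}+(1+\gamma)\,\lvert\overline{\alpha}_1-\overline{\alpha}_2\rvert .
\]
The key step is then to control $\lvert\overline{\alpha}_1-\overline{\alpha}_2\rvert=\lvert\alpha_1-\alpha_2\rvert=\bigl\lvert \tfrac{c_1}{|c_1|}-\tfrac{c_2}{|c_2|}\bigr\rvert$. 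Writing
\[
\frac{c_1}{|c_1|}-\frac{c_2}{|c_2|}=\frac{c_1-c_2}{|c_1|}+c_2\cdot\frac{|c_2|-|c_1|}{|c_1|\,|c_2|},
\]
and using $\bigl\lvert |c_2|-|c_1|\bigr\rvert\le|c_1-c_2|$ together with $|c_1-c_2|=|\vxs^*(\vz_1-\vz_2)|\le\norm{\vz_1-\vz_2}$ and $|c_1|\ge 1-\gamma$, I get $\lvert\alpha_1-\alpha_2\rvert\le \dfrac{2\norm{\vz_1-\vz_2}}{1-\gamma}$.

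Combining the two displays yields
\[
\norm{\overline{\alpha}_1\vz_1-\overline{\alpha}_2\vz_2}\le\Bigl(1+\frac{2(1+\gamma)}{1-\gamma}\Bigr)\norm{\vz_1-\vz_2},
\]
and since $\gamma\le 1/4$ forces $\frac{1+\gamma}{1-\gamma}\le 5/3$, the constant is at most $1+10/3=13/3<6$, which is the claim. I do not expect a genuine obstacle here: the only point requiring care is that the argmin defining $\alpha_i$ is attained and unique, which is exactly the estimate $|c_i|\ge 3/4>0$; the gap between the constant $13/3$ actually obtained and the stated $6$ is harmless slack.
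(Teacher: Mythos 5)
Your proof is correct and follows essentially the same route as the paper: both derive the closed form $\alpha_i=\vxs^*\vz_i/|\vxs^*\vz_i|$, split $\overline{\alpha}_1\vz_1-\overline{\alpha}_2\vz_2$ by triangle inequality into a $\|\vz_1-\vz_2\|$ term plus a $|\alpha_1-\alpha_2|$ term, and bound the phase difference by $2\|\vz_1-\vz_2\|/|\vxs^*\vz_i|$. The only difference is bookkeeping: you use the sharper lower bound $|\vxs^*\vz_i|\ge 1-\gamma\ge 3/4$, whereas the paper only uses $\ge 1/2$, so your constant comes out as $13/3$ rather than exactly $6$ — harmless slack, as you note.
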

\begin{proof}
Applying the triangle inequality, one has
\begin{eqnarray*}
\norm{\overline{\alpha}_1 \vz_1 -\overline{\alpha}_2 \vz_2} & \le & \norm{\overline{\alpha}_1 \vz_1 -\overline{\alpha}_2 \vz_1} +\norm{\overline{\alpha}_2 \vz_1- \overline{\alpha}_2 \vz_2 } \\
&= & \norm{\vz_1} \abs{\alpha_1-\alpha_2}+ \norm{\vz_1-\vz_2}.
\end{eqnarray*}
Recall that $\norm{\vz_1-\vxs} \le \gamma$ and $\norm{\vxs}=1$. It gives
\[
\frac 34\le 1- \gamma  \le \norm{\vz_1} \le  1+ \gamma \le \frac 54.
\]
Therefore, to prove the result, we only need to show $\abs{\alpha_1-\alpha_2} \le 4 \norm{\vz_1-\vz_2}$. To this end, observe that 
\begin{eqnarray*}
\alpha_1 & = & {\rm argmin}_{ |\alpha|=1} ~ \norm{\vz_1-\alpha \vxs}^2 \\
& =& {\rm argmin}_{ |\alpha|=1} ~ \norm{\vz_1} +\| \vxs\|_2-2 \Re(\alpha \vz_1^* \vxs) \\
&=& {\rm argmax}_{ |\alpha|=1} ~ \Re(\alpha \vz_1^* \vxs) \\
&=& {\rm Phase}((\vxs)^* \vz_1).
\end{eqnarray*}
Similarly, we have $\alpha_2={\rm Phase}((\vxs)^* \vz_2)$. It then gives
\begin{eqnarray*}
\alpha_1-\alpha_2 &=&  {\rm Phase}((\vxs)^* \vz_1)- {\rm Phase}((\vxs)^* \vz_2) \\
&\le & \frac{2\xkh{(\vxs)^* \vz_1-(\vxs)^* \vz_2}}{|(\vxs)^* \vz_1|} \\
&\le & \frac{2\| \vxs\|_2 \norm{\vz_1-\vz_2}}{|(\vxs)^* \vz_1|} \\
&\le & 4\norm{\vz_1-\vz_2},
\end{eqnarray*}
where the first inequality comes from the fact that for any $a,b \in \C$, it holds
\[
{\rm Phase}(a)-{\rm Phase}(b)= \Abs{\frac a{|a|}- \frac b{|b|}} \le \Abs{\frac{a-b}{\abs{a}}} + \abs{b} \Abs{\frac 1{\abs{a}}-\frac1{\abs{b}}} \le \frac{2\abs{a-b}}{\abs{a}},
\]
 the second inequality Cauchy-Schwarz inequality, and the last inequality follows from the fact that $\abs{\vz_1^* \vxs} \ge 1/2$. Indeed, since $\norm{\vz_1-\vxs} \le \gamma$, then
 \[
 \norm{\vz_1}^2 +\| \vxs\|_2^2-2 \Re(\alpha \vz_1^* \vxs)  \le \gamma^2.
 \]
It implies that 
\[
\abs{\vz_1^* \vxs} \ge \frac 12 \cdot \xkh{  \norm{\vz_1}^2 +\| \vxs\|_2^2-\gamma^2 } \ge 1- \gamma \ge  \frac12.
\]
This completes the proof.

\end{proof}

The following lemma is the complex version of Lemma 14 in \cite{ChenFan}.
\begin{lemma} \label{le:contra} \cite{ChenFan}.
Fix $\vx^{\sharp} \in\C^n$.  Suppose that $\va_j \sim 1/\sqrt2\cdot \N(0,I_n)+i /\sqrt2\cdot \N(0,I_n), 1\le j\le m$. It holds with probability at least $1-O(m^{-10})$ that 
\[
\norm{ \frac1m \sum_{j=1}^m \abs{\va_j^* \vxs}^2 \va_j\va_j^* - \vxs (\vxs)^*- \norm{\vxs}^2 I_n } \le c_0\sqrt{\frac{n\log^3m}{m}} \norm{\vxs}^2
\]
provided $m\ge C n\log^3 m$ for some sufficiently large constant $C>0$. Furthermore, for any $c_1>1$, it holds with probability at least $1-O(m^{-10})$, 
\[
\norm{ \frac1m \sum_{j=1}^m \abs{\va_j^* \vz}^2 \va_j\va_j^* - \vz\vz^*- \norm{\vz}^2 I_n } \le c_0\sqrt{\frac{n\log^3m}{m}}  \norm{\vz}^2
\]
and 
\[
\norm{ \frac1m \sum_{j=1}^m \xkh{\va_j^* \vz}^2 \va_j\va_j^\T - 2\vz\vz^\T } \le c_0\sqrt{\frac{n\log^3m}{m}}  \norm{\vz}^2
\]
holds simultaneously for all $\vz \in \C^n$ obeying $\max_{1\le j\le m} ~\abs{\va_j^* \vz} \le c_1  \sqrt{\log m}  \norm{\vz}$. Here,  $c_0>0$ is a universal constant.
\end{lemma}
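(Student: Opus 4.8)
The statement is the complex analogue of \cite[Lemma~14]{ChenFan}, so the plan is to mimic that proof while tracking the two changes caused by working over $\C$. By homogeneity of degree $2$ in $\vz$ (and in $\vxs$) I may assume $\norm{\vz}=\norm{\vxs}=1$. The relevant fourth-moment identities for a unit vector $\vu$ and $\va_j\sim\tfrac1{\sqrt2}\N(0,I_n)+\tfrac{i}{\sqrt2}\N(0,I_n)$ are $\E[\abs{\va_j^*\vu}^2\va_j\va_j^*]=\vu\vu^*+\norm{\vu}^2 I_n$ and $\E[(\va_j^*\vu)^2\va_j\va_j^\T]=2\vu\vu^\T$; these produce, respectively, the extra $\norm{\vz}^2 I_n$ and the factor $2$ in the statement. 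The second of the two matrices is not Hermitian, so throughout I would control it through its Hermitian dilation and otherwise argue exactly as for the first one.

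\emph{Step 1 (fixed vector, truncation plus matrix Bernstein).} Fix a unit vector $\vu$ and set $B_j:=\abs{\va_j^*\vu}^2\,\1_{\{\abs{\va_j^*\vu}\le c_1\sqrt{\log m}\}}\,\1_{\{\norm{\va_j}\le\sqrt{6n}\}}\,\va_j\va_j^*$. On the events of \eqref{eq:inajxs} and \eqref{eq:maallaj}, all the indicators equal $1$ when $\vu=\vxs$, so it suffices to control the $B_j$. Here $\norm{B_j}\le 6c_1^2\,n\log m$; the truncation bias $\norm{\E B_j-(\vu\vu^*+\norm{\vu}^2 I_n)}$ is super-polynomially small in $m$, since the truncation events have probability $m^{-\Omega(c_1^2)}$ while $\abs{\va_j^*\vu}^2\va_j\va_j^*$ has all moments finite; and a Gaussian moment computation gives the variance proxy $\norm{\sum_j\E B_j^2}\lesssim mn$ (the matrix $\E[\abs{\va_j^*\vu}^4\norm{\va_j}^2\va_j\va_j^*]$ has the form $\alpha I_n+\beta\vu\vu^*$ with trace $O(n^2)$ and $\vu$-marginal $O(n)$, forcing $\alpha,\beta=O(n)$). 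Matrix Bernstein then yields $\norm{\tfrac1m\sum_j B_j-(\vu\vu^*+\norm{\vu}^2 I_n)}\lesssim\sqrt{n\log m/m}+n\log^2 m/m\le c_0\sqrt{n\log^3 m/m}$ with probability $1-O(m^{-10})$, where the last step uses $m\ge Cn\log^3 m$.

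\emph{Step 2 (uniformity over near-independent $\vz$).} On $\{\norm{\vz}=1,\ \max_j\abs{\va_j^*\vz}\le c_1\sqrt{\log m}\}$ the truncation $\1_{\{\abs{\va_j^*\vz}\le c_1\sqrt{\log m}\}}$ is vacuous, so it is enough to bound $\sup_{\vz}\norm{\tfrac1m\sum_j\abs{\va_j^*\vz}^2\1_{\{\abs{\va_j^*\vz}\le c_1\sqrt{\log m}\}}\va_j\va_j^*-(\vz\vz^*+\norm{\vz}^2 I_n)}$ over the unit sphere. A crude $\varepsilon$-net over $\mathbb{S}_{\C}^{n-1}$ combined with the Lipschitz bound of the map in $\vz$ does not close, because the per-term operator norm $O(n\log m)$ in the Bernstein estimate of Step~1 is too large against an $e^{O(n)}$ union bound when $m$ is only $\gtrsim n\log^3 m$; instead I would run the empirical-process argument of \cite[Lemma~14]{ChenFan}: symmetrize, use Talagrand's contraction together with the uniform bound $\abs{\va_j^*\vz}^2\1_{\{\abs{\va_j^*\vz}\le c_1\sqrt{\log m}\}}\le c_1^2\log m$ to peel off the now-bounded weights, bound the Rademacher complexity of $\{\va\mapsto\abs{\va^*\vh}^2\}$ over the sphere by $\sqrt{n/m}$ up to logarithmic factors, and truncate the remaining sub-exponential tail, which accounts for the $\log^3 m$. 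The non-Hermitian term $\tfrac1m\sum_j(\va_j^*\vz)^2\va_j\va_j^\T$ is treated in the same way via its dilation.

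The main obstacle is Step~2: the passage from a fixed vector to a bound that is uniform over all near-independent $\vz$ cannot be done by a naive net, and requires the finer chaining/empirical-process argument, carefully exploiting that after truncation at level $c_1\sqrt{\log m}$ the weights are bounded by $O(\log m)$. Everything else — the complex fourth-moment identities, the super-polynomial smallness of the truncation bias, and the elementary inequality $n\log^2 m/m\lesssim\sqrt{n\log^3 m/m}$ valid under $m\gtrsim n\log^3 m$ — is routine.
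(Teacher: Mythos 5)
The paper does not prove Lemma~\ref{le:contra}; it is taken by citation from \cite{ChenFan}. What the paper \emph{does} prove, in full, is the nearly identical Lemma~\ref{le:contra2}, and that proof reveals the intended mechanism. Comparing against it: your Step~1 (matrix Bernstein at a fixed vector, with truncation on $\abs{\va_j^*\vu}$ and on $\norm{\va_j}$, variance proxy $O(mn)$, and the observation that $n\log^2 m/m\lesssim\sqrt{n\log^3 m/m}$ under $m\gtrsim n\log^3m$) is fine and suffices for the first display concerning the fixed vector $\vxs$.

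Your Step~2, however, rests on a misdiagnosis. You argue that an $\varepsilon$-net over $\vz$ cannot close because the per-term operator-norm bound $O(n\log m)$ from Step~1 is too weak against an $e^{O(n)}$ union bound. That is true \emph{if} one insists on running matrix Bernstein separately at every net point, but it is not how the paper argues. In the proof of Lemma~\ref{le:contra2} the operator norm is written as $\sup_{\vw}\abs{\vw^*M(\vz)\vw}$ (bilinear form $\vu^*M(\vz)\vv$ for the non-Hermitian matrix), and one covers \emph{both} $\vz$ and the test vector $\vw$ by nets. At a fixed pair $(\vz,\vw)$ the summand is a \emph{scalar} $\abs{\va_j^*\vz}^2\abs{\va_j^*\vw}^2\1_{\{\abs{\va_j^*\vz}\le c_1\sqrt{\log m}\}}$ whose sub-exponential norm is $O(\log m)$, not $O(n\log m)$; the dimension~$n$ never enters the uniform bound. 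Scalar Bernstein then gives a tail $\exp(-c\epsilon^2 m)$ at deviation level $\epsilon\log m$, and with $\epsilon=C_1\sqrt{n\log m/m}$ this is $\exp(-cC_1^2 n\log m)$, which crushes the net cardinality $\exp(O(n\log m))$ for $C_1$ large. The discretization errors from the two nets (and the super-polynomially small truncation bias you mention) are handled exactly as in Lemma~\ref{le:contra2}'s proof by taking mesh sizes $\epsilon_1,\epsilon_2\sim m^{-2}$. So the covering argument does close, and closes elementarily.

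Your proposed replacement — symmetrization, Talagrand contraction, Rademacher complexity — is both heavier than necessary and, as written, does not quite apply: the weight $\abs{\va_j^*\vz}^2\1(\cdot)$ depends on $\vz$, which sits inside the supremum, and it multiplies a $\vw$-dependent quantity rather than being composed with a Lipschitz map of a linear functional; the contraction inequality does not peel it off in the way the sketch suggests, and additional work (decoupling, or a chaining argument over the joint $(\vz,\vw)$ process) would be needed to make this rigorous. In short: replace Step~2 with the double-net-plus-scalar-Bernstein argument of Lemma~\ref{le:contra2}, which the author demonstrably uses for the companion lemma and which avoids the obstruction you correctly identified in the naive one-net matrix-Bernstein variant.
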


When utilizing Lemma \ref{le:contra}, it requires the condition that the vector  $\vz \in \C^n$ satisfies $\max_{1\le j\le m} ~\abs{\va_j^* \vz} \lesssim  \sqrt{\log m}  \norm{\vz}$. However, in certain cases, we can only demonstrate that the vectors of interest satisfy $\max_{1\le j\le m} ~\abs{\va_j^* \vz} \lesssim  \sqrt{\log m} $ and $\norm{\vz} \le \delta$ for a constant $\delta > 0$. Hence, a slightly modified version of Lemma \ref{le:contra} is required, as presented below.

\begin{lemma} \label{le:contra2}
Suppose that $\va_j \sim 1/\sqrt2\cdot \N(0,I_n)+i /\sqrt2\cdot \N(0,I_n), 1\le j\le m$.  For any fixed $\beta>1$ and $\delta>0$, assume that $m\ge C\max\xkh{\delta, \beta^4 n\log m}$ for some universal constant $C>0$. Then with probability at least $1-O(m^{-10})$, 
\begin{equation} \label{eq:le551}
\norm{ \frac1m \sum_{j=1}^m \abs{\va_j^* \vz}^2 \va_j\va_j^* - \vz\vz^*- \norm{\vz}^2 I_n } \le c_0 \beta^2 \sqrt{\frac{n\log m}{m}}
\end{equation}
and 
\begin{equation} \label{eq:le552}
\norm{ \frac1m \sum_{j=1}^m \xkh{\va_j^* \vz}^2 \va_j\va_j^\T - 2\vz\vz^\T } \le c_0\beta^2 \sqrt{\frac{n\log m}{m}}
\end{equation}
holds simultaneously for all $\vz \in \C^n$ obeying 
\begin{subequations} \label{eq:nearind000}
\begin{gather} 
\norm{\vz} \le \delta, \label{eq:nearind01}\\
\max_{1\le j\le m} ~\abs{\va_j^* \vz} \le \beta \label{eq:nearind1}.
\end{gather}
\end{subequations}
Here, $c_0>0$ is a universal constant.
\end{lemma}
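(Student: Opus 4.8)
The plan is to establish \eqref{eq:le551} and \eqref{eq:le552} as a uniform matrix deviation bound in the spirit of Lemma \ref{le:contra}, the only difference being that the truncation is carried out at the absolute level $\beta$ rather than at the $\vz$-dependent level $c_1\sqrt{\log m}\norm{\vz}$; indeed, when $\beta\asymp\sqrt{\log m}$ the statement essentially reduces to Lemma \ref{le:contra}, and the point of the present formulation is precisely to replace the hypothesis $\max_j\abs{\va_j^*\vz}\le c_1\sqrt{\log m}\norm{\vz}$ by the weaker $\max_j\abs{\va_j^*\vz}\le\beta$, at the cost of turning $\log^3 m$ into $\beta^4\log m$. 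The starting observation is that on the event \eqref{eq:nearind000} the hard truncation $\1_{\abs{\va_j^*\vz}\le\beta}$ is invisible, so it can be inserted for free, after which the summands become genuinely bounded rather than heavy-tailed. Concretely, fix a Lipschitz cutoff $\chi:\R_{\ge 0}\to[0,1]$ with $\chi\equiv 1$ on $[0,\beta]$, $\chi\equiv 0$ on $[2\beta,\infty)$, and Lipschitz constant at most $1/\beta$, set $\phi(t):=\abs{t}^2\chi(\abs t)$ and $\psi(t):=t^2\chi(\abs t)$ for $t\in\C$, and define
\[
G_1(\vz):=\frac1m\sum_{j=1}^m\phi(\va_j^*\vz)\,\va_j\va_j^* ,\qquad G_2(\vz):=\frac1m\sum_{j=1}^m\psi(\va_j^*\vz)\,\va_j\va_j^\T .
\]
On the event \eqref{eq:nearind000} one has $G_1(\vz)=\frac1m\sum_j\abs{\va_j^*\vz}^2\va_j\va_j^*$ and $G_2(\vz)=\frac1m\sum_j(\va_j^*\vz)^2\va_j\va_j^\T$, so it suffices to control $\norm{G_1(\vz)-\vz\vz^*-\norm{\vz}^2 I_n}$ and $\norm{G_2(\vz)-2\vz\vz^\T}$ uniformly over $\dkh{\vz:\norm{\vz}\le\delta}$, the gain being that $G_1,G_2$ are now defined for every $\vz$ and, on the event $\max_j\norm{\va_j}\le\sqrt{6n}$ of \eqref{eq:maallaj}, are Lipschitz in $\vz$ with Lipschitz constant polynomial in $n$ and $m$.

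Next, for a fixed $\vz$ with $\norm{\vz}\le\delta$ I would split $\norm{G_1(\vz)-\vz\vz^*-\norm{\vz}^2 I_n}$ into a deterministic bias term $\norm{\E G_1(\vz)-\vz\vz^*-\norm{\vz}^2 I_n}=\norm{\E[\abs{\va_j^*\vz}^2(1-\chi(\abs{\va_j^*\vz}))\va_j\va_j^*]}$ and a stochastic term $\norm{G_1(\vz)-\E G_1(\vz)}$. Writing $\va_j^*\vz=\norm{\vz}\,g$ for a standard complex Gaussian scalar $g$ and decomposing $\va_j$ along and orthogonal to $\vz$, a direct tail computation bounds the bias by $O\!\bigl(\norm{\vz}^2 e^{-\beta^2/\norm{\vz}^2}(\beta/\norm{\vz})^4\bigr)$, which is $\ll\beta^2\sqrt{n\log m/m}$ once $\norm{\vz}\lesssim\beta/\sqrt{\log m}$; on the event \eqref{eq:nearind000} the constraint $\max_j\abs{\va_j^*\vz}\le\beta$ combined with $\norm{\vz}\le\delta$ forces (up to constants) exactly this, which I would verify by an auxiliary \emph{every direction is well illuminated} estimate, $\min_{\norm{\vu}=1}\max_{1\le j\le m}\abs{\va_j^*\vu}\gtrsim\sqrt{\log m}$ with the required probability, proved by an $\epsilon$-net union bound using $\PP(\abs{\va_j^*\vu}\le t)=1-e^{-t^2}$. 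For the stochastic term the truncation makes the summands have operator norm $O(\beta^2 n/m)$ and matrix variance $O(\beta^2\norm{\vz}^2 n/m)$, which is the trade-off that lets the exponential concentration go through at sample size $m\gtrsim\beta^4 n\log m$; the same computation handles $G_2$. Finally I would pass to an $\eta$-net $\mathcal Q$ of $\dkh{\vz:\norm{\vz}\le\delta}$ with $\eta$ inverse-polynomial in $m$, so that $\abs{\mathcal Q}\le e^{Cn\log m}$ and the Lipschitz bound gives $\sup_{\norm{\vz}\le\delta}\norm{G_i(\vz)-\E G_i(\vz)}\le\sup_{\vz\in\mathcal Q}\norm{G_i(\vz)-\E G_i(\vz)}+O(\eta\cdot\mathrm{poly}(n,m))$; a union bound of the pointwise estimate over $\mathcal Q$, combined with the bias bound and \eqref{eq:maallaj}, then yields \eqref{eq:le551}–\eqref{eq:le552} with probability $1-O(m^{-10})$.

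The main obstacle is the sharpness required in the stochastic step: the crude estimate "summand $O(\beta^2 n/m)$, variance $O(\beta^2\norm{\vz}^2 n/m)$" fed into matrix Bernstein is too lossy to survive a union bound over an $e^{Cn\log m}$-size net on only $\beta^4 n\log m$ samples — the $L\cdot s$ term in Bernstein would demand $m\gtrsim n^3$ once $\norm{\vz}$ is small. One must instead reproduce the finer truncation-and-decoupling argument behind Lemma \ref{le:contra}, splitting each summand into a bounded bulk part — handled by a Chernoff-type bound for $\frac1m\sum_j b_j\va_j\va_j^*$ with $0\le b_j\lesssim\beta^2$, whose concentration does not cost an $n$ in the exponent — and a sparse tail part supported on the few indices with $\abs{\va_j^*\vz}$ atypically large, handled by a restricted-isometry bound on that small set of rows, all carried through with $\beta$ in place of $c_1\sqrt{\log m}$. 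A secondary technical point is arranging the illumination estimate and the net radius so that both remain compatible with the sample budget $m\gtrsim\beta^4 n\log m$.
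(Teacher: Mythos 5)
You correctly diagnose the central obstacle — that a matrix Bernstein bound for $\norm{G_1(\vz)-\E G_1(\vz)}$ pointwise in $\vz$, fed into a union bound over an $e^{Cn\log m}$-size net of $\vz$, is too lossy, because the range term $L\sim\beta^2 n/m$ destroys the exponent. But the workaround you propose (a bulk--tail decoupling \`a la the proof of Lemma~\ref{le:contra}) is not what the argument needs, and you leave it as a sketch; this is where the gap lies. The paper's route is simpler and avoids the obstruction entirely: instead of controlling the operator norm of $G_1(\vz)-\E G_1(\vz)$ at a fixed $\vz$, it bounds the scalar quadratic form $\frac1m\sum_j\abs{\va_j^*\vz}^2\abs{\va_j^*\vw}^2\1_{\abs{\va_j^*\vz}\le 2\beta}$ for a fixed pair $(\vz,\vw)$ with $\vw$ a unit vector, and takes a \emph{double} net, over $\dkh{\norm{\vz}\le\delta}$ \emph{and} over the unit sphere of test directions $\vw$. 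The truncated scalar summand satisfies $\normpsi{\abs{\va_j^*\vz}^2\abs{\va_j^*\vw}^2\1}\lesssim\beta^2$ with no dimension dependence at all (the $n$ that poisons the matrix-Bernstein range term has disappeared, because the test vector $\vw$ is fixed), so scalar Bernstein gives failure probability $\exp(-c\,\epsilon^2 m)=\exp(-cC_1^2\,n\log m)$ at deviation $\epsilon\beta^2$ with $\epsilon=C_1\sqrt{n\log m/m}$, which beats the $e^{Cn\log m}$ net at the cost of only enlarging the universal constant $C_1$. No bulk--tail split, no decoupling, and no illumination lemma are required. This ``net over directions'' trick is exactly the mechanism that exchanges the operator-norm range factor $n$ for a polynomial in the net cardinality, and your proposal does not find it.

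Two smaller remarks. First, the paper uses the hard indicator $\1_{\abs{\va_j^*\vz}\le 2\beta}$ rather than a Lipschitz cutoff $\chi$; the discretization error is then controlled by choosing an $m^{-2}$-fine net and observing that, on the event $\max_j\norm{\va_j}\le\sqrt{6m}$, the indicator is identically $1$ for both $\vz$ and its net representative $\vz_0$, so nothing changes across the net. Your smooth cutoff accomplishes the same thing but is overkill. Second, your bias term $\E\bigl[\abs{\va_j^*\vz}^2\abs{\va_j^*\vw}^2\1_{\abs{\va_j^*\vz}>2\beta}\bigr]$ together with the ``every direction is well illuminated'' observation $\min_{\norm{\vu}=1}\max_j\abs{\va_j^*\vu}\gtrsim\sqrt{\log m}$ is a correct and in fact \emph{more careful} treatment of a point the paper glosses over: the paper asserts its corresponding term $I_3$ vanishes, which is literally false (it is exponentially small, not zero). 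Your version of this step is a genuine improvement, though it is tangential to the main missing idea.
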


\begin{proof}
For any unit vector $\vw \in \C^n$ and any $\vz$ obeying \eqref{eq:nearind000}, we have
\[
\frac1m \sum_{j=1}^m \abs{\va_j^* \vz}^2 \abs{\va_j^* \vw}^2=\frac1m \sum_{j=1}^m \abs{\va_j^* \vz}^2  \abs{\va_j^* \vw}^2 \1_{\abs{\va_j^* \vz} \le 2 \beta }.
\]
In what follows, we shall first establish concentration inequalities for the right hand side term for a given $(\vz,\vw)$, and then establish uniform bounds by the standard covering argument. Notice that 
\[
\normpsi{ \abs{\va_j^* \vz}^2  \abs{\va_j^* \vw}^2 \1_{\abs{\va_j^* \vz} \le 2\beta }} \le 4\beta^2\normpsi{ \abs{\va_j^* \vw}^2} \le 4\beta^2 ,
\]
where $\normpsi{\cdot}$ denotes the sub-exponential norm \cite{Vershynin2018}. This further implies that 
\[
\normpsi{ \abs{\va_j^* \vz}^2  \abs{\va_j^* \vw}^2 \1_{\abs{\va_j^* \vz} \le 2\beta} - \E \abs{\va_j^* \vz}^2  \abs{\va_j^* \vw}^2 \1_{\abs{\va_j^* \vz} \le 2\beta} } \le 8\beta^2.
\]
Applying the Bernstein's inequality, we obtain that for any $0\le \epsilon <1$,
\[
\PP\xkh{ \Abs{ \frac1m \sum_{j=1}^m (\abs{\va_j^* \vz}^2  \abs{\va_j^* \vw}^2 \1_{\abs{\va_j^* \vz} \le 2\beta} -\E \abs{\va_j^* \vz}^2  \abs{\va_j^* \vw}^2 \1_{\abs{\va_j^* \vz} \le 2\beta} ) }\ge 8\epsilon \beta^2 } \le 2\exp(-c\epsilon^2 m),
\]
where $c>0$ is some absolute constant. Taking $\epsilon=C_1 \sqrt{\frac{n\log m}{m}}$ for some large enough constant $C_1>0$, we obtain that 
with probability exceeding $1-2\exp(-c C_1^2 n\log m )$, it holds
\begin{equation} \label{eq:conazaw1}
\Abs{ \frac1m \sum_{j=1}^m \xkh{ \abs{\va_j^* \vz}^2  \abs{\va_j^* \vw}^2 \1_{\abs{\va_j^* \vz} \le 2\beta}-\E \abs{\va_j^* \vz}^2  \abs{\va_j^* \vw}^2 \1_{\abs{\va_j^* \vz} \le 2\beta}} } \le 8 \beta^2 C_1 \sqrt{\frac{n\log m}{m}}.
\end{equation}
Next, we intend to show that \eqref{eq:conazaw1} holds uniformly for all unit vectors $\vw \in \C^n$ and all $\vz \in \C^n$ obeying \eqref{eq:nearind01}.
Define $\mathcal{N}_{\vz}$ to be an $\epsilon_1$-net of $\mathcal{B}_{\vz}(\delta):=\dkh{\vz\in \C^n: \norm{\vz} \le \delta}$ and $\mathcal{N}_0$ an $\epsilon_2$-net of the unit sphere $\mathcal{S}_\C^{n-1}$. In view of \cite[Corollary 4.2.13]{Vershynin2018}, we can choose these nets to guarantee that 
\[
\abs{\mathcal{N}_{\vz}} \le \xkh{1+\frac{2\delta}{\epsilon_1}}^{2n} \qquad \mbox{and} \qquad \abs{\mathcal{N}_{0}} \le \xkh{1+\frac{2}{\epsilon_2}}^{2n}.
\]
For any $\vz \in \C^n$ obeying \eqref{eq:nearind01} and \eqref{eq:nearind1} and any $\vw \in \mathcal{S}_\C^{n-1}$, there exist $\vz_0 \in \mathcal{N}_{\vz}$ and $\vw_0 \in \mathcal{N}_0$ satisfying $\norm{\vz-\vz_0}\le \epsilon_1$ and $\norm{\vw-\vw_0}\le \epsilon_2$. Using the triangle inequality, we have
\begin{eqnarray*}
&& \Abs{ \frac1m \sum_{j=1}^m \abs{\va_j^* \vz}^2  \abs{\va_j^* \vw}^2 \1_{\abs{\va^* \vz} \le 2\beta}- \xkh{\abs{\vz^*\vw}^2+\norm{\vz}^2\norm{\vw}^2 }} \\
&\le &  \underbrace{\Abs{ \frac1m \sum_{j=1}^m \xkh{ \abs{\va_j^* \vz_0}^2  \abs{\va_j^* \vw_0}^2 \1_{\abs{\va_j^* \vz_0} \le 2\beta}-  \E \abs{\va_j^* \vz_0}^2  \abs{\va_j^* \vw_0}^2 \1_{\abs{\va_j^* \vz_0} \le 2\beta}} }}_{:=I_1} \\
&& + \underbrace{\Abs{\frac1m \sum_{j=1}^m \abs{\va_j^* \vz}^2  \abs{\va_j^* \vw}^2 \1_{\abs{\va_j^* \vz} \le 2\beta}- \frac1m \sum_{j=1}^m \abs{\va_j^* \vz_0}^2  \abs{\va_j^* \vw_0}^2 \1_{\abs{\va_j^* \vz_0} \le 2\beta} } }_{:=I_2} \\
&&+ \underbrace{\Abs{   \frac1m \sum_{j=1}^m \E \abs{\va_j^* \vz_0}^2  \abs{\va_j^* \vw_0}^2 \1_{\abs{\va_j^* \vz_0} \le 2\beta} -  \xkh{\abs{\vz_0^*\vw_0}^2+\norm{\vz_0}^2\norm{\vw_0}^2 } } }_{:=I_3}\\
&&+\underbrace{\Abs{ \xkh{\abs{\vz^*\vw}^2+\norm{\vz}^2\norm{\vw}^2 }-\xkh{\abs{\vz_0^*\vw_0}^2+\norm{\vz_0}^2\norm{\vw_0}^2 }}}_{:=I_4}.
\end{eqnarray*}
For the term $I_1$, it follows from \eqref{eq:conazaw1} and the union bound that with probability at least $1-  \xkh{1+\frac{2\delta}{\epsilon_1}}^{2n}  \xkh{1+\frac{2}{\epsilon_2}}^{2n} \cdot 2\exp(-c C_1^2 n\log m )$ that we have
\[
I_1 \le 8 C_1 \beta^2 \sqrt{\frac{n\log m}{m}}.
\]
For the second term $I_2$,  taking $\epsilon_1=1 /m^2$, we obtain that with probability at least $1-O(me^{-1.5m}) $ it holds
\[
\max_{1\le j\le m}\abs{\va_j^*(\vz-\vz_0)}  \le \max_{1\le j\le m} \norm{\va_j} \cdot \norm{\vz-\vz_0} \le \sqrt{6m} \epsilon_1\le \beta,
\]
provided $m\ge 6 \beta^{-1}$. Here, we use the fact that when $m\ge n$ it holds, with probability exceeding $1-O(m e^{-1.5 m})$,  it holds $\max_{1\le j\le m} \norm{\va_j} \le \sqrt {6m}$ in the first inequality.  Therefore, 
\[
\max_{1\le j\le m}\abs{\va_j^* \vz_0} \le \max_{1\le j\le m}\abs{\va_j^*(\vz-\vz_0)} +\max_{1\le j\le m}\abs{\va_j^*\vz} \le 2\beta,
\]
where the last inequality comes from the fact $\vz$ obeying \eqref{eq:nearind1}.
As a result, with probability at least $1-O(m^{-10})$, one has the following identity
\begin{equation} \label{eq:idevent1}
\1_{ \abs{\va_j^* \vz} \le 2\beta } = \1_{ \abs{\va_j^* \vz_0} \le 2\beta } =1.
\end{equation}
Applying the triangle inequality, one has
\begin{eqnarray*}
I_2 &= &\Abs{ \frac1m \sum_{j=1}^m  \xkh{ \abs{\va_j^* \vz}^2  \abs{\va_j^* \vw}^2 - \abs{\va_j^* \vz_0}^2  \abs{\va_j^* \vw_0}^2} } \\
&\le & \Abs{ \frac1m \sum_{j=1}^m  \xkh{ \abs{\va_j^* \vz}^2  - \abs{\va_j^* \vz_0}^2}  \abs{\va_j^* \vw}^2}  +\Abs{ \frac1m \sum_{j=1}^m  \abs{\va_j^* \vz_0}^2  \xkh{\abs{\va_j^* \vw}^2 -   \abs{\va_j^* \vw_0}^2} } \\
&\le & \max_{1\le j\le m}~ \Abs{ \abs{\va_j^* \vz}^2  - \abs{\va_j^* \vz_0}^2 } \cdot  \frac1m \sum_{j=1}^m \abs{\va_j^* \vw}^2+ \max_{1\le j\le m}~ \Abs{ \abs{\va_j^* \vw}^2  - \abs{\va_j^* \vw_0}^2 } \cdot  \frac1m \sum_{j=1}^m \abs{\va_j^* \vz_0}^2 \\
&\overset{\text{(i)}}{\le}  & 2\xkh{\abs{\va_j^* \vz}+ \abs{\va_j^* \vz_0}} \Abs{\va_j^*(\vz-\vz_0)} +2\delta^2 \xkh{\abs{\va_j^* \vw}+ \abs{\va_j^* \vw_0}} \Abs{\va_j^*(\vw-\vw_0)}  \\
& \le & \max_{1\le j\le m}~  \norm{\va_j}^2 \cdot  \xkh{2\xkh{\norm{\vz}+\norm{\vz_0}} \norm{\vz-\vz_0} + 2\delta^2 \xkh{\norm{\vw}+\norm{\vw_0}} \norm{\vw-\vw_0}    }  \\
&\overset{\text{(ii)}}{\le}  & 24 \delta m \xkh{\epsilon_1+\delta  \epsilon_2} \\
& \le &  \frac{24\delta}m.
\end{eqnarray*}
Here, (i) arises from the fact that $ \frac1m \sum_{j=1}^m \va_j\va_j^* \le 2$ with probability exceeding $1-2\exp(-c' n)$ and the facts $\norm{\vw}=1, \norm{\vz_0} \le \delta$,  (ii) comes from the fact $\max_{1\le j\le m} \norm{\va_j} \le \sqrt {6m}$ with high probability, and  (iii) follows from by taking $\epsilon=m^{-1}, \epsilon_2=\delta^{-1}m^{-2}$.

For the term $I_3$, recall the identity \eqref{eq:idevent1}, one has
\[
I_3=  \frac1m \sum_{j=1}^m \E \abs{\va_j^* \vz_0}^2  \abs{\va_j^* \vw_0}^2 \1_{\abs{\va_j^* \vz_0} > 2\beta}=0.
\]

Finally, for the term $I_4$, one can apply the triangle inequality to reach
\begin{eqnarray*}
I_4 &\le&  \Abs{\abs{\vz^*\vw}^2-\abs{\vz_0^*\vw}^2 } + \Abs{\abs{\vz_0^*\vw}^2-\abs{\vz_0^*\vw_0}^2 } + \norm{\vw}^2 \Abs{\norm{\vz}^2-\norm{\vz_0}^2}+\norm{\vz_0}^2 \Abs{\norm{\vw}^2-\norm{\vw_0}^2} \\
&\le & 4\delta \xkh{ \epsilon_1+ \delta \epsilon_2} \le \frac{8\delta}{m^2}.
\end{eqnarray*}

Putting all together, we have 
\begin{eqnarray*}
\Abs{ \frac1m \sum_{j=1}^m \abs{\va_j^* \vz}^2  \abs{\va_j^* \vw}^2 \1_{\abs{\va^* \vz} \le 2\beta}- \xkh{\abs{\vz^*\vw}^2+\norm{\vz}^2\norm{\vw}^2 }} &\le&  8 C_1 \beta^2 \sqrt{\frac{n\log m}{m}}+ \frac{32\delta }{m^2} \\
&\le & c_0 \beta^2 \sqrt{\frac{n\log m}{m}}
\end{eqnarray*}
holds with probability at least 
\[
1-  \xkh{1+\frac{2\delta}{\epsilon_1}}^{2n}  \xkh{1+\frac{2}{\epsilon_2}}^{2n} \cdot 2\exp(-c C_1^2 n\log m )-m\exp(-c' m) \ge 1- O(m^{-10}),
\]
provided $m\ge C\max\xkh{\delta, \beta^4 n\log m}$. This completes the proof of \eqref{eq:le551}.

The proof of \eqref{eq:le552} is similar. The only difference is that the random matrix is not Hermitian, and we need to work with
\[
\Abs{  \vu\xkh{ \frac1m \sum_{j=1}^m \xkh{\va_j^* \vz}^2 \va_j\va_j^\T - 2\vz\vz^\T} \vv } 
\]
for unit vectors $\vu, \vv \in \C^n$. So, we omit it.
\end{proof}

\end{document}